\documentclass{amsart}

\usepackage{amssymb}
\usepackage{graphicx}


\newtheorem{theorem}{Theorem}[section]
\newtheorem{lemma}[theorem]{Lemma}
\newtheorem{proposition}[theorem]{Proposition}
\newtheorem{corollary}[theorem]{Corollary}

\theoremstyle{definition}
\newtheorem{definition}[theorem]{Definition}

\theoremstyle{remark}
\newtheorem{remark}[theorem]{Remark}

\numberwithin{equation}{section}

\newcommand{\DF}{\ensuremath{\mathcal{E}}}

\DeclareMathOperator{\dom}{dom}%
\DeclareMathOperator{\dist}{dist}%
\DeclareMathOperator{\sppt}{Sppt}


\def\Tri#1#2#3#4{{ 
\put(#1,#2){\line(3,5){#3}} \put(#1,#2){\line(3,0){#4}} \count223=#1 \advance\count223 by #4
\put(\count223,#2){\line(-3,5){#3}}}}

\def\Spic#1#2#3#4{{
\count205=#2\count206=#2\count207=#2\count208=#2\count209=#2
\count210=#1\count211=#3\count214=#3\count212=#4 \divide\count210 by 2 {\ifnum\count210>0
\Spic{\count210}{#2}{#3}{#4} \multiply\count207 by 3 \multiply\count208 by 6 \multiply\count209 by
5 \multiply\count207 by \count210 \multiply\count208 by \count210 \multiply\count209 by \count210
{\advance\count211 by \count207 \advance\count212 by \count209
\Spic{\count210}{#2}{\count211}{\count212}} {\advance\count214 by \count208
\Spic{\count210}{#2}{\count214}{#4}} \else \multiply\count205 by 3 \multiply\count206 by 6
\Tri{#3}{#4}{\count205}{\count206} \fi }}}


\begin{document}

\title[Smooth bumps and Borel theorem on p.c.f. fracals]{Smooth bumps, a Borel theorem and partitions of smooth functions on p.c.f. fractals.}

\author[Rogers]{Luke G. Rogers}
\address{Department of Mathematics\\University of Connecticut\\Storrs, CT 06269-3009\\U.S.A.}
\email{rogers@math.uconn.edu}

\author[Strichartz]{Robert S. Strichartz}
\address{Department of Mathematics\\Malott Hall\\Cornell University\\Ithaca, NY 14853-4201\\U.S.A.}
\email{str@math.cornell.edu}
\thanks{Research supported in part by the National Science Foundation, Grant DMS-0140194}

\author[Teplyaev]{Alexander Teplyaev }
\address{Department of Mathematics\\University of Connecticut\\Storrs, CT 06269-3009\\U.S.A.}
\email{teplyaev@math.uconn.edu}
\thanks{Research supported in part by the National Science Foundation, Grant DMS-0505622}

\subjclass[2000]{Primary 28A80; Secondary 31C45, 60J60}

\date{}

\begin{abstract}
We provide two methods for constructing smooth bump functions and for smoothly cutting off smooth
functions on fractals, one using a probabilistic approach and sub-Gaussian estimates for the heat
operator, and the other using the analytic theory for p.c.f. fractals and a fixed point argument.
The heat semigroup (probabilistic) method is applicable to a more general class of metric measure
spaces with Laplacian, including certain infinitely ramified fractals, however the cut off
technique involves some loss in smoothness.  From the analytic approach we establish a Borel
theorem for p.c.f.~fractals, showing that to any prescribed jet at a junction point there is a
smooth function with that jet. As a consequence we prove that on p.c.f. fractals smooth functions
may be cut off with no loss of smoothness, and thus can be smoothly decomposed subordinate to an
open cover. The latter result provides a replacement for classical partition of unity arguments in
the p.c.f. fractal setting.
\end{abstract}

\maketitle

\section{Introduction}

Recent years have seen considerable developments in the theory of analysis on certain fractal sets
from both probabilistic and analytic viewpoints \cite{MR1668115,Kigamibook,Strichartzbook}. In this
theory, either a Dirichlet energy form or a diffusion on the fractal is used to construct a weak
Laplacian with respect to an appropriate measure, and thereby to define smooth functions. As a
result the Laplacian eigenfunctions are well understood, but we have little knowledge of other
basic smooth functions except in the case where the fractal is the Sierpinski Gasket
\cite{MR2150975,CalculusII,generalizedefnspaper}. At the same time the existence of a rich
collection of smooth functions is crucial to several aspects of classical analysis, where tools
like smooth partitions of unity, test functions and mollifications are frequently used.  In this
work we give two proofs of the existence of smooth bump functions on fractals, one taking the
probabilistic and the other the analytic approach. The probabilistic result
(Theorem~\ref{Heatbump-heatbumptheorem}) is valid provided the fractal supports a heat operator
with continuous kernel and sub-Gaussian bounds, as is known to be the case for many interesting
examples \cite{MR1668115,MR1023950,MR1701339} that include non-post-critically finite (non-p.c.f.)
fractals such as certain Sierpinski carpets.  In this setting it can also be used to cut off
functions of a certain smoothness class, with some loss of smoothness
(Theorem~\ref{heatbump_smoothedfunctionthm}).  By contrast the analytic method (Theorem
\ref{FixedPoint-smoothbumpexiststheorem}) is applicable to the smaller class of self-similar p.c.f.
fractals with a regular harmonic structure and Dirichlet energy in the sense of Kigami
\cite{Kigamibook}.

For p.c.f. fractals we use our result on the existence of bump functions to prove a Borel-type
theorem, showing that there are compactly supported smooth functions with prescribed jet at a
junction point (Theorem \ref{Borel-borelthm}).  This gives a very general answer to a question
raised in \cite{MR2150975,CalculusII}, and previously solved only for the Sierpinski Gasket
\cite{generalizedefnspaper}.  We remark, however, that even in this special case the results of
\cite{generalizedefnspaper} neither contain nor are contained in the theorem proven here, as the
functions in \cite{generalizedefnspaper} do not have compact support, while those here do not deal
with the tangential derivatives at a junction point.

Finally we apply our Borel theorem to the problem of partitioning smooth functions.  Multiplication
does not generally preserve smoothness in the fractal setting \cite{MR1707752}, so the usual
partition of unity method is not available.  As a substitute for this classical tool we show that a
smooth function can be partitioned into smooth pieces with supports subordinate to a given open
cover (Theorem \ref{Partition-partitionthm}).  This result plays a major role in a forthcoming
paper on a theory of distributions for p.c.f. self-similar fractals \cite{distribspaper}.


\subsection*{Setting}

Let $X$ be a self-similar subset of $\mathbb{R}^{d}$ (or more generally any complete metric space)
in the sense that there is a finite collection of contractive similarities $\{F_{j}\}_{j=1}^{N}$ of
the space and $X$ is the unique compact set satisfying $X=\cup_{j=1}^{N}F_{j}(X)$.  The sets
$F_{j}(X)$ are the $1$-cells, and for a word $w=(w_{1},w_{2},\dotsc ,w_{m})$ of length $m$ we
define $F_{w}=F_{w_{1}}\circ\dotsm\circ F_{w_{m}}$ and call $F_{w}(X)$ an $m$-cell.  If $w$ is an
infinite word then we define $[w]_{m}$ to be its length $m$ truncation and let
$F_{w}(X)=\cap_{m}F_{[w]_{m}}(X)$, which is clearly a point in $X$. The map from infinite words to
$X$ is surjective but not injective, and the points of non-injectivity play an important role in
understanding the connectivity properties of the fractal (see Section 1.6 of \cite{Kigamibook}). In
particular there are critical points of the cover by the $F_{j}$, namely those $x$ and $y$ for
which there are $j\neq k$ in $\{1,\dotsc,N\}$ such that $F_{j}(x)=F_{k}(y)$ (so $F_{j}(x)$ is a
critical value). We call an infinite word $w$ critical if $F_{w}(X)$ is a critical value, and then
call $\tilde{w}$ post-critical if there is $j\in\{1,\dotsc,N\}$ such that $jw$ is critical.  The
boundary $\partial X$ of $X$ consists of all points $F_{\tilde{w}}(X)$ for which $\tilde{w}$ is
post-critical.  In the case that the set of post-critical words is finite the fractal is called
post-critically finite (p.c.f.) and we also use the notations $V_{0}=\partial X$ and
$V_{m}=\cup_{w}F_{w}(V_{0})$, where the union is over all words of length $m$.  The points in
$(\cup_{m}V_{m})\setminus V_{0}$ are called {\em junction points}. We shall always assume that
$V_{0}$ contains at least two elements.

We suppose that $X$ comes equipped with a self-similar probability measure $\mu$, meaning that
there are $\mu_{1},\dotsc,\mu_{N}$ such that the cell corresponding to $w=(w_{1},\dotsc,w_{m})$ has
measure $\mu(F_{w}(X))=\prod_{j=1}^{m}\mu_{w_{j}}$. In order to do analysis on $X$ we assume that
$X$ admits a Dirichlet form \DF, so \DF\ is a closed quadratic form on $L^{2}(\mu)$ with the
(Markov) property that if $u\in\dom(\DF)$ then so is $\tilde{u}=u\chi_{0<u<1} +\chi_{u\geq1}$ and
$\DF(\tilde{u},\tilde{u})\leq\DF(u,u)$, where $\chi_{A}$ is the characteristic function of $A$.  We
will work only with self-similar Dirichlet forms, having the property that
\begin{equation}\label{Setting-DFisselfsimilar}
    \DF(u,v) = \sum_{\text{$m$-words w}} r_{w}^{-1} \DF(u\circ F_{w},v\circ F_{w})
    \end{equation}
where the factors $r_{j}$ are called resistance renormalization factors and as usual
$r_{w}=r_{w_{1}}\dotsm r_{w_{m}}$. For convenience we restrict to the case of regular harmonic
structures, in which $0<r_{j}<1$ for all $j$.  In addition we assume $\DF$ has the property that
$C(X)\cap\dom(\DF)$ is dense both in $\dom(\DF)$ with \DF-norm and in the space of continuous
functions $C(X)$ with supremum norm. We often refer to \DF\ as the energy. If $X$ is a nested
fractal in the sense of Lindstr\o m \cite{MR988082} then such a Dirichlet form may be constructed
using a diffusion or a harmonic structure \cite{MR1442498,MR1301625,MR1474807}.  Other approaches
may be found in \cite{MR1769995,MR2017320,MR2105771,MR2032945}.

Using the energy and measure we produce a weak Laplacian by defining $f=\Delta u$ if $\DF(u,v)=
-\int fv\,d\mu$ for all $v\in\dom(\DF)$ that vanish on $\partial X$.  Then $-\Delta$ is a
non-negative self-adjoint operator on $L^{2}(\mu)$.  When $\Delta u\in C(X)$ we write
$u\in\dom(\Delta)$; this notation is continued inductively to define $\dom(\Delta^{k})$ for each
$k$ and then $\dom(\Delta^{\infty})=\cap_{k} \dom(\Delta^{k})$. We say $f$ is smooth if
$f\in\dom(\Delta^{\infty})$.  On a p.c.f. fractal the weak Laplacian admits an additional pointwise
description in which $\DF$ is a renormalized limit of energies $\DF_{m}$ corresponding to the
finite graph approximations $V_{m}$ and the Laplacian $\Delta$ is a renormalized limit of the
associated graph Laplacians $\Delta_{m}$.  Details are in Section 3.7 of \cite{Kigamibook}.

By standard results, existence of the Dirichlet form \DF\ implies existence of a strongly
continuous semigroup $\{P_{t}\}$ with generator $-\Delta$. Conversely if there is such a semigroup
and it is self-adjoint then there is a corresponding Dirichlet form, so we could equally well begin
with $\{P_{t}\}$ and construct \DF\ (see \cite{MR990239,MR1303354}). The Markov property of \DF\
ensures that if $0\leq u\leq 1$ $\mu$-a.e. then also $0\leq P_{t}u\leq 1$ $\mu$-a.e.

If $X$ is p.c.f. then there is a definition of boundary normal derivatives of a function in
$\dom(\Delta)$ and a Gauss-Green formula relating these to the integral of the Laplacian on $X$.
The usual definition uses resistance-renormalized limits of the terms of the graph Laplacian that
exist at the boundary point.  If $q_{i}$ is the boundary point of $X$ that is fixed by $F_{i}$ and
$r_{i}$ is the resistance factor corresponding to $F_{i}$ we may define a normal derivative
$\partial_{n}$ at $q_{i}$ and have a Gauss-Green formula as in Section 3.7 of \cite{Kigamibook} by
\begin{gather}
    \partial_{n} u(q_{i}) = - \lim_{m\rightarrow\infty} r_{i}^{-m}  \Delta_{m,q_{i}}u(q_{i})
     \label{Introduction-definitionofnormalderiv}\\
    \sum_{q\in\partial X} \bigl( v(q)\partial_{n}u(q) -  u(q)\partial_{n}v(q) \bigr)
    = \int_{X} (v\Delta u- u\Delta v) d\mu \label{Introduction-thegaussgreenformula}
    \end{gather}
where in \eqref{Introduction-definitionofnormalderiv} the quantity $\Delta_{m,q_{i}}u(q_{i})$ is
the graph Laplacian at $q_{i}$.  (We distinguish this from the graph Laplacian at interior points
because it may involve a sum over fewer edges -- for example in the case of the Sierpinski gasket
the graph Laplacian at interior vertices has four terms, but at boundary points there are only
two.)

Normal derivatives may also be localized to cells, so that $\partial_{n,F_{w}(X)}u(F_{w}(q_{i}))$
is given by the limit in \eqref{Introduction-definitionofnormalderiv} but with
$\Delta_{m,w}u(F_{w}(q_{i}))$ denoting the graph Laplacian restricted to to edges that lie inside
$F_{w}(X)$. It is then easy to see that if $\Delta u$ exists and is continuous on each of finitely
many cells that meet at $F_{w}(q_{i})$, then it is continuous on their union if and only if the
following conditions hold: $u$ is continuous, $\Delta u$ has a unique limit at $F_{w}(q_{i})$, and
the normal derivatives at $F_{w}(q_{i})$ sum to zero. We call these the {\em matching conditions}
for the Laplacian.

We shall have need of two other pieces of information about a p.c.f. fractal with regular harmonic
structure.  The first is that there is a Green's function $g(x,y)\geq0$ that is continuous on
$X\times X$ and has self-similar structure related to the discrete Greens function $\Psi(x,y)$ on
$\bigl(V_{1}\setminus V_{0}\bigr)^2$.  According to Section 3.5 of \cite{Kigamibook}
\begin{equation}\label{Introduction-Defnofgreensfn}
    g(x,y) = \lim_{m\rightarrow\infty} \sum_{k=0}^{m-1} \sum_{w\in W_{k}} r_{w}
        \Psi \bigl( F_{w}^{-1}(x),F_{w}^{-1}(y) \bigr)
    \end{equation}
where $W_{k}$ is the collection of words of length $k$ but the sum is only over those $w$ such that
$F_{w}^{-1}(x)$ and $F_{w}^{-1}(y)$ make sense.  Integration against (the negative of) this Green's
function gives the Green's operator $Gf(x)=-\int g(x,y)f(y)d\mu(y)$ and solves $\Delta Gf=f$ with
Dirichlet boundary conditions.  In particular we will make use of pointwise estimates of $g(x,y)$
that follow from \eqref{Introduction-Defnofgreensfn}.  The second thing we need to know is an
estimate on the oscillation of a harmonic function on a cell $F_{w}(X)$, details of which are in
Section 3.2 and Appendix A of \cite{Kigamibook}. A harmonic function $h(x)$ is determined by its
values on the boundary $V_{0}$, and its values may be computed using harmonic extension matrices
$A_{i}$ via $h|_{F_{w}(V_{0})}=A_{w_{m}}\dotsm A_{w_{1}} \cdot h|_{V_{0}}$.  The $A_{i}$ are
positive definite, have eigenvalue $1$ on the constant function and second eigenvalue at most
$r_{i}$.  It follows immediately that the oscillation of $A_{i}h|{V_{0}}$ is at most $r_{i}$ when
the oscillation of $h|_{V_{0}}$ is bounded by $1$, and similarly that the oscillation of
$h|_{F_{w}(X)}$ is at most $r_{w}$.

More details about analysis on self-similar p.c.f. fractals may be found in \cite{Kigamibook}, or
\cite{Strichartzbook} in the special case of the Sierpinski Gasket.  The lecture notes of Barlow
\cite{MR1668115} cover the probabilistic approach that begins with a diffusion semigroup;
non-p.c.f. examples include Sierpinski carpets \cite{MR1023950,MR1701339}. Some of the general
theory connecting Dirichlet forms, heat semigroups and spectral theory of the Laplacian is covered
in \cite{MR990239,MR1303354}.

\subsection*{Smooth bump functions}
In classical analysis on Euclidean spaces the usual bump functions to consider are of the form
$u\in C^{\infty}$ with support in a specified open set $\Omega$, bounds $0\leq u\leq1$ and the
property $u\equiv1$ on a specified compact $K\subset\Omega$.  In the fractal setting described
above it is not usually the case that a product of smooth functions is itself smooth (see
\cite{MR1707752}, or Section \ref{Partition-section} below), so there is less practical benefit to
asking that our bump functions be identically $1$ on $K$ and we will not always do so. Nor is it
always essential that $0\leq u\leq 1$, though this is sometimes useful. For this reason we will use
the term {\em smooth bump function} to mean a function $u\in\dom(\Delta^{\infty})$ with support in
a specified set $\Omega$ and a bound $|u-1|\leq\epsilon$ on a specified compact $K\subset\Omega$.

Suppose $X$ is a p.c.f. self-similar fractal and we have a function $u\in\dom(\Delta^{\infty})$
with $|u-1|\leq \epsilon$ on $K\subset X$ and such that $\Delta^{k}u$ and $\partial_{n}\Delta^{k}u$
vanish on $V_{0}$ for all $k$.  Then for any word $w$ we see that
\begin{equation*}
     u_{w} = \begin{cases}
         u\circ F_{w}^{-1} &\text{on the cell $F_{w}(X)$}\\
         0 &\text{elsewhere}
         \end{cases}
     \end{equation*}
is a smooth bump function with support in $F_{w}(X)$ by the matching conditions for the Laplacian.
For this reason we also use the term {\em smooth bump function} to refer to
$u\in\dom(\Delta^{\infty})$ on $X$ with $|u-1|\leq \epsilon$ on $K$ and which {\em vanishes to
infinite order} at all points $q\in V_{0}$, by which last phrase we mean
$\Delta^{k}u(q)=\partial_{n}\Delta^{k}u(q)=0$ for all $k$.


\section{A smooth bump from the heat operator}\label{Heatbump-section}
In this section $(X,\dist)$ is a metric space with Borel measure $\mu$ and a self-adjoint Neumann
Laplacian $\Delta$.  Our main result here is a procedure for using the heat flow on $X$ to cut-off
a smooth function with only a small loss in smoothness.  For this purpose we require two
assumptions on $\Delta$.

The first assumption is that $\Delta$ has a positive spectral gap, in the sense that there is
$\lambda>0$ such that the spectrum of $\Delta$ is contained in $\{0\}\cup[\lambda,\infty)$. This
implies the estimate $\|P_{t}-I\|_{2,2}\leq \min\{\lambda t,2\}$, where $\|\cdot\|_{2,2}$ refers to
the operator norm on $L^{2}$, $P_{t}$ is the heat operator at time $t$ and $I$ is the identity. For
future use we also note that the spectral representation $P_{t} = \int_{0}^{\infty} e^{-xt}
dE_{\Delta}(x)$ implies there is an estimate
\begin{equation}\label{spectralrepestimate}
    \|\Delta^{k}P_{t}\|_{2,2}\leq c_{k} t^{-k}.
    \end{equation}

For the second assumption, define
\begin{equation*}
    (D(t,d))^{2}
    = \sup \Biggl\{ \int_{L} \bigl| P_{t}f \bigr|^{2} \, :\, \text{$\|f\|_{2}^{2}=1$ and $L\subset
    X$ with $\dist\bigl(\sppt(f), L\bigr)\geq d$} \Biggr\},
    \end{equation*}
so that for any set $L$ and any $f\in L^{2}$ with $\dist\bigl(\sppt(f), L\bigr)\geq d$ we have
\begin{equation}\label{heatbump_heatoperatorestimate}
    \biggl( \int_{L} \bigl| P_{t}f \bigr|^{2}\biggr)^{1/2} \leq D(t,d) \|f\|_{2}.
    \end{equation}
Our second assumption is that there is $\epsilon_{0}$ so that for any $0<\epsilon<\epsilon_{0}$
there is a decreasing sequence $t_{j}$ with all $t_{j}<\frac{2}{\lambda}$ and
$\sum_{j}t_{j}=T<\infty$, and such that for every non-negative integer $k$,
\begin{equation}\label{heatbump_mainassumptiononheatkernel}
    \sum_{j=1}^{\infty} t_{j+1}^{-k} D(t_{j},\epsilon2^{-j}) =C_{k}<\infty.
    \end{equation}

It is worth noting that if this condition is satisfied then we may make $T>0$ as small as we like,
because $D(t,d_{1})\leq D(t,d_{2})$ if $d_{1}\geq d_{2}$ and therefore
\begin{align*}
    \sum_{j=1}^{\infty} t_{j+m+1}^{-k} D(t_{j+m},\epsilon2^{-j})
    &\leq \sum_{j=1}^{\infty} t_{j+m+1}^{-k} D(t_{j+m},\epsilon2^{-(j+m)})\\
    &=\sum_{j'=m}^{\infty} t_{j'+1}^{-k} D(t_{j'},\epsilon2^{-j'})
    \leq C_{k}
    \end{align*}
so we may simply take $m$ so that $\sum_{1}^{\infty} t_{j+m}$ is as small as desired and use the
sequence $t'_{j}=t_{j+m}$.  At the same time, this allows us to reduce a finite number of the
$C_{k}$ to be as small as we desire.  We suppose $C_{0}\leq \frac{1}{2}$.

The second assumption is given in this somewhat artificial form so as to emphasize the estimates
needed in the proof.  It is a non-trivial assumption, but is known to be true in many examples. For
instance, if $P_{t}f$ is given by integration against a kernel $p(t,x,y)$ that satisfies a
sub-Gaussian upper bound
\begin{equation}\label{HeatBump-subGaussianbound}
    p(t,x,y)
    \leq \frac{\gamma_{1}}{t^{\alpha/\beta}}
    \exp\, \biggl( -\gamma_{2} \Bigl( \frac{\dist(x,y)^{\beta}}{t} \Bigr)^{1/(\beta-1)}
    \biggr)
    \end{equation}
then \eqref{heatbump_mainassumptiononheatkernel} may readily verified for sequences $t_{j}$ that
decrease sufficiently rapidly, for example $t_{j}=j^{-j}$ or $t_{j}=e^{-j^{2}}$.  Heat kernel
bounds like \eqref{HeatBump-subGaussianbound} have been the subject of a great deal of research,
not only on fractals but also on manifolds and graphs (see
\cite{MR1098839,MR1150597,MR1853353,MR1938457} and the references therein). Here we satisfy
ourselves with noting that they are known for many interesting examples, including various p.c.f.
fractals \cite{MR1442498,MR1301625,MR1474807,MR1665249} and certain highly symmetric generalized
Sierpinski carpets (which are not p.c.f.) \cite{MR1023950,MR1701339}. They are also valid on
products of some fractals \cite{MR2095624}.  Later we shall see that requiring $p(t,x,y)$ be
continuous and satisfy~\eqref{HeatBump-subGaussianbound} is an appropriate choice when using our
cutoff procedure to produce a smooth function.

\begin{definition}
The space of heat-smoothed functions $\mathcal{A}(X)$ on $X$ is
\begin{equation*}
    \mathcal{A}(X) = \bigcup_{T>0} P_{T}\bigl( L^{2}(X) \bigr)
    \end{equation*}
Functions in $\mathcal{A}$ are characterized by the property that when expanded with respect to an
orthonormal basis of Laplacian eigenfunctions, the coefficients decay exponentially with respect to
the sequence of eigenvalues. In particular $\mathcal{A}(X)$ contains the eigenfunctions and is thus
dense in $L^{2}(X)$.
\end{definition}

\begin{theorem}\label{heatbump_smoothingheatsmoothed}
Let $\phi\in\mathcal{A}(X)$ be $\phi=P_{T}f$ for some $f\in L^{2}(X)$ with $\|f\|_{2}\leq 1$, and
$T>0$. Let $K$ be a fixed compact set. If $0<\epsilon<\epsilon_{0}$ then there is a function $v$
that is equal to $\phi$ on $K$, that vanishes outside the $\epsilon$-neighborhood of $K$, and has
$\Delta^{k}v\in L^{2}$ for all $k$ with $\|\Delta^{k}v\|_{2}\leq
7c_{1}c_{k}C_{k}t_{1}^{-k}e^{2\lambda T}$.
\end{theorem}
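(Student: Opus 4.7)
My plan is to construct $v$ as the limit of an iterative scheme in which each step alternates smoothing by the heat operator $P_{t_n}$ with truncation by the characteristic function of a neighborhood of $K$ that shrinks toward the $\epsilon$-neighborhood. Concretely, introduce nested closed sets $K_n = \{x : \dist(x,K) \leq \epsilon(1-2^{-n})\}$, so $K_0 = K$ and the $K_n$ exhaust the open $\epsilon$-neighborhood of $K$, and let $\chi_n$ be their characteristic functions. Starting from $v_0 = \phi$, at stage $n$ subtract off a $P_{t_n}$-smoothed version of the tail $(1-\chi_n)v_{n-1}$, so that $v_n$ has nearly all its mass in $K_n$ while remaining in $\dom(\Delta^\infty)$ thanks to the smoothing by $P_{t_n}$.

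The key estimates come from three ingredients. First, because the tail $(1-\chi_n)v_{n-1}$ is supported at distance at least $\epsilon 2^{-n}$ from $K$, the bound \eqref{heatbump_heatoperatorestimate} with $d = \epsilon 2^{-n}$ controls how much the subtraction perturbs values on $K$; summing over $n$ using hypothesis \eqref{heatbump_mainassumptiononheatkernel} shows the total perturbation on $K$ is finite and can be arranged to vanish. Second, the bound $\|\Delta^k P_t\|_{2,2} \leq c_k t^{-k}$ from \eqref{spectralrepestimate} gives smoothness of each correction: the factor $c_k t_1^{-k}$ in the stated bound comes from the first (largest) smoothing applied to $\phi = P_T f$, while the sum $\sum_n t_{n+1}^{-k} D(t_n, \epsilon 2^{-n}) = C_k$ accounts for the cumulative $\Delta^k$-cost of all subsequent corrections. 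Third, the spectral-gap estimate $\|P_{t_n}-I\|_{2,2} \leq \lambda t_n$ provides $L^2$-stability of $v_n$ across each step; accumulating over all stages yields a factor $\prod_n(1+\lambda t_n) \leq e^{\lambda T}$, and combining the analogous amplifications that appear in two places in the bookkeeping produces the $e^{2\lambda T}$ in the conclusion. The constant $7$ should absorb the loss from the reduction $C_0 \leq \frac{1}{2}$.

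The main obstacle is calibrating the iteration so that, in the limit, $v$ equals $\phi$ \emph{exactly} on $K$ and vanishes \emph{exactly} outside the $\epsilon$-neighborhood, not merely approximately. Because each application of $P_{t_n}$ redistributes mass over all of $X$ while each multiplication by $\chi_n$ breaks smoothness, a naive composition cannot achieve exact support together with exact agreement; some additional device is needed, presumably either a terminal cut-and-redefine on the annular region between $K$ and $\partial(\epsilon\text{-nbhd}(K))$, or an exploitation of the explicit representation $\phi = P_T f$ to split the total smoothing $P_T$ across the iteration so that the exact conditions drop out of telescoping. The secondary difficulty is tracking $\|\Delta^k v_n\|_2$ through the characteristic-function multiplications, which do not preserve $\dom(\Delta^k)$; the carefully summable form of hypothesis \eqref{heatbump_mainassumptiononheatkernel} is designed precisely to make this bookkeeping close.
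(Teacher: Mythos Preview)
Your ingredients are correct and your diagnosis of the obstacle is exactly right, but the proposal stops short of a construction that resolves it; what the paper does is essentially a fusion of both of your suggested devices, applied at \emph{every} stage rather than terminally.

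The key structural differences from your scheme are these. First, the iteration starts not from $\phi$ but from the unsmoothed $u_{1}=f\chi_{K_{1}}$, and the total heat time $T=\sum_{j}t_{j}$ is distributed over the steps; this is your second suggestion. Second, at each stage the paper uses a three-region decomposition: a neighborhood $K_{j}=\{x:\dist(x,K)<\epsilon 2^{-j}\}$ \emph{shrinking} to $K$ (your $K_{n}$ expand outward, which is the wrong direction for this purpose), a region $L_{j}=\{x:\dist(x,K)>\epsilon(1-2^{-j})\}$ shrinking to the complement of the $\epsilon$-neighborhood, and the annulus $A_{j}$ between them. After smoothing $v_{j}=P_{t_{j}}u_{j-1}$, one \emph{redefines} $u_{j}$ to equal $P_{T_{j}}f$ exactly on $K_{j}$, to equal $0$ exactly on $L_{j}$, and to agree with $v_{j}$ on $A_{j}$. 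Since $T_{j}\to T$, on $K$ one has $u_{j}=P_{T_{j}}f\to\phi$ exactly, and the $L_{j}$ cutoff forces exact vanishing outside the $\epsilon$-neighborhood.

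The point you are missing is that one should not try to keep the iterates smooth throughout. Your scheme $v_{n}\mapsto v_{n}-P_{t_{n}}\bigl((1-\chi_{n})v_{n-1}\bigr)$ preserves smoothness at each step but, because $P_{t_{n}}$ has full support, can never achieve exact equality on $K$ or exact vanishing outside. The paper instead runs two sequences: the $u_{j}$ are discontinuous across $\partial K_{j}\cup\partial L_{j}$ but carry the exact boundary behavior, while the $v_{j}=P_{t_{j}}u_{j-1}$ are smooth. One then shows (Lemma~\ref{HeatBump-ujvjhavesamelimitlemma}) that $\|u_{j}-v_{j}\|_{2}\lesssim D(t_{j},\epsilon 2^{-j})\to 0$, so both sequences have the same $L^{2}$ limit $v$, which inherits smoothness from the $v_{j}$ and localization from the $u_{j}$. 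The estimate \eqref{heatbump_heatoperatorestimate} enters not to bound a perturbation on $K$ as you propose, but to bound $\|(u_{j}-v_{j})\chi_{K_{j}}\|_{2}$ and $\|(u_{j}-v_{j})\chi_{L_{j}}\|_{2}$: the redefinition on $K_{j}$ differs from $v_{j}$ by the image under $P_{t_{j}}$ of something supported on $A_{j-1}\cup L_{j-1}$, hence at distance $\geq\epsilon 2^{-j}$ from $K_{j}$.
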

On a first reading of the proof, we suggest thinking of the case where $\phi\equiv1$ and $X$ has
finite measure, so that $u_{j}\equiv1$ on $K_{j}$ at each step.  In this case the following
heuristic may be helpful.

Our goal is a function with two properties, the first of which is $L^{2}$-smoothness, meaning that
$\Delta^{k}v\in L^{2}$ for all $k$, and the second is the property of being $\equiv1$ on $K$ and
$\equiv0$ outside the $\epsilon$-neighborhood of $K$, which we call the {\em characteristic}
property. Beginning with a function $u_{1}$ which has the characteristic property but is not
$L^{2}$-smooth, we recursively apply a two step method. The first step smoothes $u_{1}$ by applying
the heat operator for a small time $t_{2}$ to obtain $v_{2}=P_{t_{2}}u_{1}$, which is smooth but
does not have the characteristic property.  The second step splits $X$ into a neighborhood $K_{2}$
of $K$, and the complement $L_{2}$ of a larger neighborhood, as well as the region $A_{2}$ between,
and sets $u_{2}\equiv1$ on $K_{2}$, $u_{2}\equiv0$ on $L_{2}$, and $u_{2}\equiv v_{2}$ on $A_{2}$.

What we have gained in passing from $u_{1}$ to $u_{2}$ is replacing the original abrupt drop of the
characteristic function with the improved piece on $A_{2}$, as illustrated on Figure
\ref{HeatBump-construcitonpic}. This argument is repeated inductively on nested annulus-like
regions $A_{j}$, each time applying the heat kernel for a shorter time $t_{j}$ to get
$v_{j}=P_{t_{j}}u_{j-1}$ and then cutting $v_{j}$ off to be constant outside $A_{j}$ to get
$u_{j}$.

It is unsurprising that this process converges in $L^{2}$. What is perhaps unexpected is that
$\Delta^{k}v$ converges in $L^{2}$ for all $k$, and this is where the
estimate~\eqref{heatbump_mainassumptiononheatkernel} is crucial. Essentially what is going on is
that the ``steepness'' of the $(j+1)$-th interpolant depends both on the height it must interpolate
and the ``width scale'' on which it interpolates. The ``width scale'' depends on $t_{j+1}$ through
the norm $\|\Delta^{k}P_{t_{j+1}}\|_{2,2} \leq c_{k}t_{j+1}^{-k}$ for the ``steepness'' measured by
$\Delta^{k}$, but the height to be interpolated depends instead on how much $P_{t_{j}}$ changed the
function during the smoothing step, which is controlled by $D(t_{j},\epsilon2^{-j})$ because that
is how much $L^{2}$-norm can ``leak'' across the annulus $A_{j}$ when we apply the heat flow.  The
series of terms $\Delta^{k}(v_{j+1}-v_{j})$ is therefore essentially just
$t_{j+1}^{-k}D(t_{j},\epsilon2^{-j})$, which converges
by~\eqref{heatbump_mainassumptiononheatkernel}.

\begin{figure}[htb]
\centerline{
\begin{picture}(200,170)(0,-10)\small%
\put(-20,115){\line(1,0){240}}%
\put(97,130){$u_{1}$}%
{\thicklines \put(-5,115){\line(1,0){25}} \put(20,150){\line(1,0){160}}
\put(20,115){\line(0,1){35}}\put(180,115){\line(0,1){35}} \put(180,115){\line(1,0){25}}}
\put(-20,55){\line(1,0){240}}%
\put(97,70){$v_{2}$}%
{\thicklines \put(0,60){ \qbezier(-1,-4)(15,-4)(23,14) \qbezier(40,30)(30,30)(23,14)}%
\put(200,60){ \qbezier(1,-4)(-15,-4)(-23,14) \qbezier(-40,30)(-30,30)(-23,14)}%
\put(40,90){\qbezier(0,0)(60,4)(120,0)}}%
%
\put(-20,-5){\line(1,0){240}}%
\put(97,10){$u_{2}$}%
{\thicklines \put(12,-5){\qbezier(0,0)(6,0)(10,19) \qbezier(20,35)(16,35)(10,19)}%
\put(188,-5){ \qbezier(0,0)(-6,0)(-10,19) \qbezier(-20,35)(-16,35)(-10,19)}%
\put(32,30){\line(1,0){136}} \put(-7,-5){\line(1,0){19}} \put(188,-5){\line(1,0){19}}}
\end{picture}}
\caption{Initial steps of the proof of Theorem~\protect{\ref{heatbump_smoothingheatsmoothed}} for a constant function.}%
\label{HeatBump-construcitonpic}
\end{figure}
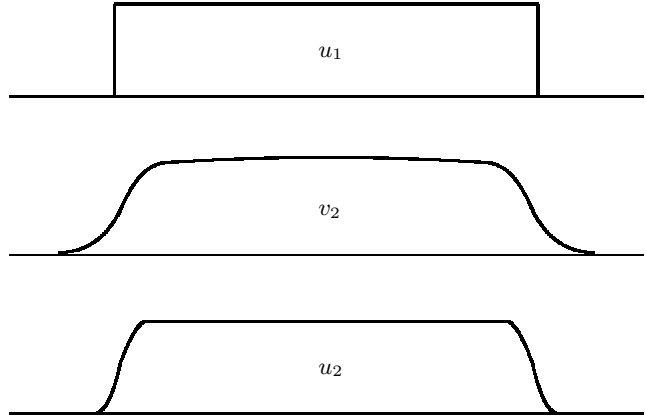
\begin{proof}
Our assumptions imply that we can choose a sequence $t_{j}$ with $\sum_{j}t_{j}=T$ and all
$t_{j}<\frac{2}{\lambda}$ such that~\eqref{heatbump_mainassumptiononheatkernel} holds.

Define neighborhoods $K_{j}$ shrinking dyadically to $K$, neighborhoods $L_{j}$ shrinking
dyadically to the complement of the $\epsilon$-neighborhood of $K$, and annular regions $A_{j}$
between them as follows:
\begin{align*}
    K_{j} &= \{ x: \dist(x,K) < \epsilon 2^{-j} \},\\
    L_{j} &= \{ x: \dist(x,K) > \epsilon (1-2^{-j}) \},\\
    A_{j} &= \{ x: \epsilon 2^{-j}\leq \dist(x,K) \leq \epsilon (1-2^{-j}) \} = X\setminus
    (K_{j}\cup L_{j}).
    \end{align*}
Using these regions, and writing $T_{j}=\sum_{2}^{j}t_{i}$, inductively define for $j\geq 2$
\begin{align*}
    u_{1}&= f \chi_{K_{1}} \\
    v_{j}&= P_{t_{j}} u_{j-1}\\
    u_{j}(x)&= \begin{cases}
            P_{T_{j}}f   &\quad x\in K_{j}\\
            v_{j}(x)&\quad x\in A_{j}\\
            0&\quad x\in L_{j}
            \end{cases}
    \end{align*}
where $\chi_{K_{1}}$ is the characteristic function of the set $K_{1}$.  It will be notationally
convenient to set $t_{1}=0$ and $v_{1}\equiv0$, so that the formula for $u_{j}$ is valid for $j=1$.

From Lemma~\ref{heatbump_seqsareCauchy} we see that $v_{j}$ converges to a function $v$ such that
$\Delta^{k}v\in L^{2}$ for all $k$.  However $u_{j}$ and $v_{j}$ have the same limit by
Lemma~\ref{HeatBump-ujvjhavesamelimitlemma}.  Since $u_{j}\equiv0$ outside the
$\epsilon$-neighborhood of $K$ the same is true of $v$.  Moreover, on $K$ we know that
\begin{equation*}
    u_{j}=P_{T_{j}}f \rightarrow \phi
    \end{equation*}
and we conclude that the function $v$ has the asserted properties.  The estimate for $\Delta^{k}v$
then follows immediately from~\eqref{heatbump_estimateforDeltakofsmoothed} and the
estimate~\eqref{spectralrepestimate}, because the latter gives
\begin{equation*}
    \bigl\| \Delta^{k}v_{2} \bigr\|_{2}
    = \bigl\| P_{t_{1}} u_{1} \bigr\|_{2}
    \leq c_{1}t_{1}^{-k} \|u_{1}\|
    \leq c_{1}t_{1}^{-k}.
    \end{equation*}
\end{proof}

\begin{lemma}\label{heatbump_differenceofvjandujonKj} For $j\geq2$,
\begin{equation*}
    \bigl\| (u_{j}-v_{j})\chi_{K_{j}} \bigr\|_{2}
    \leq D(t_{j},\epsilon 2^{-j}) \bigl( 1 + \|v_{j-1} \|_{2} \bigr).
    \end{equation*}
\end{lemma}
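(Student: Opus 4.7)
The plan is to work on the set $K_j$, use the semigroup identity $P_{T_j} = P_{t_j} P_{T_{j-1}}$ to factor out one application of $P_{t_j}$, and then invoke the key heat-operator estimate~\eqref{heatbump_heatoperatorestimate}. On $K_j$, the definition of $u_j$ gives $u_j = P_{T_j}f = P_{t_j}\bigl(P_{T_{j-1}}f\bigr)$, while $v_j = P_{t_j}u_{j-1}$ by definition, so
\begin{equation*}
    (u_j - v_j)\chi_{K_j} = \bigl( P_{t_j} g \bigr) \chi_{K_j}, \qquad \text{where } g = P_{T_{j-1}}f - u_{j-1}.
    \end{equation*}

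The next step is to locate the support of $g$ and measure its distance from $K_j$. Since $u_{j-1} = P_{T_{j-1}}f$ on $K_{j-1}$, the function $g$ vanishes on $K_{j-1}$, hence $\sppt(g)\subset X\setminus K_{j-1}$. Because $\epsilon 2^{-j} < \epsilon 2^{-(j-1)}$ we have $K_j\subset K_{j-1}$, and every point of $K_j$ is at distance less than $\epsilon 2^{-j}$ from $K$ whereas every point of $X\setminus K_{j-1}$ is at distance at least $\epsilon 2^{-(j-1)}$ from $K$. The triangle inequality gives $\dist(\sppt(g), K_j)\geq \epsilon 2^{-(j-1)} - \epsilon 2^{-j} = \epsilon 2^{-j}$, so assumption~\eqref{heatbump_heatoperatorestimate} applied with $L = K_j$ and $t = t_j$ yields
\begin{equation*}
    \bigl\| (u_j - v_j)\chi_{K_j} \bigr\|_2 \leq D(t_j, \epsilon 2^{-j}) \, \|g\|_2.
    \end{equation*}

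It remains to show $\|g\|_2 \leq 1 + \|v_{j-1}\|_2$. Outside $K_{j-1}$ the definition of $u_{j-1}$ reads $v_{j-1}$ on $A_{j-1}$ and $0$ on $L_{j-1}$, so $u_{j-1}\chi_{X\setminus K_{j-1}} = v_{j-1}\chi_{A_{j-1}}$. Combining this with the vanishing of $g$ on $K_{j-1}$ and the triangle inequality,
\begin{equation*}
    \|g\|_2 \leq \bigl\|P_{T_{j-1}}f\bigr\|_2 + \bigl\|v_{j-1}\chi_{A_{j-1}}\bigr\|_2 \leq 1 + \|v_{j-1}\|_2,
    \end{equation*}
where $\|P_{T_{j-1}}f\|_2\leq \|f\|_2\leq 1$ follows from the spectral representation $\|P_t\|_{2,2}\leq 1$. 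Multiplying through gives the stated bound. There is no real obstacle; the only point requiring care is the nesting $K_j\subset K_{j-1}$ and the distance calculation that produces exactly the exponent $-j$ appearing in $D(t_j,\epsilon 2^{-j})$.
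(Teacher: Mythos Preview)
Your proof is correct and follows essentially the same approach as the paper: both write $(u_{j}-v_{j})\chi_{K_{j}}$ as $P_{t_{j}}$ applied to a function supported on $X\setminus K_{j-1}=A_{j-1}\cup L_{j-1}$, verify the separation $\epsilon 2^{-j}$ from $K_{j}$, apply~\eqref{heatbump_heatoperatorestimate}, and bound the $L^{2}$ norm using $\|P_{T_{j-1}}f\|_{2}\leq 1$ plus the $v_{j-1}\chi_{A_{j-1}}$ contribution. Your distance calculation via the triangle inequality is if anything cleaner than the paper's phrasing.
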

\begin{proof}
From the fact that $P_{t}\phi=e^{-\lambda t}\phi$ we have for $j\geq 2$:
\begin{align*}
    \lefteqn{P_{t_{j}}u_{j-1} - P_{T_{j}}f }\quad\notag\\
    &= P_{t_{j}} \biggl( \Bigr( P_{T_{j-1}}f(x) \Bigl) \chi(x)_{K_{j-1}}
        + v_{j-1}\chi(x)_{A_{j-1}} \biggr) - P_{t_{j}}P_{T_{j-1}} f \notag\\
    &=P_{t_{j}} \biggl( - \Bigr( P_{T_{j-1}}f(x) \Bigl) \chi(x)_{A_{j-1}\cup L_{j-1}}
        + v_{j-1}\chi(x)_{A_{j-1}} \biggr).
    \end{align*}
On $K_{j}$ the above is the difference between $v_{j}$ and $u_{j}$.  Since the support of the
function to which we apply the heat operator is in $A_{j-1}\cup L_{j-1}$ and is therefore at least
$\epsilon 2^{-j}$ distance from $A_{j}$ we can use \eqref{heatbump_heatoperatorestimate} to see
\begin{equation*}
    \int_{K_{j}} |u_{j}-v_{j}|^{2}
    \leq D\bigl(t_{j},\epsilon 2^{-j}\bigr)^{2}
    \bigl\| \Bigr( P_{T_{j-1}}f(x) \Bigl) \chi(x)_{A_{j-1}\cup L_{j-1}} +  v_{j-1}(x)\chi(x)_{A_{j-1}} \bigr\|_{2}^{2}
    \end{equation*}
so
\begin{align*}
    \bigl\| (u_{j}-v_{j})\chi_{K_{j}} \bigr\|_{2}
    &\leq D\bigl(t_{j},\epsilon 2^{-j}\bigr) \bigl\| \Bigr( P_{T_{j-1}}f(x) \Bigl) \chi(x)_{A_{j-1}\cup L_{j-1}} +  v_{j-1}(x)\chi(x)_{A_{j-1}}
    \bigr\|_{2}\\
    &\leq D\bigl(t_{j},\epsilon 2^{-j}\bigr) \bigl( 1 +  \| v_{j-1}(x)\chi(x)_{A_{j-1}} \|_{2}\bigr)
    \end{align*}
because $P_{t}$ contracts $L^{2}$ norm and $\|f\|_{2}\leq 1$.
\end{proof}

\begin{lemma}\label{heatbump_boundforujandvj} For $j\geq 1$
\begin{equation*}
    \|v_{j+1}\|_{2}\leq \| u_{j}\|_{2} \leq 1 + 4\sum_{i=2}^{j} D(t_{i},\epsilon2^{-i}) \leq 3
    \end{equation*}
\end{lemma}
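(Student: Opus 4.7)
The plan is to handle the two inequalities separately. The first, $\|v_{j+1}\|_2 \leq \|u_j\|_2$, I get for free from $v_{j+1} = P_{t_{j+1}} u_j$ and the $L^2$-contractivity of the Markov semigroup. The second I prove by induction on $j$, with the base case $j=1$ trivial since $u_1 = f\chi_{K_1}$ and $\|f\|_2 \leq 1$ give $\|u_1\|_2 \leq 1$, matching the stated bound with an empty sum.

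For the induction step my goal is a clean single-step recursion of the form $\|u_j\|_2 \leq \|u_{j-1}\|_2 + E_j$ with $E_j := D(t_j,\epsilon 2^{-j})(1+\|v_{j-1}\|_2)$. The idea is to exploit that $K_j, A_j, L_j$ partition $X$, so Pythagoras gives
\begin{equation*}
    \|u_j\|_2^2 = \|P_{T_j} f\|_{L^2(K_j)}^2 + \|v_j\|_{L^2(A_j)}^2.
\end{equation*}
Lemma \ref{heatbump_differenceofvjandujonKj} controls $\|(P_{T_j}f - v_j)\chi_{K_j}\|_2 \leq E_j$, so a triangle inequality on $K_j$ yields $\|P_{T_j} f\|_{L^2(K_j)} \leq \|v_j\|_{L^2(K_j)} + E_j$. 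Substituting, expanding the square, and using $\|v_j\|_{L^2(K_j)}^2 + \|v_j\|_{L^2(A_j)}^2 \leq \|v_j\|_2^2 \leq \|u_{j-1}\|_2^2$ should collapse the right-hand side to the perfect square $(\|u_{j-1}\|_2 + E_j)^2$, from which the recursion follows by taking square roots.

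Iterating the recursion back to $\|u_1\|_2 \leq 1$ gives $\|u_j\|_2 \leq 1 + \sum_{i=2}^j D(t_i,\epsilon 2^{-i})(1+\|v_{i-1}\|_2)$. The induction hypothesis provides $\|v_{i-1}\|_2 \leq \|u_{i-2}\|_2 \leq 3$ for $i \geq 3$, while $v_1 \equiv 0$ handles $i=2$; in both cases $1 + \|v_{i-1}\|_2 \leq 4$. This gives $\|u_j\|_2 \leq 1 + 4 \sum_{i=2}^j D(t_i,\epsilon 2^{-i})$, and the standing assumption $C_0 \leq \frac{1}{2}$ then yields $\|u_j\|_2 \leq 1 + 4 \cdot \frac{1}{2} = 3$, closing the induction.

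The anticipated obstacle is extracting the tight constant $4$: a naive triangle-inequality bound on $\|u_j - v_j\|_2$ (splitting into a $K_j$-piece controlled by Lemma \ref{heatbump_differenceofvjandujonKj} and an $L_j$-piece controlled by the heat estimate applied to $v_j = P_{t_j}u_{j-1}$, whose source $u_{j-1}$ sits at distance $\geq \epsilon 2^{-j}$ from $L_j$) combined with $\|v_j\|_2 \leq \|u_{j-1}\|_2$ inflates the recursion constant and no longer closes the induction against $C_0 \leq \frac{1}{2}$. The essential trick is to square first, so that the cross-term $2 E_j \|v_j\|_{L^2(K_j)}$ assembles with the remaining Pythagorean pieces into a perfect square instead of being bounded crudely.
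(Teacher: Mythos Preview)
Your proof is correct and follows the same inductive skeleton as the paper: derive the one-step recursion $\|u_j\|_2 \leq \|u_{j-1}\|_2 + D(t_j,\epsilon 2^{-j})(1+\|v_{j-1}\|_2)$ from Lemma~\ref{heatbump_differenceofvjandujonKj} and the $L^2$-contractivity of $P_t$, then close by strong induction using $C_0\leq\tfrac12$. Your Pythagorean perfect-square derivation of the recursion is in fact tighter than the paper's triangle-inequality chain, which as written passes through $\|v_j\chi_{A_j}\|_2+\|v_j\chi_{K_j}\|_2\leq\|v_j\|_2$, an inequality that need not hold; your squaring trick is a clean way to justify that step.
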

\begin{proof}
Since $P_{t}$ is an $L^{2}$ contraction, the first inequality is immediate.  Note that the second
is true for $j=1$ because $\|u_{1}\|_{2}\leq \|f\|_{2}\leq1$. We also know from our
assumption~\eqref{heatbump_mainassumptiononheatkernel} that
$\sum_{j}D(t_{j},\epsilon2^{-j})=C_{0}\leq\frac{1}{2}$. Induction and
Lemma~\ref{heatbump_differenceofvjandujonKj} then imply
\begin{align*}
    \|u_{j}\|_{2}
    &= \|v_{j}\chi_{A_{j}}\|_{2} + \|u_{j}\chi_{K_{j}}\|_{2} \\
    &\leq \|v_{j}\chi_{A_{j}}\|_{2} + \|v_{j}\chi_{K_{j}}\|_{2} + \|(u_{j}-v_{j})\chi_{K_{j}}\|_{2} \\
    &\leq \|v_{j}\|_{2}+ D(t_{j},\epsilon 2^{-j}) \bigl( 1 + \|v_{j-1} \|_{2} \bigr)\\
    &\leq \|u_{j-1}\|_{2} + D(t_{j},\epsilon 2^{-j}) \bigl( 1 + \|u_{j-2} \|_{2} \bigr)\\
    &\leq 1+ 4\sum_{i=2}^{j-1} D(t_{i},\epsilon2^{-i}) + D(t_{j},\epsilon 2^{-j}) \bigl( 2+
    4\sum_{i=2}^{j-2} D(t_{i},\epsilon2^{-i}) \bigr)\\
    &\leq 1+ 4\sum_{i=2}^{j} D(t_{i},\epsilon2^{-i})
    \leq 3
    \end{align*}
for $j\geq2$.  Of course when $j=2$ the $\|u_{j-2}\|$ term does not appear, and the estimate is
trivial when $j=1$.
\end{proof}

\begin{lemma}\label{HeatBump-ujvjhavesamelimitlemma} For $j\geq2$ we have
$\bigl\|u_{j}-v_{j}\bigr\|_{2}\leq  7 D(t_{j},\epsilon 2^{-j})$.
\end{lemma}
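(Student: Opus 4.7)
The plan is to decompose $\|u_j - v_j\|_2$ over the partition $K_j \cup A_j \cup L_j$ of $X$ and estimate each contribution separately. Since $u_j = v_j$ on $A_j$ by construction, the pointwise difference is supported on $K_j \cup L_j$, so
\begin{equation*}
    \|u_j - v_j\|_2 \leq \|(u_j - v_j)\chi_{K_j}\|_2 + \|(u_j - v_j)\chi_{L_j}\|_2.
\end{equation*}

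The first summand is already controlled by Lemma \ref{heatbump_differenceofvjandujonKj} and the uniform bound $\|v_{j-1}\|_2 \leq 3$ of Lemma \ref{heatbump_boundforujandvj}, giving
$\|(u_j - v_j)\chi_{K_j}\|_2 \leq 4 D(t_j,\epsilon 2^{-j})$. So the main task is to bound the second summand. Since $u_j \equiv 0$ on $L_j$, this equals $\|v_j \chi_{L_j}\|_2 = \|(P_{t_j} u_{j-1})\chi_{L_j}\|_2$, and the idea is to exploit the off-diagonal decay estimate \eqref{heatbump_heatoperatorestimate} of the heat flow: $u_{j-1}$ vanishes on $L_{j-1}$, and we want to show that $\sppt(u_{j-1})$ lies at distance at least $\epsilon 2^{-j}$ from $L_j$.

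The key geometric step is a direct triangle inequality computation: for $x \in L_j$ and $y \notin L_{j-1}$ we have $\dist(x,K) > \epsilon(1-2^{-j})$ and $\dist(y,K) \leq \epsilon(1-2^{-(j-1)})$, whence
\begin{equation*}
    \dist(x,y) \geq \dist(x,K) - \dist(y,K) > \epsilon\bigl(2^{-(j-1)} - 2^{-j}\bigr) = \epsilon 2^{-j}.
\end{equation*}
Plugging into \eqref{heatbump_heatoperatorestimate} and using $\|u_{j-1}\|_2 \leq 3$ from Lemma \ref{heatbump_boundforujandvj} yields $\|v_j \chi_{L_j}\|_2 \leq 3 D(t_j,\epsilon 2^{-j})$.

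Combining the two estimates gives $\|u_j - v_j\|_2 \leq 4 D(t_j,\epsilon 2^{-j}) + 3 D(t_j,\epsilon 2^{-j}) = 7 D(t_j,\epsilon 2^{-j})$, as claimed. There is no real obstacle; the proof is essentially a clean accounting exercise. The only point needing a moment of thought is the separation estimate between $L_j$ and $\sppt(u_{j-1}) \subset X \setminus L_{j-1}$, which is precisely what the dyadic choice of the nested regions $K_j, L_j, A_j$ was designed to deliver.
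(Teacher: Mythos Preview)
Your proof is correct and follows essentially the same approach as the paper's: decompose over $K_j \cup A_j \cup L_j$, use Lemma~\ref{heatbump_differenceofvjandujonKj} together with $\|v_{j-1}\|_2\leq 3$ for the $K_j$ piece, and use the off-diagonal estimate~\eqref{heatbump_heatoperatorestimate} together with $\|u_{j-1}\|_2\leq 3$ for the $L_j$ piece. Your explicit triangle-inequality verification of the separation $\dist(L_j,\sppt(u_{j-1}))\geq \epsilon 2^{-j}$ is a nice addition; the paper simply asserts this distance bound.
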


\begin{proof}
Note first that $u_{j}\equiv0$ on $L_{j}$, so
\begin{equation*}
    \bigl\| (u_{j}-v_{j} )\chi_{L_{j}}\bigr\|_{2}^{2} = \bigl\|  v_{j} \chi_{L_{j}}\bigr\|_{2}^{2}
    =\int_{L_{j}}  \bigl| P_{t_{j}} u_{j-1} \bigr|^{2}
    \leq (D(t_{j},\epsilon2^{-j}))^{2} \|u_{j-1}\|_{2}^{2}
    \end{equation*}
because the support of $u_{j-1}$ is in $K_{j-1}\cup A_{j-1}$, so is separated from $L_{j}$ by a
distance of at least $\epsilon2^{-j}$, and therefore~\eqref{heatbump_heatoperatorestimate} is
applicable.

Since $u_{j}$ and $v_{j}$ coincide on $A_{j}$ we have
\begin{align*}
    \bigl\|u_{j}-v_{j}\bigr\|_{2}
    &= \bigl\| (u_{j}-v_{j})\chi_{K_{j}} \bigr\|_{2} +
    \bigl\|(u_{j}-v_{j})\chi_{L_{j}}\bigr\|_{2}\\
    &\leq D(t_{j},\epsilon 2^{-j}) \bigl( 1 + \|v_{j-1} \chi_{A_{j-1}} \|_{2} \bigr) + (D(t_{j},\epsilon2^{-j}))
    \|u_{j-1}\|_{2}\\
    &\leq 7 D(t_{j},\epsilon 2^{-j})
    \end{align*}
by Lemma~\ref{heatbump_differenceofvjandujonKj} and Lemma~\ref{heatbump_boundforujandvj}.
\end{proof}

\begin{lemma}\label{heatbump_seqsareCauchy}
For each non-negative integer $k$ the sequence $\{\Delta^{k}v_{j}\}$ is $L^{2}$-Cauchy.
\end{lemma}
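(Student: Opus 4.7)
The plan is to prove the stronger statement $\sum_{j\geq 2}\|\Delta^k(v_{j+1}-v_j)\|_2<\infty$, from which the Cauchy property follows by telescoping. Everything rests on choosing a decomposition of $v_{j+1}-v_j$ whose pieces can be controlled using \eqref{spectralrepestimate}, Lemma~\ref{HeatBump-ujvjhavesamelimitlemma}, and the summability hypothesis~\eqref{heatbump_mainassumptiononheatkernel}.

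First I would iterate the identity $v_{j+1}=P_{t_{j+1}}v_j+P_{t_{j+1}}(u_j-v_j)$ from the base case $v_2=P_{t_2}u_1$ to obtain the closed form $v_{j+1}=P_{T_{j+1}}u_1+\sum_{i=2}^{j}P_{T_{j+1}-T_i}(u_i-v_i)$. Subtracting the analogous expression for $v_j$ gives the telescoping decomposition
\begin{equation*}
v_{j+1}-v_j = (P_{t_{j+1}}-I)P_{T_j}u_1 + \sum_{i=2}^{j-1}(P_{t_{j+1}}-I)P_{T_j-T_i}(u_i-v_i) + P_{t_{j+1}}(u_j-v_j),
\end{equation*}
in which each $(P_{t_{j+1}}-I)$ factor is paired with a heat operator applied to a previous correction.

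Next I would apply $\Delta^k$ and bound each piece in $L^2$. The third piece is immediate from~\eqref{spectralrepestimate} and Lemma~\ref{HeatBump-ujvjhavesamelimitlemma}: it is bounded by $7c_k t_{j+1}^{-k}D(t_j,\epsilon2^{-j})$, which sums over $j$ to $7c_kC_k$ by~\eqref{heatbump_mainassumptiononheatkernel}. For the other two I would use the spectral-representation estimate
\begin{equation*}
\|\Delta^k(P_{t_{j+1}}-I)P_\tau\|_{2,2} = \sup_{x\geq 0}x^k(1-e^{-xt_{j+1}})e^{-x\tau} \leq t_{j+1}\sup_{x\geq 0}x^{k+1}e^{-x\tau} = c_{k+1}t_{j+1}\tau^{-(k+1)},
\end{equation*}
obtained from $1-e^{-y}\leq y$. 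Combined with $T_j-T_i\geq t_{i+1}$, $T_j\geq t_2$, $\|u_1\|_2\leq 1$, and Lemma~\ref{HeatBump-ujvjhavesamelimitlemma}, this yields the bounds $c_{k+1}t_{j+1}t_2^{-(k+1)}$ for the first piece and, after summing over $i$ via~\eqref{heatbump_mainassumptiononheatkernel} at level $k+1$, at most $7c_{k+1}C_{k+1}t_{j+1}$ for the middle piece. Each of these sums over $j$ to a finite multiple of $T=\sum t_{j+1}$.

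The main obstacle is finding a decomposition to which~\eqref{heatbump_mainassumptiononheatkernel} actually applies. The naive splitting $v_{j+1}-v_j=P_{t_{j+1}}(u_j-v_j)+(P_{t_{j+1}}-I)v_j$ combined with the direct spectral bound $\|\Delta^k(P_{t_{j+1}}-I)P_{t_j}\|_{2,2}\leq c_{k+1}t_{j+1}t_j^{-(k+1)}$ would require $\sum_j t_{j+1}t_j^{-(k+1)}<\infty$, which can diverge once $k\geq 1$ for the rapidly decaying sequences (for example $t_j=j^{-j}$) that one typically uses to satisfy~\eqref{heatbump_mainassumptiononheatkernel}. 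The full unrolling cures this: every $(P_{t_{j+1}}-I)$ factor is now paired with $P_{T_j-T_i}$ whose time is bounded below by $t_{i+1}$, so summability is delivered by~\eqref{heatbump_mainassumptiononheatkernel} at level $k+1$ rather than by any additional hypothesis on the decay rate of the $t_j$.
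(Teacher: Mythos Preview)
Your proof is correct and takes a genuinely different route from the paper. The paper uses the simple split $v_{j+1}-v_j=P_{t_{j+1}}(u_j-v_j)+(P_{t_{j+1}}-I)v_j$ and handles the second term via the spectral-gap estimate $\|P_t-I\|_{2,2}\leq\lambda t$, which leads to the recursive inequality $\|\Delta^k v_{j+1}\|_2\leq 7c_kt_{j+1}^{-k}D(t_j,\epsilon2^{-j})+(1+\lambda t_{j+1})\|\Delta^k v_j\|_2$; a Gronwall-type induction then gives a uniform bound on $\|\Delta^k v_j\|_2$ (this is~\eqref{heatbump_estimateforDeltakofsmoothed}), and substituting back yields summability. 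You instead unroll the recursion fully and replace the spectral-gap input by the pure spectral-calculus bound $\|\Delta^k(P_t-I)P_\tau\|_{2,2}\leq c_{k+1}\,t\,\tau^{-(k+1)}$, so that every $(P_{t_{j+1}}-I)$ is paired with a heat operator of time at least $t_{i+1}$ and summability comes from~\eqref{heatbump_mainassumptiononheatkernel} at level $k+1$. Your argument is in one respect more elementary: it does not use the spectral-gap hypothesis at all, and it avoids the inductive step. The paper's approach, on the other hand, stays at level $k$ of~\eqref{heatbump_mainassumptiononheatkernel} and along the way produces the explicit uniform bound~\eqref{heatbump_estimateforDeltakofsmoothed} on $\|\Delta^k v_j\|_2$, which is quoted in the proof of Theorem~\ref{heatbump_smoothingheatsmoothed} to obtain the stated constant; your unrolled formula yields such a bound just as easily (apply $\Delta^k$ directly to the closed form for $v_{j+1}$ and use $T_{j+1}-T_i\geq t_{i+1}$), so nothing is lost.
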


\begin{proof}
The heat operator commutes with $\Delta$, so for $j\geq2$
\begin{align}
    \bigl\|\Delta^{k}(v_{j+1}-v_{j})\bigr\|_{2}
    &=\bigl\| \Delta^{k} ( P_{t_{j+1}} u_{j} - v_{j} ) \bigr\|_{2} \notag\\
    &\leq \bigl\| \Delta^{k} P_{t_{j+1}} (u_{j}-v_{j}) \bigr\|_{2}
        + \bigl\| \Delta^{k} ( P_{t_{j+1}} v_{j}-v_{j} ) \bigr\|_{2} \notag\\
    &\leq \bigl\|\Delta^{k}P_{t_{j+1}} \bigr\|_{2,2} \bigl\|u_{j}-v_{j} \bigr\|_{2}
        + \bigl\| ( P_{t_{j+1}} -I) \Delta^{k}v_{j} \bigr\|_{2} \notag\\
    &\leq 7c_{k}t_{j+1}^{-k} D(t_{j}, 2^{-j}\epsilon)  + \lambda t_{j+1}
\|\Delta^{k}v_{j}\|_{2}. \label{HeatBump-boundfordifferencebetweenvjs}
    \end{align}
where we used~\eqref{spectralrepestimate}, Lemma \ref{HeatBump-ujvjhavesamelimitlemma}, and that
$\|P_{t}-I\|_{2,2}\leq\lambda t$ for $t<\frac{2}{\lambda}$ by the spectral gap assumption. Then
\begin{equation*}
    \|\Delta^{k}v_{j+1}\|_{2} \leq  7 c_{k}t_{j+1}^{-k} D(t_{j}, 2^{-j}\epsilon) +
    (1+\lambda t_{j+1})\|\Delta^{k}v_{j} \|_{2}
    \end{equation*}
and by induction
\begin{align}\label{heatbump_estimateforDeltakofsmoothed}
    \|\Delta^{k} v_{j+1} \|_{2}
    &\leq  7c_{k}\sum_{l=1}^{j} \biggl( t_{l+1}^{-k} D(t_{l}, 2^{-l}\epsilon) \prod_{m=l+1}^{j} (1+\lambda
    t_{m+1}) \biggr) \bigl\| \Delta^{k}v_{2} \bigr\|_{2} \notag\\
    &\leq 7c_{k}C_{k}e^{\lambda T} \bigl\| \Delta^{k}v_{2} \bigr\|_{2}.
    \end{align}
because of~\eqref{heatbump_mainassumptiononheatkernel} and the fact that $\sum_{j}t_{j}\leq T$ so
$\prod_{1}^{\infty} (1+\lambda t_{m+1})\leq e^{\lambda T}$. Substituting into
\eqref{HeatBump-boundfordifferencebetweenvjs} gives
\begin{align*}
    \bigl\|\Delta^{k}(v_{j+1}-v_{j})\bigr\|_{2}
    &\leq 7c_{k} t_{j+1}^{-k} D(t_{j}, 2^{-j}\epsilon)
     + 7c_{k}C_{k}\lambda e^{\lambda T} \bigl\| \Delta^{k}v_{2} \bigr\|_{2}  t_{j+1}
    \end{align*}
which is summable by the assumption~\eqref{heatbump_mainassumptiononheatkernel}.
\end{proof}

The construction in Theorem~\ref{heatbump_smoothingheatsmoothed} produces a function $v$ satisfying
$\Delta^{k}v\in L^{2}$ for all $k$.  The question of whether $v$ is actually smooth (i.e. has
$\Delta^{k}v\in C(X)$ for all $k$) is more difficult.  Given that the only way we have to verify
the assumptions for Theorem~\ref{heatbump_smoothingheatsmoothed} in specific examples is to use an
estimate of the form~\eqref{HeatBump-subGaussianbound}, and that in many instances the heat kernel
is known to be continuous, the following is essentially as useful.

\begin{theorem}\label{heatbump_smoothedfunctionthm}
Suppose that the heat flow $P_{t}$ is given by integration against a continuous kernel $p(t,x,y)$
satisfying~\eqref{HeatBump-subGaussianbound}.  Let $\phi\in\mathcal{A}(X)$ and $K$ be a fixed
compact set.  For any $0<\epsilon<\epsilon_{0}$ there is a smooth function $v$ that is equal to
$\phi$ on $K$ and equal to zero outside the $\epsilon$-neighborhood of $K$.
\end{theorem}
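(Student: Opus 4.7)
The plan is to upgrade the conclusion of Theorem~\ref{heatbump_smoothingheatsmoothed} from $\Delta^k v\in L^2$ for all $k$ to $\Delta^k v\in C(X)$ for all $k$, under the added hypothesis that the kernel $p(t,x,y)$ is continuous and satisfies~\eqref{HeatBump-subGaussianbound}. The function $v$ itself is that produced by Theorem~\ref{heatbump_smoothingheatsmoothed} applied to a suitably normalized $f$, so it already coincides with $\phi$ on $K$ and vanishes outside the $\epsilon$-neighborhood of $K$; only the continuity of $v$ and of its iterated Laplacians needs to be addressed.

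The first step is to extract an ultracontractivity bound from~\eqref{HeatBump-subGaussianbound}. By Cauchy--Schwarz and the semigroup identity $p(2t,x,x)=\int p(t,x,y)^{2}\,d\mu(y)$, the sub-Gaussian bound at the diagonal gives
\begin{equation*}
    |P_{t}f(x)|^{2}\leq p(2t,x,x)\|f\|_{2}^{2}\leq C\,t^{-\alpha/\beta}\|f\|_{2}^{2},
\end{equation*}
so $\|P_{t}\|_{2,\infty}\leq C\,t^{-\alpha/(2\beta)}$. The second step is a Sobolev-type embedding. Using the Laplace representation $(I+\Delta)^{-N}=\Gamma(N)^{-1}\int_{0}^{\infty}t^{N-1}e^{-t}P_{t}\,dt$, valid by the spectral theorem for the non-negative self-adjoint $\Delta$, the ultracontractivity bound integrates to show $\|(I+\Delta)^{-N}\|_{2,\infty}<\infty$ whenever $N>\alpha/(2\beta)$. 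Consequently
\begin{equation*}
    \|g\|_{\infty}\leq C_{N}\|(I+\Delta)^{N}g\|_{2}
\end{equation*}
for any $g$ with $\Delta^{j}g\in L^{2}$ for $j=0,\dotsc,N$.

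The third step combines these ingredients with the conclusion of Theorem~\ref{heatbump_smoothingheatsmoothed}. Since $\Delta^{j}v\in L^{2}$ for every $j$, the right-hand side above is finite for every $N$. Furthermore, for each $s>0$ the function $P_{s}v$ is continuous, because the continuous kernel $p(s,\cdot,\cdot)$ applied to an $L^{2}$ function yields a continuous function. Because $\Delta$ commutes with $P_{s}$,
\begin{equation*}
    \|v-P_{s}v\|_{\infty}\leq C_{N}\bigl\|(I+\Delta)^{N}v-P_{s}(I+\Delta)^{N}v\bigr\|_{2},
\end{equation*}
and strong continuity of $\{P_{s}\}$ on $L^{2}$ drives the right-hand side to zero as $s\to0$. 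Thus $v$ is a uniform limit of continuous functions and hence continuous. Applying exactly the same argument to $\Delta^{k}v$ (which is itself in $\dom(\Delta^{\infty})$ in the $L^{2}$ sense by Theorem~\ref{heatbump_smoothingheatsmoothed}) gives $\Delta^{k}v\in C(X)$ for every $k$.

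The principal obstacle is the Sobolev-type embedding, which is where the continuous sub-Gaussian kernel is used in an essential way; without that hypothesis there is no a priori reason that iterated $L^{2}$ regularity would produce a continuous representative. Once the embedding is established, the remaining argument is a standard approximation by $P_{s}v$ and requires only the semigroup commutation with $\Delta$ and strong continuity on $L^{2}$.
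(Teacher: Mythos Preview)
Your argument is correct and follows essentially the same route as the paper: apply Theorem~\ref{heatbump_smoothingheatsmoothed} to get $\Delta^{k}v\in L^{2}$ for all $k$, then use the sub-Gaussian bound to upgrade to continuity via a resolvent-type operator. The only difference is cosmetic: the paper works with $\Delta^{-k}$ and shows directly that it maps $L^{2}$ into $C(X)$ by estimating $|\Delta^{-k}f(x)-\Delta^{-k}f(x')|$, whereas you use $(I+\Delta)^{-N}$, first establish an $L^{2}\to L^{\infty}$ bound, and then obtain continuity by uniform approximation with $P_{s}v$.
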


Theorem~\ref{heatbump_smoothedfunctionthm} is proved by applying
Theorem~\ref{heatbump_smoothingheatsmoothed} and the following lemma.

\begin{lemma} If $P_{t}$ is given by integration against a continuous kernel $p(t,x,y)$
satisfying~\eqref{HeatBump-subGaussianbound} then $\Delta^{-k}$ maps $L^{2}(X,\mu)$ into the
continuous functions $C(X)$ for all $k>\frac{\alpha}{\beta}-1$.  In particular, if $f$ is such that
$\Delta^{k}f\in L^{2}$ for all $k$, then $f$ is smooth.
\end{lemma}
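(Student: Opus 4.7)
The plan is to represent $\Delta^{-k}$ via the heat semigroup and then exploit both the sub-Gaussian on-diagonal bound and the spectral gap to obtain uniform convergence of the resulting integral. Starting from the Gamma function identity $x^{-k}=\Gamma(k)^{-1}\int_0^\infty t^{k-1}e^{-xt}\,dt$, the spectral theorem gives, for $f$ orthogonal to $\ker\Delta$,
$$\Delta^{-k}f=\frac{1}{\Gamma(k)}\int_0^\infty t^{k-1}P_tf\,dt,$$
with $L^2$-convergence assured by the spectral gap estimate $\|P_tf\|_2\leq e^{-\lambda t}\|f\|_2$. The kernel of $\Delta$ consists of constants by the spectral gap hypothesis and is handled trivially. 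The objective is to show that the integral above converges uniformly in $x$ and so produces a continuous function.

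I would first verify that for each fixed $t>0$, the function $P_tf$ is continuous: continuity of $p(t,\cdot,\cdot)$ together with Cauchy--Schwarz yields $|P_tf(x)-P_tf(x')|\leq \|p(t,x,\cdot)-p(t,x',\cdot)\|_2\|f\|_2$, and the right-hand side tends to zero as $x'\to x$ by dominated convergence, with an integrable majorant supplied by the Gaussian factor in~\eqref{HeatBump-subGaussianbound}. Next comes the crucial $L^2\to L^\infty$ bound: factoring $P_t=P_{t/2}P_{t/2}$ and using self-adjointness of $P_{t/2}$ gives $\|P_{t/2}\|_{L^2\to L^\infty}^2=\sup_x\|p(t/2,x,\cdot)\|_2^2=\sup_x p(t,x,x)\leq \gamma_1 t^{-\alpha/\beta}$; combining with the spectral-gap contraction on the orthogonal complement of $\ker\Delta$ produces
$$\|P_tf\|_\infty\leq Ct^{-\alpha/(2\beta)}e^{-\lambda t/2}\|f\|_2.$$

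Substituting into the subordination formula bounds $\|\Delta^{-k}f\|_\infty$ by a constant multiple of $\|f\|_2\int_0^\infty t^{k-1-\alpha/(2\beta)}e^{-\lambda t/2}\,dt$, finite on the claimed range of $k$, and the integrand depends continuously on $x$, so the limit lies in $C(X)$. The second assertion follows at once: if $\Delta^j f\in L^2$ for every $j$, then for each $m\geq 0$ we pick $k>\alpha/\beta-1$ and write $\Delta^mf=\Delta^{-k}(\Delta^{m+k}f)\in C(X)$, which shows $f$ is smooth. The main technical obstacle is matching the stated threshold $k>\alpha/\beta-1$; the crude $L^2\to L^\infty$ argument above only produces $k>\alpha/(2\beta)$, and to sharpen it one should instead analyze the integrated kernel $K_k(x,y)=\Gamma(k)^{-1}\int_0^\infty t^{k-1}p(t,x,y)\,dt$ directly, using the off-diagonal Gaussian decay in~\eqref{HeatBump-subGaussianbound} to extract the diagonal singularity $K_k(x,y)\lesssim d(x,y)^{\beta k-\alpha}$ (for $k<\alpha/\beta$) plus a bounded remainder controlled by the spectral gap, and then bounding $\|K_k(x,\cdot)\|_2$ and verifying its continuous dependence on $x$.
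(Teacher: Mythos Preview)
Your approach is essentially the paper's: write $\Delta^{-k}$ via the heat semigroup integral, bound $|P_tf(x)|\le \|p(t,x,\cdot)\|_2\|f\|_2$, and use $\|p(t,x,\cdot)\|_2^2=p(2t,x,x)\le Ct^{-\alpha/\beta}$ for the small-$t$ singularity. The only cosmetic difference is in handling large $t$: you invoke the spectral gap to get an $e^{-\lambda t/2}$ factor directly, while the paper splits the integral at $t=1$ and uses the semigroup identity $p(t+1,x,y)=\int p(1,x,z)p(t,z,y)\,d\mu(z)$ to rewrite the tail as $(\Delta_y+I)^{-k}p(1,x,y)$, then appeals to $L^2$-contractivity of $(\Delta+I)^{-k}$ and continuity of $p(1,x,\cdot)$. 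Both devices achieve the same end.

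Your observation about the threshold is sharp. The paper's displayed computation equates $\|p(t,x,\cdot)\|_{L^2(y)}$ with $\int p(t,x,y)^2\,d\mu(y)=p(2t,x,x)$ rather than its square root, and the stated exponent $\alpha/\beta-1$ comes from that slip; the argument as written in fact yields $k>\alpha/(2\beta)$ (with the correct $t^{k-1}$ weight), exactly as you found. So you need not pursue the off-diagonal refinement to match the paper---your ``crude'' bound already reproduces what the paper's own proof actually delivers. Since the only application is the ``in particular'' clause, where one may take $k$ arbitrarily large, the precise threshold is immaterial and your argument is complete for that purpose.
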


\begin{proof}
The spectral representation of $\Delta^{-k}$ implies
\begin{align*}
    \Delta^{-k}f(x)
    &=  \iint_{0}^{\infty} t^{k} p(t,x,y) dt \, f(y) d\mu(y)\\
    &= \iint_{0}^{1} t^{k} p(t,x,y) dt \, f(y) d\mu(y)
        + \iint_{0}^{\infty} (t+1)^{k} p(t+1,x,y) dt \, f(y) d\mu(y)
    \end{align*}
however we may rewrite the second term using
\begin{align*}
    \int_{0}^{\infty} (t+1)^{k} p(t+1,x,y)\, dt
    &=\int_{0}^{\infty} (t+1)^{k} \int p(1,x,z) p(t,z,y)\, d\mu(z)\, dt \\
    &= \int p(1,x,z) \int_{0}^{\infty} (t+1)^{k} p(t,z,y)\, dt \, d\mu(z)\\
    &= (\Delta_{y}+I)^{-k} p(1,x,y).
    \end{align*}
By the inequalities of H\"{o}lder and Minkowski
\begin{align*} 
    \lefteqn{\Bigl| \Delta^{-k}f(x) - \Delta^{-k}f(x') \Bigr| } \quad& \\
    &\leq \|f\|_{2} \biggl\| \int_{0}^{1} t^{k} \bigl( p(t,x,y) - p(t,x',y) \bigr)   dt
    \biggr\|_{L^{2}(y)} \notag\\
    &\qquad +  \|f\|_{2} \biggl\| (\Delta_{y}+I)^{-k} \Bigl( p(1,x,y) -  p(1,x',y) \Bigr) \biggr\|_{L^{2}(y)} \\
    &\leq \|f\|_{2} \int_{0}^{1} t^{k} \bigl\| p(t,x,y) - p(t,x',y) \bigr\|_{L^{2}(y)} \, dt
    + \|f\|_{2} \biggl\|  p(1,x,y) - p(1,x',y) \biggr\|_{L^{2}(y)}
    \end{align*}
because $(\Delta_{y}+I)^{-k}$ is a contraction of $L^{2}$. Now $\bigl\| p(t,x,y) - p(t,x',y)
\bigr\|_{L^{2}(y)}\rightarrow 0$ as $x\rightarrow x'$, for any fixed $t$, because $p(t,x,y)$ is
continuous.  This deals with the second term and shows that the integrand in the first term
converges pointwise to zero. The latter is bounded by
$t^{k}\bigl(\|p(t,x,\cdot)\|_{2}+\|p(t,x',\cdot)\|_{2}\bigr)$, which is an $L^{1}$ function when
$k>\frac{\alpha}{\beta}-1$, because
\begin{equation*}
    \bigl\| p(t,x,y) \bigr\|_{L^{2}(y)}
    = \int p(t,x,y) p(t,x,y)\, d\mu(y)
    = p(2t,x,x)
    \leq C t^{-\alpha/\beta}
    \end{equation*}
by~\eqref{HeatBump-subGaussianbound}.  The result follows by dominated convergence.
\end{proof}

\begin{corollary}\label{Heatbump-heatbumptheorem}
If $\mu(X)<\infty$ then  for any compact set $K$ and neighborhood $U$ of $K$, there is a smooth
non-negative function that is $1$ on $K$ and vanishes outside $U$.
\end{corollary}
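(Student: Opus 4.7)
The plan is to apply Theorem~\ref{heatbump_smoothedfunctionthm} with $\phi$ taken to be a suitably normalized constant function, and then to rescale. The key observation is that $\mu(X)<\infty$ places the constant function $1$ in $L^{2}(X)$; since $\Delta 1 = 0$, we have $P_{t}1=1$ for all $t>0$. Thus, setting $f = 1/\sqrt{\mu(X)}$ gives $\|f\|_{2}=1$ and $\phi = P_{T}f = 1/\sqrt{\mu(X)}$, a constant function that belongs to $\mathcal{A}(X)$ and is of the form required as input to the theorem.

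Next, using compactness of $K$ and openness of $U\supset K$, I would choose $\epsilon\in(0,\epsilon_{0})$ small enough that the $\epsilon$-neighborhood of $K$ is contained in $U$. Applying Theorem~\ref{heatbump_smoothedfunctionthm} with this $\phi$ and $\epsilon$ produces a smooth $v$ equal to $1/\sqrt{\mu(X)}$ on $K$ and vanishing outside $U$. Multiplying through, $\sqrt{\mu(X)}\,v$ is then a smooth function that is $1$ on $K$ and $0$ outside $U$; smoothness is unaffected by the scalar since $\Delta^{k}$ is linear.

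The only remaining point is non-negativity, which I would verify by inspecting the inductive construction inside the proof of Theorem~\ref{heatbump_smoothingheatsmoothed}. The initial datum $u_{1}=f\chi_{K_{1}}$ is non-negative; heat semigroups arising from Dirichlet forms are positivity preserving, so each $v_{j}=P_{t_{j}}u_{j-1}$ is non-negative; and at each recursion step $u_{j}$ is defined to equal $P_{T_{j}}f = 1/\sqrt{\mu(X)}$, $v_{j}$, or $0$ on the respective regions $K_{j}$, $A_{j}$, $L_{j}$, all of which are non-negative. Hence the $L^{2}$-limit $v$ is non-negative almost everywhere, and being smooth (in particular continuous) it is non-negative pointwise.

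There is no serious obstacle: the corollary is a direct specialization of Theorem~\ref{heatbump_smoothedfunctionthm}. The only items needing care are the normalization to ensure $\|f\|_{2}\leq 1$ (handled by the factor $1/\sqrt{\mu(X)}$) and the promotion of a.e.\ non-negativity to pointwise non-negativity, which follows automatically from continuity of the smooth limit.
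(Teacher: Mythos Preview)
Your argument is correct and follows exactly the same route as the paper's proof, which simply says to apply the theorem to the constant function and observe that all the $u_{j}$ and $v_{j}$ in the construction are non-negative. You have supplied the details the paper omits: the normalization to meet the hypothesis $\|f\|_{2}\leq 1$, the choice of $\epsilon$ so that the $\epsilon$-neighborhood of $K$ lies in $U$, and the passage from a.e.\ to pointwise non-negativity via continuity.
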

\begin{proof} Apply the theorem to the constant function $1$ and note that all of the $u_{j}$ and
$v_{j}$ are non-negative, so the limit function is also.
\end{proof}


\section{A smooth bump as a fixed point of an operator}

To understand why it is sometimes possible to construct a smooth bump function on a self-similar
set as a fixed point of an operator, we invite the reader to consider an elementary situation.  Let
$I=[0,1]$ be the unit interval in $\mathbb{R}$.  We may view $I$ as a p.c.f. self-similar set under
the contractions $f_{0}=x/2$ and $f_{1}=(x+1)/2$.  If $\mu$ is Lebesgue measure and $\mathcal{E}$
is defined using a limit of a regular self-similar harmonic structures with resistance factors
$1/2$ then we obtain the usual Dirichlet energy and Laplacian, and the normal derivatives are the
outward-directed one-sided derivatives at $0$ and $1$ (see \cite{Kigamibook,Strichartzbook} for
details).

The intuition for our construction is as follows.  Consider a symmetric smooth bump function $u$ on
the interval $I=[0,1]$, for which $u\equiv1$ on $[L,1-L]$ as shown in Figure
\ref{FixedPoint-smoothbumponIpic}.
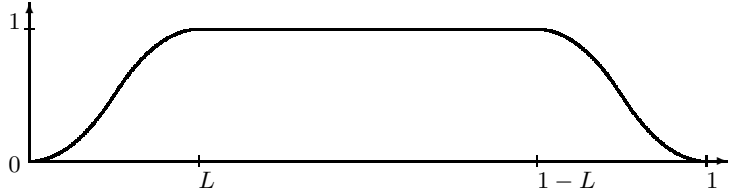
\begin{figure}[htb]
\centerline{
\begin{picture}(264,80)(0,-10)\small
\put(0,0){\vector(1,0){264}} \put(0,0){\vector(0,1){60}} \put(-2,50){\line(1,0){4}}
\put(-8,50){$1$}
\put(-8,-4){$0$} \put(256,-2){\line(0,1){4}} \put(256,-10){$1$} \put(64,-2){\line(0,1){4}}
\put(64,-10){$L$} \put(192,-2){\line(0,1){4}} \put(192,-10){$1-L$}
\thicklines \put(0,0){ \qbezier(0,0)(16,0)(32,25) \qbezier(64,50)(48,50)(32,25)} \put(256,0){
\qbezier(0,0)(-16,0)(-32,25) \qbezier(-64,50)(-48,50)(-32,25)} \put(64,50){\line(1,0){128}}
\end{picture}}
\caption{The smooth bump function $u$.}\label{FixedPoint-smoothbumponIpic}
\end{figure}
If we look at the graph of $\Delta u = d^{2}u/dx^{2}$ we obtain something that looks like a
constant multiple of Figure \ref{FixedPoint-laplacianofbumponIpic}, which appears as if it could be
assembled from rescaled copies of $u$ according to a rule like
\begin{equation}\label{FixedPoint-PhionIdefn}
    \Phi u = \begin{cases}
        u\Bigl( \frac{2x}{L} \Bigr) &\text{ if $0\leq x\leq \frac{L}{2}$}\\
        -u\Bigl( \frac{2x}{L} -1 \Bigr) &\text{ if $\frac{L}{2}< x\leq L$}\\
        0 &\text{ if $L<x<1-L$}\\
        -u\Bigl( \frac{2x-2}{L} +2 \Bigr) &\text{ if $1-L\leq x < 1-\frac{L}{2}$}\\
        u\Bigl( \frac{2x-2}{L} +1 \Bigr) &\text{ if $1-\frac{L}{2}\leq x\leq1$}
        \end{cases}
    \end{equation}
so that we might hope there is actually a smooth bump function $u$ which has precisely this scaling
behavior.  If we let $G$ denote the Green's operator for the operator $\Delta$ on $I$ with
Dirichlet boundary conditions, then this would be equivalent to asking that $u$ be a fixed point of
the operator
\begin{equation}\label{FixedPoint-defnofoperatorPsiforI}
    \Psi u(x) = \frac{G\circ\Phi u(x)}{G\circ\Phi u(1/2)}
    \end{equation}
\begin{figure}[htb]
\centerline{
\begin{picture}(264,120)(0,-60)
\put(0,0){\vector(1,0){264}} \put(0,-60){\vector(0,1){120}} \put(-2,50){\line(1,0){4}}
\put(-8,50){$1$} \put(-2,-50){\line(1,0){4}} \put(-16,-50){${-}1$}
\put(-8,-4){$0$} \put(256,-2){\line(0,1){4}} \put(256,-10){$1$} \put(64,-2){\line(0,1){4}}
\put(64,3){$L$} \put(192,-2){\line(0,1){4}} \put(192,3){$1-L$}
\thicklines \put(64,0){\line(1,0){128}} \multiput(0,0)(224,0){2}{\put(0,0){
\qbezier(0,0)(2,0)(4,25) \qbezier(8,50)(6,50)(4,25)} \put(32,0){ \qbezier(0,0)(-2,0)(-4,25)
\qbezier(-8,50)(-6,50)(-4,25)} \put(8,50){\line(1,0){16}} } \multiput(0,0)(160,0){2}{\put(32,0){
\qbezier(0,0)(2,0)(4,-25) \qbezier(8,-50)(6,-50)(4,-25)} \put(64,0){ \qbezier(0,0)(-2,0)(-4,-25)
\qbezier(-8,-50)(-6,-50)(-4,-25)} \put(40,-50){\line(1,0){16}} }
\end{picture}}
\caption{The function $\Delta u =  d^{2}u/dx^{2} = \Phi(u)$.}
\label{FixedPoint-laplacianofbumponIpic}
\end{figure}
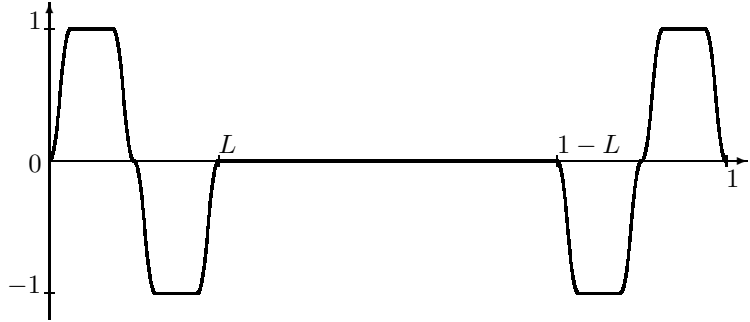
It is a consequence of our general result Theorem \ref{FixedPoint-smoothbumpexiststheorem} that the
operator $\Psi$ in \eqref{FixedPoint-defnofoperatorPsiforI} has a fixed point and that the fixed
point is a smooth bump function.  In fact more is true in the special case of $I$, where the fact
that removing any interior point disconnects the set, along with the existence of an explicit
formula for the Green's function, allows us to prove that the fixed point has values in $[0,1]$ and
is identically $1$ on $[L,1-L]$. For reasons of brevity we do not include the proof of this result;
it is a simpler version of the proof of Theorem \ref{FixedPoint-smoothbumpexiststheorem}.
\begin{proposition}
    If $L$ is sufficiently small then the operator $\Psi$ preserves the
    space of continuous functions on $I$ that have values in $[0,1]$, vanish at $0$ and $1$,
    and are identically $1$ on $[L,1-L]$.  Furthermore $\Psi$ is a contraction in the $L^{\infty}$ norm on these functions,
    and its fixed point is a smooth function that vanishes to infinite order at $0$ and $1$.
    \end{proposition}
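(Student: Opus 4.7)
My plan is to invoke Banach's contraction mapping theorem on the closed convex subset $\mathcal{C}\subset C(I)$ of continuous $[0,1]$-valued functions vanishing at $\{0,1\}$ and identically $1$ on $[L,1-L]$, equipped with the $L^{\infty}$ norm. This breaks into three parts: showing $\Psi(\mathcal{C})\subset\mathcal{C}$; showing $\Psi$ is a strict contraction for $L$ small; and verifying that the resulting fixed point is smooth and vanishes to infinite order at $\{0,1\}$.

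For invariance I would set $v=G\circ\Phi u$ and use $v(x)=\int_{0}^{x}(x-y)\Phi u(y)\,dy+xv'(0)$ with $v'(0)=-\int_{0}^{1}(1-y)\Phi u(y)\,dy$. A short computation piece by piece, exploiting the paired positive/negative lobes of $\Phi u$ on $[0,L]$ and on $[1-L,1]$, gives both $\int_{0}^{L}\Phi u=0$ and $\int_{0}^{1}(1-y)\Phi u(y)\,dy=0$, so $v'(0)=0$, and symmetrically $v'(1)=0$. Since $v''=\Phi u$ vanishes on $[L,1-L]$ and $v'(L)=v'(0)+\int_{0}^{L}\Phi u=0$, $v$ is constant on the plateau with value $v(1/2)=(L^{2}/4)\int_{0}^{1}u(t)\,dt>0$. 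The sign pattern of $\Phi u$ (nonnegative on $[0,L/2]$, nonpositive on $[L/2,L]$) together with $v'(0)=v'(L)=0$ forces $v'\geq 0$ on $[0,L]$, so $v$ is monotone from $0$ to $v(L)$ there; a mirror argument handles $[1-L,1]$. Thus $0\leq v\leq v(1/2)$ on $I$, and $\Psi u=v/v(1/2)\in\mathcal{C}$ for every $L\in(0,1/2)$.

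For the contraction, set $\delta u=u_{1}-u_{2}$, $\delta v=G\Phi(\delta u)$, $\delta c=v_{1}(1/2)-v_{2}(1/2)=(L^{2}/4)\int\delta u$. Because $u_{1}$ and $u_{2}$ coincide on $[L,1-L]$, $\delta u$ is supported in $[0,L]\cup[1-L,1]$, a set of measure $2L$. This support restriction is essential: it immediately gives $|\delta c|\leq(L^{3}/2)\|\delta u\|_{\infty}$, and applied to the explicit iterated-integral representation of $\delta v$ from Step 1 it yields $\|\delta v\|_{\infty}\leq K L^{3}\|\delta u\|_{\infty}$ for an absolute constant $K$. Since $c_{i}\geq(L^{2}/4)(1-2L)$, the identity
\[
    \Psi u_{1}-\Psi u_{2}=\frac{\delta v}{c_{1}}-\Psi u_{2}\cdot\frac{\delta c}{c_{1}}
\]
together with $\|\Psi u_{2}\|_{\infty}\leq 1$ produces $\|\Psi u_{1}-\Psi u_{2}\|_{\infty}\leq K'L\|\delta u\|_{\infty}$. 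For $L$ below a threshold $L_{0}$ the operator $\Psi$ is a strict $L^{\infty}$-contraction on $\mathcal{C}$, so Banach's theorem yields a unique fixed point $u^{*}\in\mathcal{C}$ satisfying $c^{*}\Delta u^{*}=\Phi u^{*}$, where $c^{*}:=v^{*}(1/2)>0$.

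It remains to verify smoothness of $u^{*}$ and infinite-order vanishing at $\{0,1\}$. Since $u^{*}\in C(I)$ with $u^{*}(0)=u^{*}(1)=0$, $\Phi u^{*}$ is continuous on $I$, whence $\Delta u^{*}=\Phi u^{*}/c^{*}\in C(I)$ and $u^{*}\in\dom(\Delta)$. The induction step uses the fact that on each of the five pieces the fixed-point equation realises $\Delta u^{*}$ as a rescaled (and possibly sign-flipped) copy of $u^{*}$, so $\Delta^{k+1}u^{*}$ on each piece is $(4/L^{2})^{k}(c^{*})^{-(k+1)}$ times a rescaling of $\Phi u^{*}$; continuity across the junction points $L/2,\,L,\,1-L,\,1-L/2$ at every level of iteration reduces to the vanishing $u^{*}(0)=u^{*}(1)=0$ and $\partial_{n}u^{*}(0)=\partial_{n}u^{*}(1)=0$, the latter being $v'(0)=v'(1)=0$ from Step 1. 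Evaluating the same self-similar formula at the boundary gives $\Delta^{k}u^{*}(0)=\partial_{n}\Delta^{k}u^{*}(0)=0$ for every $k$, hence the infinite-order vanishing. The main obstacle I anticipate is the inductive bookkeeping in this last step, namely verifying that the matching conditions for the Laplacian (continuity plus the normal-derivative sum condition) are preserved at every iterated junction point of the self-similar decomposition; however the paired cancellation built into $\Phi$ makes every required vanishing reduce cleanly to the boundary data at $\{0,1\}$ already obtained.
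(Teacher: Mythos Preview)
Your argument is correct. The paper actually omits the proof of this proposition entirely, saying only that ``it is a simpler version of the proof of Theorem~3.6'' and noting that on the interval ``the fact that removing any interior point disconnects the set, along with the existence of an explicit formula for the Green's function, allows us to prove that the fixed point has values in $[0,1]$ and is identically $1$ on $[L,1-L]$.'' Your proof is exactly the direct computation the paper alludes to: you use the explicit representation $v(x)=\int_{0}^{x}(x-y)\Phi u(y)\,dy$ together with the cancellations $\int_{0}^{L}\Phi u=0$ and $\int_{0}^{1}(1-y)\Phi u(y)\,dy=0$ to get $v'(0)=v'(L)=0$, then the sign pattern of $\Phi u$ and one-dimensionality to obtain monotonicity and hence the $[0,1]$-range and plateau.

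One small difference worth noting: the general Theorem~3.6 in the paper establishes an $L^{1}$ contraction on the larger class $\{u:\|u-1\|_{1}\le\tfrac12\}$ and then bootstraps to uniform convergence, whereas you work directly in $L^{\infty}$ on the smaller class $\mathcal{C}$. Your route is the natural one here because the key observation that $\delta u=u_{1}-u_{2}$ is supported on a set of measure $2L$ (since both functions share the plateau) is what produces the extra factor of $L$ needed to beat the $L^{2}/4$ in the denominator; this shortcut is unavailable in the general p.c.f.\ setting, which is why the paper's Theorem~3.6 takes the $L^{1}$ route instead. Your smoothness induction, reducing all matching conditions at the interior junctions to the already-established vanishing of $\Delta^{k}u^{*}$ and $\partial_{n}\Delta^{k}u^{*}$ at $\{0,1\}$, is the same mechanism used at the end of the proof of Theorem~3.6.
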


Another example in which we can define operators $\Phi$ and $\Psi$ that are similar to
\eqref{FixedPoint-PhionIdefn} and \eqref{FixedPoint-defnofoperatorPsiforI} is the Sierpinski gasket
$SG$ with its standard harmonic structure and measure, where for sufficiently large $l$ we can set
\begin{equation}\label{FixedPoint-defnofPhiforSG}
    \Phi u (x)
    = \begin{cases}
    2u\big(F_{i}^{-(l+1)}(x)\bigl)                   & \text{ if $x\in F_{i}^{ (l+1)}(SG)$}\\
    - u\bigl(F_{j}^{-1}\circ F_{i}^{-l}(x)\bigr)   & \text{ if $x\in F_{i}^{l}\circ F_{j}(SG)$, $j\neq i$}\\
    0                                                     & \text{otherwise}
    \end{cases}
    \end{equation}
and with $p$  any vertex from $V_{1}$ let
\begin{equation}\label{FixedPoint-defnofoperatorPsiforSG}
    \Psi u=\frac{G\circ\Phi u(x)}{G\circ\Phi u(p)}
    \end{equation}
as illustrated in Figure \ref{FixedPoint-phifforSGpic} for the case $l=2$.
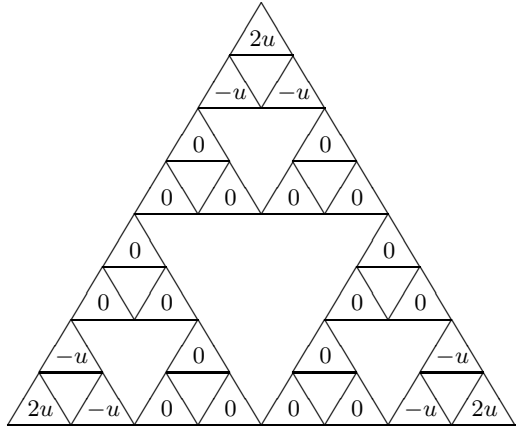
\begin{figure}[htb]
\centerline{
\begin{picture}(192,175)(0,0)
    \setlength{\unitlength}{.5pt}
    \Spic{8}{8}{0}{0}
    \small
    \put(0,0){ \put(15,7){$2u$} \put(60,7){${-}u$} \put(36,47){${-}u$} }
    \put(288,0){ \put(63,7){$2u$} \put(12,7){${-}u$}\put(36,47){${-}u$} }
    \put(144,240){ \put(39,47){$2u$} \put(12,7){${-}u$} \put(60,7){${-}u$} }
    \multiput(96,0)(96,0){2}{ \put(20,7){$0$} \put(44,47){$0$} \put(68,7){$0$} }
    \multiput(96,160)(96,0){2}{ \put(20,7){$0$} \put(44,47){$0$} \put(68,7){$0$} }
    \multiput(48,80)(192,0){2}{ \put(20,7){$0$} \put(44,47){$0$} \put(68,7){$0$} }
\end{picture}}
\caption{$\Phi u$ in the case $l=2$.}\label{FixedPoint-phifforSGpic}
\end{figure}
Again we omit the variant of the proof of Theorem \ref{FixedPoint-smoothbumpexiststheorem} that
establishes the following result
\begin{proposition}
    The operator $\Psi$ of \eqref{FixedPoint-defnofoperatorPsiforSG} is an $L^{\infty}$-contractive self-map of the set of functions that are continuous on $SG$,
vanish at the boundary, are identically $1$ on $SG\setminus\cup_{i} F_{i}^{l}$, and satisfy
$\bigl|\int u -1 \bigr|\leq \frac{1}{2}$. The fixed point of $\,\Psi$ is a smooth bump function.
\end{proposition}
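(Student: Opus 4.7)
The plan is to verify three properties: that $\Psi$ maps the specified set $\mathcal{S}$ into itself (it suffices to work within the closed $S_3$-symmetric subset $\mathcal{S}^{S_3}\subset\mathcal{S}$, on which $\Psi$ is well-behaved because $\Phi$ and $G$ commute with the dihedral action on $SG$), that $\Psi$ is an $L^\infty$-contraction on this subset, and that the resulting fixed point lies in $\dom(\Delta^\infty)$ and vanishes to infinite order at $V_0$.

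The heart of the self-map step is to show that $G\Phi u$ is constant on the complement $SG\setminus\cup_i F_i^l(SG)$; since $p\in V_1\setminus V_0$ lies in that region and $\Psi u(p)=1$ by construction, this forces $\Psi u\equiv 1$ there. For $x$ in the complement, $y\mapsto g(x,y)$ is harmonic on each corner cell $F_i^l(SG)$, and its pullback to $SG$ via $F_i^l$ is the harmonic function with boundary values $(0,A_j(x),A_k(x))$, where $A_m(x)=g(x,F_i^l(q_m))$. Setting $B_iu=\Phi u\circ F_i^l$, the piecewise definition of $\Phi$ gives $\int B_iu\,d\mu=0$ because the single $2u$ piece on $F_i(SG)$ exactly cancels the two $-u$ pieces on the other subcells, so only the mean-zero part of the harmonic pullback contributes. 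The reflection symmetry swapping the indices $j,k$ then identifies the resulting integral as $c_1(u)(A_j(x)+A_k(x))$ with a single coefficient $c_1(u)$ independent of $i$ thanks to the $S_3$-symmetry of $u$. Summing over $i$ yields $G\Phi u(x)=-c_1(u)\cdot 3^{-l}S(x)$ where $S(x)=\sum_{i\neq j}g(x,F_i^l(q_j))$; the $S_3$-invariance of $g$ and the transitivity of $S_3$ on the six inner vertices $F_i^l(q_j)$ show that $S$ takes the same value at each of them. As $S$ is harmonic on the complement with constant boundary values, the maximum principle on the fractal sub-region forces $S$, and hence $G\Phi u$, to be constant. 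Continuity of $\Psi u$ and the vanishing $\Psi u|_{V_0}=0$ are immediate from the continuity of $g$ and the Dirichlet conditions in $G$, and the bound $|\int\Psi u-1|\leq\tfrac{1}{2}$ follows from direct estimates of the corner-cell integrals using $|\int u-1|\leq\tfrac{1}{2}$.

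For contraction, the identity
\[
\Psi u-\Psi v=\frac{G\Phi(u-v)}{G\Phi u(p)}-\Psi v\cdot\frac{G\Phi(u-v)(p)}{G\Phi u(p)},
\]
together with $\|G\Phi(u-v)\|_\infty\leq C\cdot 3^{-l}\|u-v\|_\infty$ (since $\Phi(u-v)$ is supported on a set of measure $3^{1-l}$) and a uniform lower bound on $|G\Phi u(p)|$ coming from the explicit constancy formula and the integral constraint on $u$, gives a strict $L^\infty$-contraction for $l$ sufficiently large. Banach's theorem then produces the unique fixed point $u^*\in\mathcal{S}^{S_3}$.

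Finally, from $u^*=G\Phi u^*/G\Phi u^*(p)$ we obtain $\Delta u^*=c\,\Phi u^*$ with $c=1/G\Phi u^*(p)$; iterating, each $\Delta^k u^*$ is a finite linear combination of compositions $u^*\circ F_w^{-1}$ for words $w$ whose length grows with $k$, hence continuous, with supports shrinking dyadically toward $V_0$. The matching conditions at interior junction points then hold automatically, and both $\Delta^k u^*$ and $\partial_n\Delta^k u^*$ vanish at every $q\in V_0$, so $u^*\in\dom(\Delta^\infty)$ and vanishes to infinite order at the boundary. The main obstacle is the constancy step above, where the mean-zero property of $\Phi u$ on each $l$-cell, the $S_3$-symmetry collapsing the Green's-function integrals to a single coefficient, and the maximum principle on a fractal sub-region combine essentially.
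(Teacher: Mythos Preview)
The paper omits this proof entirely, saying only that it is ``a variant of the proof of Theorem~\ref{FixedPoint-smoothbumpexiststheorem}.''  That argument proceeds via Green's-function oscillation estimates (Lemma~\ref{FixedPoint-changingmeasurelemma}) and an $L^{1}$-contraction, without any symmetry hypothesis on $u$; your approach is quite different, relying on the dihedral symmetry of $SG$ together with the mean-zero structure of $\Phi u$ on each corner cell.

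There is a genuine gap, however.  The proposition asserts that $\Psi$ is a self-map of the \emph{full} set $\mathcal{S}$, yet you pass at once to the $S_{3}$-symmetric subset $\mathcal{S}^{S_{3}}$, claiming without justification that this suffices.  It does not suffice for the self-map statement.  Your argument that $G\Phi u$ is constant on the complement uses the symmetry of $u$ essentially: first to make the coefficient $c_{1}(u)$ independent of the corner index $i$, and then to invoke $S_{3}$-transitivity on the six inner vertices $F_{i}^{l}(q_{j})$ so that the harmonic function $\sum_{i\neq j} g(\cdot,F_{i}^{l}(q_{j}))$ takes a single boundary value.  For a general $u\in\mathcal{S}$, the same computation gives instead
\[
G\Phi u(x)=c\sum_{i\neq j}(I_{i}+I_{j})\,g\bigl(x,F_{i}^{l}(q_{j})\bigr),\qquad I_{m}=\int h_{m}u\,d\mu,
\]
and evaluating at the six boundary points produces differences proportional to $(I_{2}-I_{3})$ and $(I_{1}-I_{2})$ times combinations of Green's-function values that you have not shown to vanish.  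Unless you supply such identities (or give a separate argument in the spirit of Lemma~\ref{FixedPoint-changingmeasurelemma}), the claim $\Psi u\equiv 1$ on $SG\setminus\cup_{i}F_{i}^{l}$ is unproved for non-symmetric $u$, and with it the self-map property.

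Your argument is correct on $\mathcal{S}^{S_{3}}$ and does yield a smooth $S_{3}$-symmetric bump as the fixed point, so the essential conclusion survives; but as a proof of the proposition as stated it is incomplete.  A secondary point: in the contraction step, both $\lVert G\Phi(u-v)\rVert_{\infty}$ and the lower bound on $|G\Phi u(p)|$ carry the same power of the small scale, so to obtain a contraction constant below $1$ you must exploit that $u-v$ vanishes on the complement (hence $\lVert u-v\rVert_{1}\leq 3^{1-l}\lVert u-v\rVert_{\infty}$); this should be made explicit.
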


The method described for $I$ and $SG$ rely heavily on the symmetry of these sets and on the
assumption that they are endowed with the symmetrical harmonic structures and measure.  This
assumption is unavoidable if we want to use the same operation $\Phi$ at all steps of the
computation, as the natural linear combination of rescaled copies of the function will not
otherwise have the desired properties, but it is very restrictive.  Even some of the simplest of
the nested fractals defined by Lindstr\o m \cite{MR988082} have insufficient symmetry for a fixed
$\Phi$ to be used in the construction of a smooth bump by this method.  Nonetheless the method can
be adapted to general p.c.f. self-similar fractals with regular harmonic structure and self-similar
measure.

Let $X$ be p.c.f. self-similar with boundary $V_{0}=\partial X$, measure $\mu$ that is self-similar
with scaling factors $\mu_{j}$ and regular harmonic structure with factors $r_j$. We fix a scale
$l_{1}$ with size to be determined later, and label the boundary $l_{1}$ cells by
$Y_{j}=F_{j}^{l_{1}}(X)$. Their union is $Y=\cup Y_{j}$.   For any $\epsilon>0$ we will build a
smooth function that satisfies $|u-1|\leq \epsilon$ on $X\setminus Y$ by a construction that
inductively determines its Laplacian on the cells $Y_{j}$, writing it as a fixed point of an
operator $\Psi$ on the following space of functions.
\begin{definition}
Let $\mathcal{C}$ be the space of continuous functions $u$ on $X$ such that $u(q)=0$ for
$q\in\partial X$ and $\|u-1\|_{1}\leq \frac{1}{2}$.  Note that this space is non-empty and closed
in the continuous functions with supremum norm.
\end{definition}
To define the operator $\Psi$  we need a little more notation. Let $S\subset V_{l_{1}}$ consist of
those points that lie in some $Y_{j}$ and in at least one other $l_{1}$-cell.  If $l_{1}$ is
sufficiently large then no two of the $Y_{j}$ can intersect; we assume this and see that the
connected components of $X\setminus S$ are the cells $Y_{j}$ (less points of $S$) and the set
$X\setminus Y$. Label those boundary points of the cell $Y_{j}$ at which $Y_{j}$ intersects another
$l_{1}$ cell by $x_{i,j}$ for $i=1,\dotsc, I_{j}$. Fixing a second scale $l_{2}$, also with size to
be determined, we associate to each $x_{i,j}$ the unique $(l_{1}+l_{2})$-cell in $Y_j$ containing
$x_{i,j}$, calling it $Z_{i,j}$.  We also set $Z_{0,j}=F_{j}^{l_{1}+l_{2}}(X)$, so it is the
$(l_{1}+l_{2})$-cell in $Y_{j}$ that contains $q_{j}\in V_{0}$, and define $w_{i,j}$ to be the word
such that $F_{i,j}(X)=F_{w_{i,j}}(X)=Z_{i,j}$. Figure \ref{FixedPoint-notationpic} illustrates our
labelling conventions in the case $X=SG$, $l_{1}=2$ and $l_{2}=1$.
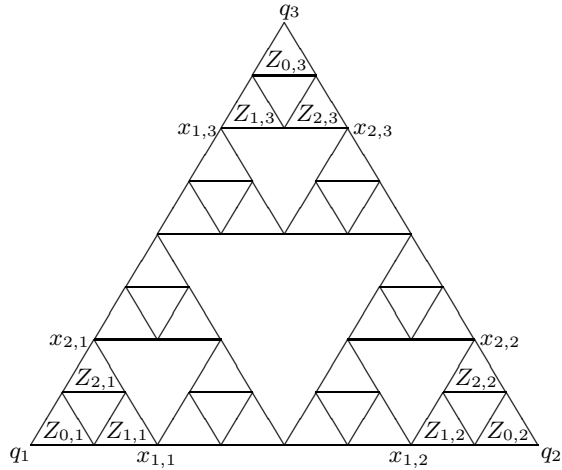
\begin{figure}[htb]
\centerline{
\begin{picture}(192,180)(0,0)
    \setlength{\unitlength}{.5pt}
    \put(0,10){\Spic{8}{8}{0}{0}}
    \small
    \put(-4,9){ \put(13,7){$Z_{0,1}$} \put(61,7){$Z_{1,1}$} \put(37,47){$Z_{2,1}$} \put(84,-12){$x_{1,1}$} \put(18,77){$x_{2,1}$} }%
    \put(358,9){ \put(-13,7){$Z_{0,2}$} \put(-61,7){$Z_{1,2}$} \put(-37,47){$Z_{2,2}$} \put(-87,-12){$x_{1,2}$} \put(-18,77){$x_{2,2}$} }%
    \put(138,249){ \put(15,7){$Z_{1,3}$} \put(63,7){$Z_{2,3}$} \put(39,47){$Z_{0,3}$} \put(-27,-3){$x_{1,3}$} \put(106,-3){$x_{2,3}$} }%
    \put(-16,0){$q_{1}$}%
    \put(386,0){$q_{2}$}%
    \put(188,336){$q_{3}$}%
\end{picture}}
\caption{Notation if $X=SG$, $l_{1}=2$ and $l_{2}=1$.}\label{FixedPoint-notationpic}
\end{figure}
We identify a particular function that is in $\mathcal{C}$ when $l_{1}$ is large enough.  Let $f$
be the piecewise harmonic function on $X$ with values $f(x_{i,j})=1$ for $i=1,\dotsc,I_{j}$ and
$j=1,\dotsc,N$ but $f(x_{0,j})=0$ for all $j$. It is clear that $f$ is continuous, identically $1$
on $X\setminus Y$ and harmonic on each of the sets $Y_{j}$. It fails to be harmonic only at the
points $x_{i,j}$ with $i\geq 1$, and we readily compute that the Laplacian of $f$ is a measure
supported at these points. In fact if $\delta_{x}$ denotes the Dirac mass at $x$ then
\begin{equation}\label{FixedPoint-laplacianoffunctionf}
    \Delta f
    = \sum_{j=1}^{N}\sum_{i=1}^{I_{j}} a_{i,j} \delta_{x_{i,j}}
    = -\sum_{j=1}^{N}\sum_{i=1}^{I_{j}} \partial_{n}f_{j}(x_{i,j}) \delta_{x_{i,j}}
    \end{equation}
with the second equality coming from the matching conditions for the Laplacian.  Since $f_{j}\circ
F_{j}^{l_{1}}$ is harmonic on $X$ with boundary values $0$ at $q_{j}$ and $1$ at all $q_{i}$,
$i\neq j$, it has normal derivatives bounded by a constant $C(r)$ that may be chosen so as to
depend only on the harmonic structure.  Scaling of the normal derivatives under $F_{j}^{l_{1}}$
therefore implies $|a_{i,j}|=|\partial_{n}f_{j}(x_{i,j})|\leq C(r) r_{j}^{-l_{1}}$.

The smooth bump function we seek will actually be a perturbation of $f$, constructed by iteratively
replacing the Dirac masses in \eqref{FixedPoint-laplacianoffunctionf} by rescaled copies of the
stage $k$ bump, correcting for the boundary normal derivatives, and applying the Dirichlet Green's
operator to obtain the stage $k+1$ bump. We will see that each stage gains one order of smoothness,
so the limiting function will be in $\dom(\Delta^{\infty})$. Our first step is to estimate the
effect of a perturbation of the type described.
\begin{lemma}\label{FixedPoint-changingmeasurelemma}
For each $j=1,\dotsc,N$ and $i=1,\dotsc,I_{j}$ let $\nu_{i,j}$ be a finite, signed Borel measure
with support in $Z_{i,j}$.  If we use the coefficients in \eqref{FixedPoint-laplacianoffunctionf}
to define
\begin{equation*}
    \nu = \sum_{j=1}^{N}\sum_{i=1}^{I_{j}} a_{i,j} \nu_{i,j}
    \end{equation*}
and let $u=G(\nu)$ be the result of applying the Dirichlet Green's operator, then
\begin{equation}\label{FixedPoint-estimatefromijtermsonallX}
    | u(x)| \leq  C(r) N^{2}  \sup_{i,j} \| \nu_{i,j} \| \quad\text{for all $x\in X$}
    \end{equation}
where $\|\nu_{i,j}\|$ is the total variation of $\nu_{i,j}$.  If in addition we have $\int
\nu_{i,j}=0$ for all $i$ and $j$ then
\begin{equation}\label{FixedPoint-estimatefromijtermsonXlessY}
    | u(x)| \leq  C(r) N \Bigl( \sup_{i,j}\|\nu_{i,j}\| \Bigr) \biggl( \sum_{k=1}^{N}
    r_{k}^{l_{2}} \biggr) \quad\text{for all $x\in X\setminus Y$}
    \end{equation}
\end{lemma}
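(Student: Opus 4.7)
The plan is to bound $u=G(\nu)$ by exploiting the self-similar scaling of the Dirichlet Green's function on cells together with the combinatorial structure of the coefficients $a_{i,j}$.

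The central ingredient---and the main technical obstacle---is the preliminary estimate that for all $x\in X$ and $y\in Y_j$,
\begin{equation*}
|g(x,y)|\leq C(r)\,r_j^{l_1},
\end{equation*}
where $C(r)$ depends only on the harmonic structure. This is exactly the bound that cancels the $r_j^{-l_1}$ factor in $|a_{i,j}|\leq C(r)\,r_j^{-l_1}$. For $x\notin Y_j$, the function $z\mapsto g(x,F_j^{l_1}(z))$ is harmonic on $X$, vanishes at $q_j\in V_0$ (by Dirichlet), and has other boundary values bounded by $\|g\|_\infty$. The harmonic extension estimate recalled in the introduction, applied to the $l_1$-fold composition $A_j^{l_1}$ with second eigenvalue at most $r_j^{l_1}$, forces the oscillation of this function on $V_0$ to be at most $2r_j^{l_1}\|g\|_\infty$; since one value is zero, every other value---and hence $g(x,\cdot)$ on all of $Y_j$, by the maximum principle for harmonic functions---is bounded by $2r_j^{l_1}\|g\|_\infty$. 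For $x\in Y_j$ the analogous bound follows from the series representation \eqref{Introduction-Defnofgreensfn}, by separating the near-diagonal contribution (which scales as $r_j^{l_1}$ through the Green's function on the cell $Y_j$ with Dirichlet boundary conditions) from a harmonic correction that can be estimated as in the case $x\notin Y_j$.

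Next I would set up the change of variables. Since $x_{i,j}=F_j^{l_1}(q_{k(i,j)})$ for a unique index $k(i,j)\neq j$, the cell $Z_{i,j}=F_j^{l_1}(F_{k(i,j)}^{l_2}(X))$ has $w_{i,j}=j^{l_1}k(i,j)^{l_2}$ and $r_{w_{i,j}}=r_j^{l_1}r_{k(i,j)}^{l_2}$. Pulling back $y=F_j^{l_1}(z)$ yields
\begin{equation*}
\int_{Z_{i,j}}g(x,y)\,d\nu_{i,j}(y)=\int_{F_{k(i,j)}^{l_2}(X)}g(x,F_j^{l_1}(z))\,d\tilde\nu_{i,j}(z),
\end{equation*}
where the pullback $\tilde\nu_{i,j}$ is supported in the $l_2$-cell $F_{k(i,j)}^{l_2}(X)$ and satisfies $\|\tilde\nu_{i,j}\|=\|\nu_{i,j}\|$ and $\int d\tilde\nu_{i,j}=\int d\nu_{i,j}$.

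For \eqref{FixedPoint-estimatefromijtermsonallX}, the preliminary estimate gives $\bigl|\int g(x,y)\,d\nu_{i,j}(y)\bigr|\leq C(r)r_j^{l_1}\|\nu_{i,j}\|$; multiplying by $|a_{i,j}|\leq C(r)r_j^{-l_1}$ cancels the $r_j^{\pm l_1}$ factors to give a per-term bound of $C(r)\|\nu_{i,j}\|$, and summing over the at most $N\cdot I_{\max}\leq N^2$ pairs $(i,j)$ produces the claim. For \eqref{FixedPoint-estimatefromijtermsonXlessY}, the hypothesis $x\in X\setminus Y$ means $x\notin Y_j$ for every $j$, so $h(z)=g(x,F_j^{l_1}(z))$ is harmonic on all of $X$ with $\|h\|_\infty\leq C(r)r_j^{l_1}$ by the preliminary estimate. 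The harmonic oscillation estimate then gives oscillation at most $r_{k(i,j)}^{l_2}\cdot 2C(r)r_j^{l_1}$ on the $l_2$-cell $F_{k(i,j)}^{l_2}(X)$. Using $\int d\tilde\nu_{i,j}=0$ to subtract a constant from the integrand,
\begin{equation*}
\biggl|\int g(x,F_j^{l_1}(z))\,d\tilde\nu_{i,j}(z)\biggr|\leq C(r)r_j^{l_1}r_{k(i,j)}^{l_2}\|\nu_{i,j}\|.
\end{equation*}
Multiplying by $|a_{i,j}|\leq C(r)r_j^{-l_1}$ yields a per-term bound of $C(r)r_{k(i,j)}^{l_2}\|\nu_{i,j}\|$. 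For each fixed $j$ the indices $k(i,j)$ are distinct elements of $\{1,\dotsc,N\}\setminus\{j\}$, so $\sum_i r_{k(i,j)}^{l_2}\leq\sum_{k=1}^N r_k^{l_2}$; summing over the $N$ values of $j$ completes the estimate.

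The hard part is the preliminary bound when $x\in Y_j$, where one loses the harmonicity of $g(x,\cdot)$ on $Y_j$ and must instead extract the $r_j^{l_1}$ scaling directly from the singular diagonal term in the series representation of $g$, combined with a harmonic correction controlled by the already-established case $x\notin Y_j$.
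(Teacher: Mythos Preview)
Your overall strategy is sound and close to the paper's, but the argument you give for the preliminary estimate $|g(x,y)|\leq C(r)\,r_j^{l_1}$ (for $y\in Y_j$) in the case $x\notin Y_j$ does not work as written. You claim that the harmonic extension estimate, applied to $A_j^{l_1}$, forces the oscillation of $h(z)=g(x,F_j^{l_1}(z))$ on $V_0$ to be at most $2r_j^{l_1}\|g\|_\infty$. But the boundary values $h(q_k)=g(x,F_j^{l_1}(q_k))$ are values of $g(x,\cdot)$ at the junction points $x_{i,j}$, and to obtain these from $g(x,\cdot)|_{V_0}=0$ via $A_j^{l_1}$ you would need $g(x,\cdot)$ to be harmonic on \emph{all of $X$}, which fails because of the singularity at $x$. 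Harmonicity of $h$ on $X$ only lets you control $h$ in the interior from its boundary values, not the reverse. Your case distinction is therefore inverted: the case $x\notin Y_j$ is not easier than $x\in Y_j$; both genuinely require the series representation \eqref{Introduction-Defnofgreensfn} (or an equivalent resistance argument, e.g.\ $g(x,y)\leq g(y,y)=R(y,V_0)\leq C(r)r_j^{l_1}$ for $y\in Y_j$).

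The paper handles this uniformly: it simply reads both the bound $|g(x,y)|\leq C(r)\,r_j^{l_1}$ on $Y_j$ and the oscillation bound $\mathrm{osc}_{Z_{i,j}}g(x,\cdot)\leq C(r)\,r_{w_{i,j}}$ directly from \eqref{Introduction-Defnofgreensfn}, with no case analysis on $x$. Once you accept the preliminary bound from the series, the remainder of your argument is correct. In particular your treatment of \eqref{FixedPoint-estimatefromijtermsonXlessY}---using harmonicity of $g(x,\cdot)$ on $Y_j$ for every $x\in X\setminus Y$, pulling back, and applying the harmonic oscillation estimate on the $l_2$-subcell $F_{k(i,j)}^{l_2}(X)$---is a clean variant of the paper's route, which instead invokes the maximum principle to reduce to the finitely many points $x_{i',j'}$ and then applies the oscillation bound on $Z_{i,j}$ taken from the series.
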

\begin{proof}
Recall that $G$ may be represented as integration against the continuous kernel $-g(x,y)$, with
sign chosen so $g(x,y)\geq0$.  The estimates we desire follow from
\eqref{Introduction-Defnofgreensfn} and the fact that $|a_{i,j}|\leq C(r)r_{j}^{-l_{1}}$.  The
former ensures both that $|g(x,y)|\leq C(r) r_{j}^{l_{1}}$ on each $Y_{j}$ and that the oscillation
of $g(x,y)$ on $Z_{i,j}$ is at most $C(r)r_{w_{i,j}}$. We compute
\begin{align*}
    |u(x)|
    &= \biggl| \int_{X} g(x,y) d\nu(y) \biggr|\\
    &\leq \sum_{j=1}^{N} \sum_{i=1}^{I_{j}} |a_{i,j}| \int_{Z_{i,j}} |g(x,y)| d\nu_{i,j}(y) \\
    &\leq \sum_{j=1}^{N}\sum_{i=1}^{I_{j}} C(r) r_{j}^{-l_{1}} \int_{Z_{i,j}} C(r)r_{j}^{l_{1}} d|\nu_{i,j}(y)|\\
    &\leq C(r) N^{2} \sup_{i,j} \|\nu_{i,j}\|
    \end{align*}
which establishes the first inequality.  To obtain the second we observe that $\Delta u=0$ on
$X\setminus Y$, so by the maximum principle we need only verify the inequality at the points
$x_{i,j}$. Fix such a point $x_{i^{\prime},j^{\prime}}$, and use that each $\int d\nu_{i,j}=0$ to
subtract the appropriate constant from each integrand before estimating:
\begin{align*}
    |u(x_{i^{\prime},j^{\prime}})|
    &= \biggl| \int_{X} g(x_{i^{\prime},j^{\prime}},y) d\nu(y) \biggr|\\
    &= \biggl| \sum_{j=1}^{N} \sum_{i=1}^{I_{j}} a_{i,j} \int_{Z_{i,j}} g(x_{i^{\prime},j^{\prime}},y) d\nu_{i,j}(y)
    \biggr|\\
    &= \biggl| \sum_{j=1}^{N} \sum_{i=1}^{I_{j}} a_{i,j} \int_{Z_{i,j}}
    \Bigl( g(x_{i^{\prime},j^{\prime}},y)-g(x_{i^{\prime},j^{\prime}},x_{i,j}) \Bigr) d\nu_{i,j}(y)  \biggr|\\
    &\leq \sum_{j=1}^{N}\sum_{i=1}^{I_{j}} C(r) r_{j}^{-l_{1}} \int_{Z_{i,j}} C(r)r_{w_{i,j}} d|\nu_{i,j}|\\
    &\leq C(r) \sum_{j=1}^{N}\sum_{i=1}^{I_{j}} r_{k(i,j)}^{l_{2}} \|\nu_{i,j}\|
    \end{align*}
because $r_{w_{i,j}}=r_{j}^{l_{1}}r_{k}^{l_{2}}$ for some $k=k(i,j)$.  Each $k(i,j)$ occurs at most
once for a fixed $j$, so we obtain
\begin{equation*}
    |u(x_{i^{\prime},j^{\prime}})|
    \leq C(r) \Bigl( \sup_{i,j}\|\nu_{i,j}\| \Bigr) \biggl( \sum_{j=1}^{N}\sum_{k=1}^{N}
    r_{k}^{l_{2}} \biggr)
    \leq C(r) N \Bigl( \sup_{i,j}\|\nu_{i,j}\| \Bigr) \biggl( \sum_{k=1}^{N}
    r_{k}^{l_{2}} \biggr)
    \end{equation*}
\end{proof}

There is an analogous but simpler estimate for the effect of introducing a mass supported on one of
the small cells $Z_{0,j}$ at the boundary.  If $\nu_{0,j}$ is a finite, signed Borel measure with
support in $Z_{0,j}$ we use \eqref{Introduction-Defnofgreensfn} to see $|g(x,y)|\leq
C(r)r_{j}^{l_{1}+l_{2}}$ on $Z_{0,j}$ and therefore
\begin{equation}\label{FixedPoint-estimateforintroducingmassonboundarycell}
    \bigl| G(\nu_{0,j} ) \bigr|
    \leq \biggl| \int_{Z_{0,j}} |g(x,y)| d|\nu_{0,j}(y)| \leq C(r) r_{j}^{l_{1}+l_{2}} \|\nu_{0,j}\|
    \end{equation}

The ideas discussed so far allow us to generalize the definition of the operator $\Psi$ in
\eqref{FixedPoint-defnofoperatorPsiforI} and \eqref{FixedPoint-defnofoperatorPsiforSG}.  The idea
is that to replace the Dirac mass terms on the boundary cells $Z_{i,j}$ in
\eqref{FixedPoint-laplacianoffunctionf} with normalized rescaled copies of  $u\in\mathcal{C}$ and
apply the Green's operator to obtain a function that is near $1$ on $X\setminus Y$.  By adding some
terms on the cells $Z_{0,j}$ we can make the result have vanishing normal derivatives at the
boundary without changing the value on $X\setminus Y$ very much.  In consequence we will obtain an
operator that smooths  $u\in\mathcal{C}$ to be in $\mathcal{C}\cap\dom(\Delta)$ with vanishing
normal derivatives and is near $1$ on $X\setminus Y$.  Iterating the operator will then produce a
sequence of smoother and smoother bump functions.

Let
\begin{equation}\label{FixedPoint-defnofuij}
    u_{i,j} (x) = \begin{cases}
    \mu(Z_{i,j})^{-1} \Bigl( \int_{X} u d\mu \Bigr)^{-1} \Bigl( u\circ F_{i,j}^{-1} (x)\Bigr) &\text{if $x\in Z_{i,j}$}\\
    0 &\text{otherwise}
        \end{cases}
    \end{equation}
so that each $u_{i,j}$ is continuous and has integral $1$.  Since $u\in\mathcal{C}$ we also have
that $\int |u_{i,j}| \leq \Bigl(\int u \Bigr)^{-1}\|u\|\leq 3$.
\begin{definition}\label{FixedPoint-defnoffunctionPsidefn}
The operator $\Psi$ on $\mathcal{C}$ is $\Psi u=G(v)$, where
\begin{equation}\label{FixedPoint-defnoffunctionv}
    v(x)
    = \sum_{j=1}^{N} \sum_{i=0}^{I_j} b_{i,j} r_{j}^{-l_{1}} u_{i,j}(x)
    \end{equation}
and $G$ is the Dirichlet Green's operator. In this expression the coefficients for $i\geq 1$ are
given by $b_{i,j}= r_{j}^{l_{1}} a_{i,j}$ with $a_{i,j}$ as in
\eqref{FixedPoint-laplacianoffunctionf}, but the $b_{0,j}$ are yet to be determined.
\end{definition}
Note that $|b_{i,j}|\leq C(r)$ when $i\geq 1$. It is immediate that $G(v)\equiv 0$ on $\partial X$
and is continuous. Moreover $\Delta G(v)=v$ is a linear combination of continuous functions, so
$\Psi u\in\dom(\Delta)$.  The next lemma uses the Gauss-Green formula to reduce finding the correct
$b_{0,j}$ to a problem in linear algebra.
\begin{lemma}\label{FixedPoint-therearebzerojlemma}
If $l_{1}$ and $l_{2}$ are sufficiently large then there are values $b_{0,j}$ such that
$\partial_{n}\Psi u\equiv 0$ on $\partial X$.  The minimal sizes of $l_{1}$ and $l_{2}$ depend only
on the harmonic structure of $X$ and the number of vertices $N$ in $\partial X$.
\end{lemma}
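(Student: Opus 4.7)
The plan is to use the Gauss--Green formula \eqref{Introduction-thegaussgreenformula} to convert the $N$ conditions $\partial_{n}\Psi u(q_{k})=0$ into a linear system of $N$ equations in the $N$ unknowns $b_{0,1},\dotsc,b_{0,N}$, and then to show that the associated matrix is invertible because it is a small perturbation (after rescaling rows by $r_{j}^{l_{1}}$) of the identity, the smallness being controlled by the oscillation bound $r_{j}^{l_{1}+l_{2}}$ for harmonic functions on the cells $Z_{0,j}$.

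\emph{Step 1: reduce to integrals against harmonic functions.} For each $k$ let $h_{k}$ be the harmonic function on $X$ with $h_{k}(q_{j})=\delta_{jk}$. Since $\Psi u= G(v)$ has $\Psi u|_{\partial X}=0$ and $\Delta(\Psi u)=v$ and since $\Delta h_{k}=0$, applying \eqref{Introduction-thegaussgreenformula} with $u=\Psi u$ and $v=h_{k}$ gives
\begin{equation*}
    \partial_{n}(\Psi u)(q_{k}) = \int_{X} h_{k}\,v\,d\mu.
\end{equation*}
Expanding $v$ via \eqref{FixedPoint-defnofoperatorPsidefn}, the condition $\partial_{n}(\Psi u)(q_{k})=0$ for $k=1,\dotsc,N$ becomes the linear system
\begin{equation*}
    \sum_{j=1}^{N} b_{0,j}\, r_{j}^{-l_{1}} \!\int h_{k}\,u_{0,j}\,d\mu
    = -\sum_{j=1}^{N}\sum_{i=1}^{I_{j}} b_{i,j}\, r_{j}^{-l_{1}} \!\int h_{k}\,u_{i,j}\,d\mu,
\end{equation*}
whose right--hand side is determined because the $b_{i,j}$ for $i\geq 1$ are already fixed.

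\emph{Step 2: show the coefficient matrix is invertible.} Write $M_{k,j}= r_{j}^{-l_{1}} \int h_{k} u_{0,j}\,d\mu$. Each $u_{0,j}$ is a probability density supported on the $(l_{1}+l_{2})$-cell $Z_{0,j}=F_{j}^{l_{1}+l_{2}}(X)$, which contains $q_{j}$, and the oscillation estimate from Kigami's theory (recalled in the introduction) gives that the oscillation of $h_{k}$ on $Z_{0,j}$ is at most $r_{w_{0,j}}= r_{j}^{l_{1}+l_{2}}$. Therefore
\begin{equation*}
    \int h_{k}\,u_{0,j}\,d\mu = h_{k}(q_{j}) + O\bigl(r_{j}^{l_{1}+l_{2}}\bigr) = \delta_{kj} + O\bigl(r_{j}^{l_{1}+l_{2}}\bigr),
\end{equation*}
so $M_{k,j}= r_{j}^{-l_{1}}\delta_{kj}+ O\bigl(r_{j}^{l_{2}}\bigr)$. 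Factoring out the diagonal matrix $D=\operatorname{diag}(r_{j}^{-l_{1}})$, we find $MD^{-1}=I+E$ where the entries of $E$ are $O(r_{j}^{l_{1}+l_{2}})$. Choosing $l_{1}$ and $l_{2}$ large enough (depending only on the $r_{j}$, hence on the harmonic structure, and on $N$ through the matrix size) makes $\|E\|<1$ in any fixed norm; by a Neumann series $M$ is invertible, so the system has a unique solution $(b_{0,1},\dotsc,b_{0,N})$.

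\emph{Step 3 and the main obstacle.} The existence of the $b_{0,j}$ then follows at once. The main obstacle is not the inversion itself, but keeping careful track of the two independent scaling factors $r_{j}^{-l_{1}}$ (arising from $\Delta f$ concentrating mass of order $r_{j}^{-l_{1}}$ near each $x_{i,j}$) and $r_{j}^{l_{2}}$ (the oscillation smallness on the boundary sub-cells $Z_{0,j}$), and in particular verifying that the off-diagonal comparison between $r_{j}^{l_{2}}$ and the diagonal $r_{j}^{-l_{1}}$ in the $j$th column remains small uniformly in $j$. Once this is done, the conclusion that the required sizes of $l_{1}$ and $l_{2}$ depend only on the harmonic structure and on $N$ is built into the estimate on $E$.
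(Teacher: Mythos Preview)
Your proof is correct and follows essentially the same route as the paper: apply Gauss--Green with the harmonic functions $h_{k}$ to turn $\partial_{n}\Psi u(q_{k})=0$ into a linear system, then use the oscillation bound for $h_{k}$ on $Z_{0,j}=F_{j}^{l_{1}+l_{2}}(X)$ to show the coefficient matrix is a small perturbation of a diagonal matrix and hence invertible for large $l_{1}+l_{2}$. The paper's version additionally records perturbation estimates for the matrix $M$ and the vector $A$ under replacement of $u\,d\mu$ by another probability measure, and in particular deduces $|b_{0,j}|\leq C(N,r)$; these bounds are not part of the lemma statement but are used in the subsequent Lemma~\ref{FixedPoint-mainlemma} and Theorem~\ref{FixedPoint-smoothbumpexiststheorem}, so you will need them later.
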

\begin{proof}
Let $h_{j}$ be the function that is harmonic on $X$, equal to $1$ at $q_{j}$ and $0$ at all other
points of $\partial X$. Using $\Delta G(v)=v$, $G(v)\equiv 0$ on $\partial X$, and the Gauss-Green
formula
\begin{align*}
    \int h_{j}(y)v(y)d\mu(y)
    &= \int h_{j}(y)\Delta G(v)(y) d\mu(y)\\
    &= \sum_{q_{k}\in \partial X}  h_{j}(q_{k}) \bigl(\partial_{n}G(v)\bigr)(q_{k})\\
    &= \bigl(\partial_{n}G(v)\bigr)(q_{j})
    \end{align*}
from which $\partial_{n}G(v)\equiv0$ on $\partial X$ is simply
\begin{equation*}
    0 = \int h_{j}(y)v(y)d\mu(y)
    = \sum_{i^{\prime},j^{\prime}} b_{i^{\prime},j^{\prime}} r_{j^{\prime}}^{-l_{1}} \int h_{j}(y) u_{i^{\prime},j^{\prime}}(y)  d\mu(y)
    \end{equation*}
for all $j=1,\dotsc,N$.  Moving the terms depending on the fixed values $b_{i^{\prime},j^{\prime}}$
for $i^{\prime}\geq 1$ this may be reformulated as
\begin{align}
    \sum_{j^{\prime}} b_{0,j^{\prime}} r_{j^{\prime}}^{-l_{1}} \int h_{j}(y) u_{0,j^{\prime}}(y)  d\mu(y)
    &= - \sum_{j^{\prime}=1}^{N}\sum_{i^{\prime}=1}^{I_{j^{\prime}}} b_{i^{\prime},j^{\prime}}r_{j^{\prime}}^{-l_{1}}
    \int h_{j}(y) u_{i^{\prime},j^{\prime}}(y) d\mu(y)  \label{FixedPoint-secondconditoncoeffsforv}
    \end{align}
which we recognize as a matrix equation $\sum_{j^{\prime}} M(u)_{j^{\prime},j}\,
r_{j^{\prime}}^{-l_{1}} b_{0,j^{\prime}} = A(u)_{j}$ with
\begin{gather}
    \bigl( M(ud\mu) \bigr)_{j^{\prime},j}
    = \int h_{j}(y) u_{0,j^{\prime}}(y) d\mu(y) \label{FixedPoint-definitionofM}\\
    (A(ud\mu))_{j}
    = - \sum_{j^{\prime}=1}^{N}\sum_{i^{\prime}=1}^{I_{j^{\prime}}} b_{i^{\prime},j^{\prime}}r_{j^{\prime}}^{-l_{1}}
    \int h_{j}(y) u_{i^{\prime},j^{\prime}}(y) d\mu(y) \label{FixedPoint-definitionofA}
    \end{gather}
It is clear that we need to know $M=M(ud\mu)$ is invertible, but rather than prove this directly we
do so by proving a perturbation estimate similar to Lemma \ref{FixedPoint-changingmeasurelemma}
that will be useful later.  To this end consider replacing each of the measures
$u_{0,j^{\prime}}d\mu$ in \eqref{FixedPoint-definitionofM}  with a copy of a different probability
measure $d\sigma$ scaled and translated to give $d\sigma_{i^{\prime},j^{\prime}}$ supported on
$Z_{i^{\prime},j^{\prime}}$. We call the result $M(d\sigma)$. The difference of these measures has
mass zero, so we can compute an estimate involving the total variation of the measures
\begin{align}
    \Bigl| M(ud\mu-d\sigma)_{j^{\prime},j} \Bigr|
    &= \biggl| \int h_{j}(y) \bigl( u_{0,j^{\prime}}(y) d\mu(y) - d\sigma_{0,j^{\prime}}(y) \bigr)
    \biggr| \notag\\
    &\leq \biggl| \int \bigl( h_{j}(y)- h_{j}(x_{0,j^{\prime}}) \bigr)
        \Bigl( u_{0,j^{\prime}}(y) d\mu(y) - d\sigma_{0,j^{\prime}}(y) \Bigr) \notag \\
    &\leq C(r) r_{j^{\prime}}^{l_{1}+l_{2}} \|ud\mu -d\sigma\| \label{FixedPoint-estimateforM}
    \end{align}
because $h_{j}$ is harmonic and therefore varies by at most $C(r)r_{j^{\prime}}^{l_{1}+l_{2}}$ on
each $Z_{0,j^{\prime}}$.  In particular if the measures $d\sigma_{i^{\prime},j^{\prime}}$ are Dirac
masses at the points $x_{0,j^{\prime}}$ then $M(d\sigma)$ is simply the identity, so
\eqref{FixedPoint-estimateforM} implies
\begin{equation*}
    \bigl|(I-M)_{j^{\prime},j} \bigr|
    \leq C(r) r_{j^{\prime}}^{l_{1}+l_{2}}
    \end{equation*}
from which $M$ is invertible when $l_{1}+l_{2}$ is large, with $\|I- M^{-1}\|\leq
C(N,r)\sum_{j}r_{j}^{l_{1}+l_{2}}$.

A similar perturbation argument can be made for $A(d\mu-d\sigma)$, where $A(d\sigma)$ is obtained
by replacing each  $u_{i^{\prime},j^{\prime}}d\mu$ by $d\sigma_{i^{\prime},j^{\prime}}$ in
\eqref{FixedPoint-definitionofA}. Estimating the integral terms and using the bound
$|b_{i^{\prime},j^{\prime}}|\leq C(r)$ we obtain
\begin{equation}
    \bigl| A(ud\mu-d\sigma)_{j} \bigr|
    \leq C(N,r) \, \Bigl(\sum_{i} r_{i}^{l_{2}}\Bigr) \, \| u d\mu -d\sigma\|
    \label{FixedPoint-estimateofA}
    \end{equation}
however this is not the most useful thing we can do here.  Instead we recognize that the bounds
$|h_{j}(y)- 1|\leq C(r)r_{j}^{l_{1}}$ on $Y_{j}$ and $|h_{j}(y)|\leq r_{j^{\prime}}^{l_{1}}$ on
$Y_{j^{\prime}}$ for $j^{\prime}\neq j$\, ensure
\begin{equation*}
    \biggl| A(ud\mu)_{j} + \sum_{i^{\prime}=1}^{I_{j}} b_{i^{\prime},j} r_{j}^{-l_{1}} \biggr|
    \leq C(N,r)
    \end{equation*}
so that combining this with our bound on $I-M^{-1}$ we have
\begin{equation}
    \biggl| b_{0,j} -  \sum_{i^{\prime}=1}^{I_{j}} b_{i^{\prime},j} \biggr|
    \leq C(N,r)r_{j}^{l_{1}}
    \end{equation}

If we examine the function $f$ in \eqref{FixedPoint-laplacianoffunctionf} it is clear that the
normal derivative at each point $x_{0,j}$ is $\sum_{i=1}^{I_{j}} b_{i,j}$, so our choice of
$b_{0,j}$ is a small perturbation of that which would be used to cancel the normal derivatives of
$f$.  We also remark that this shows all $|b_{0,j}|\leq C(N,r)$.
\end{proof}

If $l_{1}$ and $l_{2}$ are large enough then the values $b_{0,j}$ from Lemma
\ref{FixedPoint-therearebzerojlemma} may be used to complete Definition
\ref{FixedPoint-defnoffunctionPsidefn} for the operator $\Psi$.  Some key properties of this
operator are summarized in the following lemma.
\begin{lemma}\label{FixedPoint-mainlemma}
    If $l_{1}$ and $l_{2}$ are sufficiently large then $\Psi(u)\in\mathcal{C}\cap\dom(\Delta)$ and
    \begin{align}
        \bigl\| \Psi u \bigr\|_{\infty} &\leq C_{1}  \label{FixedPoint-desiredLinftybdforPsiu}\\
        \bigl| \Psi u(y) - 1 \bigr| &\leq C_{2}  \sum_{j=1}^{N} r_{j}^{l_{2}} \quad \text{for all $y\in X\setminus Y$}
        \label{FixedPoint-desiredestimateonXminusY}
        \end{align}
    where $C_{1}$, $C_{2}$ and the minimal sizes of $l_{1}$ and $l_{2}$ are constants depending only on the harmonic structure of $X$,
    the measure $\mu$ and the number of vertices $N$ in $\partial X$.
    \end{lemma}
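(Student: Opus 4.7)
The plan is to exhibit $\Psi u$ as a perturbation of the piecewise harmonic function $f$ from \eqref{FixedPoint-laplacianoffunctionf} and then apply the perturbation estimates of Lemma~\ref{FixedPoint-changingmeasurelemma} together with \eqref{FixedPoint-estimateforintroducingmassonboundarycell}. Since $b_{i,j}r_{j}^{-l_{1}}=a_{i,j}$ for $i\geq 1$, since $f$ vanishes on $\partial X$, and since $\Delta f=\sum_{j,i\geq 1} a_{i,j}\delta_{x_{i,j}}$, the identity $G(\Delta f)=f$ yields the decomposition
\begin{equation*}
\Psi u \;=\; f \;+\; G\Bigl(\sum_{j=1}^{N}\sum_{i=1}^{I_{j}} a_{i,j}\bigl(u_{i,j}\,d\mu-\delta_{x_{i,j}}\bigr)\Bigr) \;+\; G\Bigl(\sum_{j=1}^{N} b_{0,j}r_{j}^{-l_{1}}u_{0,j}\,d\mu\Bigr).
\end{equation*}

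The central observation is that the signed measures $\nu_{i,j}=u_{i,j}d\mu-\delta_{x_{i,j}}$ are supported in $Z_{i,j}$, have total variation at most $4$ (because $\int u\geq\tfrac{1}{2}$ for $u\in\mathcal{C}$ forces $\|u_{i,j}d\mu\|\leq 3$), and, crucially, have total mass zero. Therefore both halves of Lemma~\ref{FixedPoint-changingmeasurelemma} apply to the middle term: \eqref{FixedPoint-estimatefromijtermsonallX} gives a bound of $4C(r)N^{2}$ on all of $X$, while the sharper estimate \eqref{FixedPoint-estimatefromijtermsonXlessY} yields the much smaller bound $4C(r)N\sum_{k}r_{k}^{l_{2}}$ on $X\setminus Y$. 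For the final term, Lemma~\ref{FixedPoint-therearebzerojlemma} gives $|b_{0,j}|\leq C(N,r)$ and the same calculation gives $\|u_{0,j}d\mu\|\leq 3$, so applying \eqref{FixedPoint-estimateforintroducingmassonboundarycell} cellwise and summing produces a bound of the form $C(N,r)\sum_{j}r_{j}^{l_{2}}$ valid everywhere on $X$.

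Combining these three bounds with $\|f\|_{\infty}\leq 1$ produces \eqref{FixedPoint-desiredLinftybdforPsiu} (once $l_{2}$ is large enough that $\sum_{j}r_{j}^{l_{2}}\leq 1$), while the equality $f\equiv 1$ on $X\setminus Y$ produces \eqref{FixedPoint-desiredestimateonXminusY} there. The remaining assertions are routine: $\Psi u=G(v)$ is continuous because $v$ is a finite linear combination of the continuous functions $u_{i,j}$ and the Green's kernel is continuous; $\Psi u$ vanishes on $\partial X$ by the Dirichlet boundary condition of $G$; and $\Delta\Psi u=v\in C(X)$ gives $\Psi u\in\dom(\Delta)$. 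The membership $\Psi u\in\mathcal{C}$ reduces to $\|\Psi u-1\|_{1}\leq\tfrac{1}{2}$, which is obtained by splitting the integral into contributions on $Y$ and $X\setminus Y$: using \eqref{FixedPoint-desiredLinftybdforPsiu} bounds the first piece by $(1+C_{1})\sum_{j}\mu_{j}^{l_{1}}$, and using \eqref{FixedPoint-desiredestimateonXminusY} bounds the second by $C_{2}\sum_{j}r_{j}^{l_{2}}$; both can be made arbitrarily small by enlarging $l_{1}$ and $l_{2}$.

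The principal subtlety is recognizing the first step. Subtracting $\delta_{x_{i,j}}$ from $u_{i,j}d\mu$ to obtain a \emph{mean-zero} perturbation is precisely what converts the coarse $O(1)$ estimate of \eqref{FixedPoint-estimatefromijtermsonallX} into the $O(\sum r_{k}^{l_{2}})$ estimate of \eqref{FixedPoint-estimatefromijtermsonXlessY} on $X\setminus Y$; without this cancellation, $\Psi u$ would not be uniformly close to $1$ on $X\setminus Y$ and the fixed-point iteration at the next stage could not be initialized. The remainder of the argument is simply bookkeeping, taking $l_{1}$ and $l_{2}$ large enough that the geometric decay outweighs the constants arising from Lemmas~\ref{FixedPoint-changingmeasurelemma} and~\ref{FixedPoint-therearebzerojlemma}.
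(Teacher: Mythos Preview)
Your proposal is correct and follows essentially the same approach as the paper. The only organizational difference is that you invoke the global decomposition $\Psi u = f + G\bigl(\sum a_{i,j}(u_{i,j}\,d\mu-\delta_{x_{i,j}})\bigr) + G\bigl(\sum b_{0,j}r_{j}^{-l_{1}}u_{0,j}\,d\mu\bigr)$ from the outset and derive both \eqref{FixedPoint-desiredLinftybdforPsiu} and \eqref{FixedPoint-desiredestimateonXminusY} from it, whereas the paper obtains the $L^{\infty}$ bound by applying \eqref{FixedPoint-estimatefromijtermsonallX} directly to the $u_{i,j}\,d\mu$ and only introduces the mean-zero subtraction $u_{i,j}\,d\mu-\delta_{x_{i,j}}$ when proving \eqref{FixedPoint-desiredestimateonXminusY}; the remaining steps (the $\|\Psi u-1\|_{1}\leq\tfrac{1}{2}$ verification and the $\dom(\Delta)$ check) are identical.
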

\begin{proof}
Since
\begin{equation*}
    \Psi (u)
    = G \biggl( \sum_{j=1}^{N} \sum_{i=1}^{I_{j}} a_{i,j} u_{i,j}  + \sum_{j=1}^{N}
    b_{0,j}r_{j}^{-l_{1}} u_{0,j} \biggr)
    \end{equation*}
we obtain \eqref{FixedPoint-desiredLinftybdforPsiu} from
\eqref{FixedPoint-estimatefromijtermsonallX} and
\eqref{FixedPoint-estimateforintroducingmassonboundarycell}, and the fact that $|b_{0,j}|\leq
C(N,r)$ for all $j$.  The estimate \eqref{FixedPoint-desiredestimateonXminusY} is only a little
more difficult.  Using $f(x)= 1$ and \eqref{FixedPoint-laplacianoffunctionf} on the set $X\setminus
Y$ we see that
\begin{align}
    \bigl| \Psi u (x) - 1 \bigr|
    &=\bigl| \Psi u (x) - f(x) \bigr| \notag\\
    &=\Biggl| G \biggl( \sum_{j=1}^{N} \sum_{i=0}^{I_{j}} b_{i,j}r_{j}^{-l_{1}} u_{i,j} \biggr)
        - G\biggl(\sum_{j=1}^{N} \sum_{i=1}^{I_{j}} a_{i,j} \delta_{x_{i,j}} \biggr) \Biggr| \notag\\
    &\leq \Biggl| G \biggl( \sum_{j=1}^{N} \sum_{i=1}^{I_{j}} a_{i,j} \Bigl( u_{i,j}d\mu
    -\delta_{x_{i,j}} \Bigr) \biggr) \Biggr|
        + \sum_{j=1}^{N} \bigl| b_{0,j} r_{j}^{-l_{1}} G(u_{0,j}d\mu) \bigr| \notag\\
    &\leq C(N,r) \sup_{i,j} \bigl\| u_{i,j}d\mu -\delta_{x_{i,j}} \bigr\| \biggl( \sum_{k=1}^{N}
    r_{k}^{l_{2}} \biggr)
        + C(N,r) \biggl( \sum_{j=1}^{N} r_{j}^{l_{2}} \biggr) \|u_{0,j}d\mu \| \notag\\
    &\leq C(N,r)  \biggl( \sum_{j=1}^{N} r_{j}^{l_{2}} \biggr)
    \label{FixedPoint-cancellationcomputation}
    \end{align}
where the estimate for the $b_{0,j}$ terms came from
\eqref{FixedPoint-estimateforintroducingmassonboundarycell} and that for the $a_{i,j}$ terms is
from \eqref{FixedPoint-estimatefromijtermsonXlessY} because $\int u_{i,j}d\mu = 1 =
\int\delta_{x_{i,j}}$ and both are supported on $Z_{i,j}$.

Finally we check that $\|\Psi u-1\|_{1}\leq \frac{1}{2}$.  Using the results we have so far
\begin{align}
    \|\Psi u -1 \|_{1}
    &\leq \int_{X\setminus Y} C(N,r)  \biggl( \sum_{j=1}^{N} r_{j}^{l_{2}} \biggr) d\mu  + \int_{Y}
    \bigl( 1+C_ {1} \bigr) d\mu \notag\\
    &\leq C(N,r)  \biggl( \sum_{j=1}^{N} r_{j}^{l_{2}} \biggr) + (1+C_{1})\mu(Y) \
    \end{align}
so that we can be sure $\Psi u\in\mathcal{C}$ if both $l_{1}$ and $l_{2}$ are sufficiently large,
because $\mu(Y)\rightarrow0$ as $l_{1}\rightarrow\infty$.  It has already been observed that
$u\in\dom(\Delta)$ and $u\equiv0$ on $\partial X$, so the lemma is proven.
\end{proof}

Finally we come to the main result of this section.  The following theorem implements the idea that
motivated our definition of $\Psi$, namely that $\Psi$ smoothes functions in $\mathcal{C}$ and
therefore its recursive application gives a bump function in $\dom(\Delta^{\infty})$.
\begin{theorem}\label{FixedPoint-smoothbumpexiststheorem}
    Given $\epsilon>0$ there are $l_{1}$ and $l_{2}$ sufficiently large that $\Psi$
    has a fixed point $u_{0}$ in $\mathcal{C}$ with $|u-1|\leq \epsilon$ on $X\setminus Y$.
    The fixed point is a smooth bump function and every $u\in\mathcal{C}$ has
    $\|\Psi^{k} u- u_{0}\|_{\infty}\rightarrow0$ as $k\rightarrow\infty$.
    \end{theorem}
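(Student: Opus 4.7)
The plan is to obtain $u_{0}$ as the Banach fixed point of $\Psi$ on $\mathcal{C}$ equipped with the supremum norm. The set $\mathcal{C}$ is closed in $(C(X),\|\cdot\|_{\infty})$, and Lemma \ref{FixedPoint-mainlemma} already gives $\Psi:\mathcal{C}\to\mathcal{C}$ once $l_{1},l_{2}$ are large, so the central task is to show $\Psi$ contracts the sup norm. Writing $\Psi u-\Psi w=G(v^{(u)}-v^{(w)})$, the difference $v^{(u)}-v^{(w)}$ decomposes into two kinds of perturbations. First, the rescaled functions $u_{i,j}$ from \eqref{FixedPoint-defnofuij} depend on $u$ through both $u\circ F_{i,j}^{-1}$ and the normalization $(\int u)^{-1}$; since $\int u\in[\tfrac{1}{2},\tfrac{3}{2}]$ for $u\in\mathcal{C}$, both dependencies are Lipschitz in $\|u-w\|_{\infty}$, and crucially $\int(u_{i,j}^{(u)}-u_{i,j}^{(w)})\,d\mu=0$. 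Second, the coefficients $b_{0,j}$ depend on $u$ implicitly through the linear system \eqref{FixedPoint-secondconditoncoeffsforv}, and the perturbation estimate \eqref{FixedPoint-estimateforM} applied to $u\,d\mu-w\,d\mu$ bounds $|b_{0,j}^{(u)}-b_{0,j}^{(w)}|$ by $C(N,r)\bigl(\sum_{j}r_{j}^{l_{1}+l_{2}}\bigr)\|u-w\|_{\infty}$. Combining the zero-integral estimate \eqref{FixedPoint-estimatefromijtermsonXlessY} with the small-cell bound \eqref{FixedPoint-estimateforintroducingmassonboundarycell} then yields
\begin{equation*}
\|\Psi u-\Psi w\|_{\infty}\leq C(N,r,\mu)\Bigl(\sum_{j}r_{j}^{l_{2}}\Bigr)\|u-w\|_{\infty},
\end{equation*}
which is a strict contraction as soon as $l_{2}$ is large enough.

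The Banach fixed point theorem then produces a unique $u_{0}\in\mathcal{C}$ with $\Psi u_{0}=u_{0}$ and $\|\Psi^{k}u-u_{0}\|_{\infty}\to 0$ for every $u\in\mathcal{C}$. Applying \eqref{FixedPoint-desiredestimateonXminusY} to $u_{0}=\Psi u_{0}$ gives $|u_{0}-1|\leq C_{2}\sum_{j}r_{j}^{l_{2}}<\epsilon$ on $X\setminus Y$, after possibly enlarging $l_{2}$. Smoothness of $u_{0}$ follows by bootstrapping the fixed-point identity: Lemma \ref{FixedPoint-mainlemma} gives $u_{0}=\Psi u_{0}\in\dom(\Delta)$, and $\Delta u_{0}=v^{(u_{0})}$ is a finite linear combination of rescaled copies of $u_{0}$ supported on the cells $Z_{i,j}$; if $u_{0}\in\dom(\Delta^{k})$, then each rescaled copy is too, so $\Delta u_{0}\in\dom(\Delta^{k})$ and $u_{0}\in\dom(\Delta^{k+1})$.

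It remains to check that $u_{0}$ vanishes to infinite order on $\partial X$. From $u_{0}=G(v^{(u_{0})})$ we have $u_{0}|_{\partial X}\equiv 0$, and $\partial_{n}u_{0}|_{\partial X}\equiv 0$ by the very construction of the $b_{0,j}$ in Lemma \ref{FixedPoint-therearebzerojlemma}. Near $q_{j}\in\partial X$ only the single term $b_{0,j}r_{j}^{-l_{1}}u_{0,j}^{(u_{0})}$ of $v^{(u_{0})}$ has support there, and by \eqref{FixedPoint-defnofuij} it is a scalar multiple of $u_{0}\circ F_{0,j}^{-1}$; since $F_{0,j}=F_{j}^{l_{1}+l_{2}}$ fixes $q_{j}$ and rescales normal derivatives at $q_{j}$ by $r_{j}^{-(l_{1}+l_{2})}$, both the value and the normal derivative of $\Delta u_{0}=v^{(u_{0})}$ at $q_{j}$ inherit the vanishing from $u_{0}$. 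Using the self-similar scaling of $\Delta$ on each $Z_{0,j}$ and iterating this argument, one concludes $\Delta^{k}u_{0}(q)=\partial_{n}\Delta^{k}u_{0}(q)=0$ for all $k$ and all $q\in\partial X$, so $u_{0}$ is a smooth bump function in the sense fixed at the end of the Introduction. The principal obstacle in executing this program is the contraction estimate: because $\Psi$ is genuinely nonlinear both through the normalization $(\int u)^{-1}$ and through the implicit dependence of the $b_{0,j}$, one must be careful to exploit the cancellation $\int(u_{i,j}^{(u)}-u_{i,j}^{(w)})d\mu=0$ when invoking Lemma \ref{FixedPoint-changingmeasurelemma}, as it is precisely this cancellation that supplies the gain of $\sum_{j}r_{j}^{l_{2}}$ needed to force the contraction constant below one.
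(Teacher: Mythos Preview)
Your overall strategy is sound, and the smoothness and infinite-order-vanishing arguments are fine, but there is a genuine gap in the contraction step. The estimate \eqref{FixedPoint-estimatefromijtermsonXlessY} from Lemma~\ref{FixedPoint-changingmeasurelemma} is explicitly valid only for $x\in X\setminus Y$; its proof uses that $y\mapsto g(x,y)$ is harmonic on each $Z_{i,j}$ when $x$ lies outside that cell, giving the oscillation bound $C(r)r_{w_{i,j}}$, and then invokes the maximum principle on $X\setminus Y$. For $x\in Y_{j}$, in particular for $x$ in one of the cells $Z_{i,j}$, that harmonicity fails and you are left with nothing better than \eqref{FixedPoint-estimatefromijtermsonallX}, which carries no factor of $\sum_{k}r_{k}^{l_{2}}$. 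Consequently the displayed inequality $\|\Psi u-\Psi w\|_{\infty}\le C\bigl(\sum_{j}r_{j}^{l_{2}}\bigr)\|u-w\|_{\infty}$ is unjustified on $Y$, and you have not shown that $\Psi$ contracts the sup norm.

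The paper avoids this obstacle by proving contraction in $L^{1}$ rather than $L^{\infty}$: on $X\setminus Y$ one has the good bound with factor $\sum_{k}r_{k}^{l_{2}}$, while on $Y$ the weak pointwise bound \eqref{FixedPoint-LoneboundforPsi} is integrated against $\mu(Y)$, which can be made small by choosing $l_{1}$ large. This yields $\|\Psi u-\Psi\tilde u\|_{1}\le C\bigl(\mu(Y)+\sum_{j}r_{j}^{l_{2}}\bigr)\|u-\tilde u\|_{1}$, a genuine contraction once both $l_{1}$ and $l_{2}$ are large. The $L^{1}$ fixed point is then upgraded to uniform convergence via the same weak pointwise bound \eqref{FixedPoint-LoneboundforPsi}, which shows $\|\Psi u-\Psi\tilde u\|_{\infty}\le C\|u-\tilde u\|_{1}$. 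You could repair your argument along the same lines; the key point you missed is that the cancellation $\int(u_{i,j}^{(u)}-u_{i,j}^{(w)})\,d\mu=0$ buys you the gain $\sum_{k}r_{k}^{l_{2}}$ only away from the support cells, so a second smallness mechanism (here, $\mu(Y)\to 0$) is needed on $Y$.
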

\begin{proof}
Let  $u,\tilde{u}\in\mathcal{C}$. We calculate $\Psi u - \Psi \tilde{u}= G (v - \tilde{v})$, where
$v$ and $\tilde{v}$ are as in \eqref{FixedPoint-defnoffunctionv}.  Beginning with a variant of the
computation \eqref{FixedPoint-cancellationcomputation} we have
\begin{align}
    \bigl| G(v-\tilde{v} ) (x)\bigr|
    &\leq \Biggl| G \biggl( \sum_{j=1}^{N} \sum_{i=1}^{I_{j}} a_{i,j} \bigl(
    u_{i,j}-\tilde{u}_{i,j} \bigr) \biggr) \Biggr|
        + \biggl| G \Bigl( \sum_{j=1}^{N} r_{j}^{-l_{1}} \bigl( b_{0,j} u_{0,j} -\tilde{b}_{0,j} \tilde{u}_{0,j} \bigr)
        \Bigr) \Biggr| \notag\\
    &\leq \Biggl| G \biggl( \sum_{j=1}^{N} \sum_{i=1}^{I_{j}} a_{i,j} \bigl(
    u_{i,j}-\tilde{u}_{i,j} \bigr) \biggr) \Biggr|
        + \sum_{j=1}^{N} r_{j}^{-l_{1}} |\tilde{b}_{0,j}| \bigl| G(u_{0,j}-\tilde{u}_{0,j} ) \bigr| \notag\\
    &\qquad  + \sum_{j=1}^{N} r_{j}^{-l_{1}} \bigl| b_{0,j} -\tilde{b}_{0,j} \bigr| |G(u_{0,j})(x)|
    \label{FixedPoint-perturbationestforuandtildeu}
    \end{align}
which suggests we will need to know estimates for both $(u_{i,j}-\tilde{u}_{i,j})$ and
$|b_{0,j}-\tilde{b}_{0,j}|$.  Conveniently we can reduce the latter to the former using
\eqref{FixedPoint-estimateforM} and \eqref{FixedPoint-estimateofA}, because $b_{0,j}$ and
$\tilde{b}_{0,j}$ are computed from equations of the form $\sum_{j}M(ud\mu)_{j,i} b_{0,j}
r_{j}^{-l_{1}} = A(ud\mu)_{i}$.  We easily see that
\begin{equation*}
   r_{j}^{-l_{1}} \bigl(b_{0,j}-\tilde{b}_{0,j} \bigr)
    = \Bigl( M(ud\mu)^{-1} A(ud\mu-\tilde{u}d\mu) \Bigr)
    + \Bigl( M(ud\mu)^{-1} M(\tilde{u}d\mu -ud\mu) \bigl( r_{j}^{-l_{1}}b_{0,j} \bigr) \Bigr)
    \end{equation*}
however by \eqref{FixedPoint-estimateforM} we have both $\|M(ud\mu-\tilde{u}d\mu) \|\leq
C(N,r)\|u-\tilde{u}\|_{1}\sum_{i}r_{i}^{l_{1}+l_{2}}$ and that $\|I-M^{-1}(ud\mu)\|\leq
C(N,r)\|u\|_{1}\sum_{i}r_{i}^{l_{1}+l_{2}}$, while \eqref{FixedPoint-estimateofA} gives us that
$\|A(ud\mu-\tilde{u}d\mu)\|\leq C(N,r) \sum_{i}r_{i}^{l_{2}} \|u-\tilde{u} \|_{1}$.  In both cases
we have used that the total variation of $u_{i,j}-\tilde{u}_{i,j}$ is bounded by
$\|u_{i,j}-\tilde{u}_{i,j}\|_{1}$ and that writing $u_{X}= \int_{X} u$ we can calculate
\begin{equation*}
    \int |u_{i,j}-\tilde{u}_{i,j}|
    = \int_{X} \Bigl| u_{X}^{-1} u(x) - \tilde{u}_{X} \tilde{u}(x)
    \Bigr|
    \leq \tilde{u}_{X}^{-1} \bigl(1+ |u|_{X} u_{X}^{-1} \bigr)  \int |u-\tilde{u}|
    \leq 8 \|u-\tilde{u}\|_{1}
    \end{equation*}
The conclusion is then that $r_{j}^{-l_{1}}|b_{0,j}-\tilde{b}_{0,j}|\leq
C(N,r)\|u-\tilde{u}\|_{1}\sum_{i}r_{i}^{l_{2}}$.   Substituting this into
\eqref{FixedPoint-perturbationestforuandtildeu} and using
\eqref{FixedPoint-estimatefromijtermsonXlessY} and
\eqref{FixedPoint-estimateforintroducingmassonboundarycell} we find that on $X\setminus Y$
\begin{align*}
    \bigl| G(v-\tilde{v} ) (x)\bigr|
    &\leq C(N,r) \biggl( \sum_{i=1}^{N} r_{i}^{l_{2}} \biggr) \sup_{i,j} \bigl\|u_{i,j}d\mu-\tilde{u}_{i,j}d\mu \bigr\|
        + C(N,r) \Bigl( \sum_{i}r_{i}^{l_{2}} \Bigr) \|u-\tilde{u}\|_{1} \\
    &\leq C(N,r) \Bigl( \sum_{i}r_{i}^{l_{2}} \Bigr) \|u-\tilde{u}\|_{1}
    \end{align*}
because the total variation $\|u_{i,j}d\mu-\tilde{u}_{i,j}d\mu\|$ was already computed to be at
most $8\|u-\tilde{u}\|_{1}$.  On the rest of $X$ we  must use
\eqref{FixedPoint-estimatefromijtermsonallX} instead of
\eqref{FixedPoint-estimatefromijtermsonXlessY}.  The weaker estimate is easily computed to be
\begin{equation} \label{FixedPoint-LoneboundforPsi}
    \bigl| G(v-\tilde{v} ) (x)\bigr|
    \leq C(N,r) \|u-\tilde{u}\|_{1}
    \end{equation}

From our estimates on $G(v-\tilde{v}) = \Psi u-\Psi \tilde{u}$ we see that $\Psi$ is a contraction
on $L^{1}$ if $l_{1}$ and $l_{2}$ are sufficiently large, because
\begin{align*}
    \bigl\| \Psi u - \Psi\tilde{u} \bigr\|_{1}
    &\leq  \mu(X\setminus Y) C(N,r) \Bigl( \sum_{i}r_{i}^{l_{2}} \Bigr) \|u-\tilde{u}\|_{1}
    + \mu(Y) C(N,r) \|u-\tilde{u}\|_{1} \\
    &\leq C(N,r) \Bigl( \mu(Y) + \sum_{j} r_{j}^{l_{2}} \Bigr) \bigl\| u  - \tilde{u} \bigr\|_{1}
    \end{align*}
It follows readily that $\Psi$ has a unique fixed point in $\mathcal{C}$  and $\Psi^{k}u$ converges
to this fixed point in $L^{1}$.  From \eqref{FixedPoint-LoneboundforPsi} this convergence is
uniform, and we notice that the correct choice of $l_{2}$ provides $|u_{0}-1|=|\Psi u_{0}-1|\leq
\epsilon$ on $X\setminus Y$ by \eqref{FixedPoint-desiredestimateonXminusY}.

It remains only to see that $u_{0}$ is a smooth bump function on $X$.~ Inductively suppose
$\Psi^{k}u\in\dom(\Delta^{k})$ and both $\Delta^{j}\Psi^{k}u\equiv 0$ on $\partial X$ for $0\leq
j\leq k$ and $\partial_{n}\Delta^{j}\Psi^{k}u\equiv 0$ on $\partial X$ for $0\leq j\leq k-1$.  This
is certainly true for $k=0$. By construction, $\Delta \Psi^{k+1}u$ is a linear combination of
rescaled copies of $\Psi^{k}u$ that have been extended by zero as in \eqref{FixedPoint-defnofuij}.
Each of these functions is in $\dom(\Delta^{k})$ by the matching conditions for the Laplacian, so
we conclude that $\Psi^{k+1}u\in\dom(\Delta^{k+1})$.  It is immediate that
$\Delta^{j}\Psi^{k+1}u\equiv 0$ on $\partial X$ for $1\leq j\leq k+1$ and
$\partial_{n}\Delta^{j}\Psi^{k+1}u\equiv 0$ on $\partial X$ for $1\leq j\leq k$.  By Lemma
\ref{FixedPoint-mainlemma} we know also that $\Psi^{k+1}u$ and $\partial_{n}\Psi^{k+1}u$ vanish on
$\partial X$, which closes the induction and establishes that $u_{0}\in \dom(\Delta^{\infty})$ and
vanishes to infinite order on $\partial X$.
\end{proof}


\section{A Borel theorem on p.c.f. fractals}\label{Borel_section}

The classical Borel theorem tells us that given any neighborhood of $x_{0}\in\mathbb{R}$ and any
prescribed sequence of values for $u$ and its derivatives at $x_{0}$, we may construct a smooth
function $u$ with support in the neighborhood and the given sequence of derivatives at $x_{0}$.
Using the smooth bump functions we have constructed, we now show that the same result holds at
junction points of certain p.c.f. fractals.  In what follows $X$ is p.c.f. and self-similar under
$\{F_{j}\}_{j=1}^{N}$ and the measure $\mu$ is self-similar with factors $0<\mu_{j}<1$,
$\sum_{1}^{N} \mu_{j}=1$, so that $\mu(F_{w}(X))=\prod_{j=1}^{m} \mu_{w_{j}}$ when $w$ is the word
$w_{1}\dotsc w_{m}$.  The Dirichlet form is that associated to a regular self-similar harmonic
structure with resistance renormalization factors $0<r_{j}<1$ for $j=1,\dotsc,N$.  Our arguments
depend on the existence of smooth bumps as previously constructed.  The crucial step is the
existence of smooth functions with finitely many prescribed normal derivative values, which is
established in the following lemma.
\begin{lemma}\label{Borel-existgencepffunctionsfl}
    Given a boundary point $q\in V_{0}$ there are smooth functions $f_{l}$ such that
    \begin{gather*}
        \Delta^{k}f_{l}(p) = 0 \quad \text{for all $k\geq 0$ and all $p\in V_{0}$}\\
        \partial_{n}\Delta^{k}f_{l}(p) = 0 \quad\text{for all $k\geq 0$ and all $p\in
        V_{0}\setminus\{q\}$}\\
        \partial_{n} \Delta^{k} f_{l} (q) = \delta_{lk}
        \end{gather*}
    \end{lemma}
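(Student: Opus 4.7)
The plan is to induct on $l$ while constructing the analogous functions for every boundary point simultaneously. I claim there exist smooth $f_l^{(p)}$, one for each $l \geq 0$ and $p \in V_0$, satisfying $\Delta^k f_l^{(p)}(p') = 0$ and $\partial_n \Delta^k f_l^{(p)}(p') = \delta_{l,k}\delta_{p,p'}$ for all $k \geq 0$ and $p' \in V_0$. The desired $f_l$ is then $f_l^{(q)}$.

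For the base case $l=0$, take a smooth bump $u \in \dom(\Delta^{\infty})$ obtained from Theorem~\ref{FixedPoint-smoothbumpexiststheorem} that vanishes to infinite order on $V_0$ and has $\int u\, d\mu$ bounded below by a positive constant (the membership $u\in\mathcal{C}$ of the fixed point gives $\int u\,d\mu \geq 1/2$). For each $q_i \in V_0$ and large $m$, set $u_i = u\circ F_i^{-m}$ on $F_i^m(X)$ and $0$ elsewhere; the vanishing-to-infinite-order property makes $u_i$ smooth on all of $X$ via the matching conditions. Applying the Gauss-Green identity \eqref{Introduction-thegaussgreenformula} with $v = h_{q_j}$ the harmonic function taking value $\delta_{j,k}$ at $q_k$ yields
\[
  \partial_n G u_i(q_j) \;=\; \int_X h_{q_j}\, u_i\, d\mu \;=:\; M_{j,i},
\]
since $\Delta h_{q_j} = 0$, $Gu_i \equiv 0$ on $V_0$, and $\Delta Gu_i = u_i$. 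Because $h_{q_j}$ has oscillation at most $r_i^m$ on $F_i^m(X)$ with $h_{q_j}(q_i)=\delta_{j,i}$, while $\int u_i\, d\mu$ is bounded below by a positive multiple of $\mu_i^m$, the matrix $M$ becomes diagonally dominant for $m$ large and is therefore invertible. Solving $\sum_i M_{j,i}\, c_i^{(p)} = \delta_{p,q_j}$ and setting $f_0^{(p)} = \sum_i c_i^{(p)} G u_i$ gives the required function: for $k=0$ the boundary values vanish and the normal derivatives are $\delta_{p,\cdot}$ by construction, while for $k \geq 1$, $\Delta^k f_0^{(p)} = \sum_i c_i^{(p)} \Delta^{k-1} u_i$ vanishes to infinite order on $V_0$ since each $u_i$ does.

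For the inductive step, assume $f_{l'}^{(p')}$ have been constructed for all $l'<l$ and $p'\in V_0$. Set $\tilde f = G^l f_0^{(p)}$, which lies in $\dom(\Delta^{\infty})$ because $G$ preserves smoothness (from $\Delta Gf = f$ together with continuity of $Gf$). Then $\Delta^l \tilde f = f_0^{(p)}$, so $\partial_n \Delta^l \tilde f(p') = \delta_{p,p'}$ and $\Delta^l \tilde f(p') = 0$; for $k>l$, $\Delta^k \tilde f = \Delta^{k-l} f_0^{(p)}$ already vanishes to infinite order at $V_0$; for $0\leq k<l$, $\Delta^k \tilde f = G^{l-k} f_0^{(p)}$ is zero on $V_0$ but may have nonzero normal derivatives $a_{k,p'} := \partial_n \Delta^k \tilde f(p')$. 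I then set
\[
  f_l^{(p)} \;=\; \tilde f - \sum_{k=0}^{l-1} \sum_{p' \in V_0} a_{k,p'}\, f_k^{(p')}.
\]
By the inductive hypothesis, the subtracted terms contribute nothing to the values of any $\Delta^j$ on $V_0$, produce normal derivative $\delta_{k,k'}\delta_{p',p''}$ of $\Delta^{k'}$ at $p''$, and so cancel the unwanted $a_{k,p'}$ at each level $k<l$ while leaving the normalization at level $l$ untouched.

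The main obstacle is the base case, which hinges on the invertibility of $M$. The crucial inputs are the Gauss-Green identity, which converts the normal-derivative data into integrals of standard harmonic functions against the localized bumps, and the cell-oscillation estimate on harmonic functions, which makes $M$ a small perturbation of a positive diagonal matrix once $m$ is large. After that, the induction is essentially linear-algebraic bookkeeping combined with the fact that $G$ preserves smoothness and Dirichlet-zero boundary data.
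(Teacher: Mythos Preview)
Your proof is correct and follows essentially the same approach as the paper: localize the smooth bump $U$ to boundary cells, apply the Dirichlet Green's operator, use Gauss--Green to convert normal derivatives at $V_{0}$ into the matrix $M_{j,i}=\int h_{q_j}u_i\,d\mu$, argue invertibility from the harmonic oscillation bound, and then induct by applying further powers of $G$ and subtracting lower-order corrections. Two small remarks: your inductive step with $\tilde f=G^{l}f_0^{(p)}$ is a mild streamlining of the paper's (which re-solves the matrix equation at each level using $G^{(L+1)}(U_j)$), and your phrase ``diagonally dominant'' is not literally accurate when the $\mu_i$ differ---what you want, exactly as the paper does, is that $MD^{-1}$ is close to the identity where $D$ is the diagonal matrix with entries $\mu_i^{m}\int u$.
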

\begin{proof}
We begin with the case $l=0$. If $U$ is the smooth bump function on $X$ produced in Theorem
\ref{FixedPoint-smoothbumpexiststheorem} we localize it near the boundary points of $X$ at a scale
$m$ to be determined later. Define
\begin{equation*}
    U_{j} = \begin{cases}
            U\circ F_{j}^{-m} &\text{on $F_{j}^{m}(X)$}\\
            0 &\text{otherwise}
        \end{cases}
    \end{equation*}
and observe from the matching conditions for the Laplacian that each $U_{j}$ is smooth.  Now apply
the Dirichlet Green's operator $G$ to these functions and form the linear combination
\begin{equation*}
    f = \sum_{j=1}^{N} a_{j}G(U_{j})
    \end{equation*}
with coefficients to be chosen.  It is clear from the properties of $U$ that $\Delta^{k}f=0$ on
$V_{0}$ for all $k\geq 0$ and that $\partial_{n}\Delta^{k} f=0$ on $V_{0}$ if $k\geq 1$. Moreover
the Gauss-Green formula yields values of the normal derivatives at the points $q_{i}\in V_{0}$
\begin{equation*}
    \partial_{n} f(q_{i}) = - \sum_{j=1}^{N} a_{j} \int_{X} h_{i} U_{j}
    \end{equation*}
where $h_{i}$ is the harmonic function on $X$ with boundary values $h_{i}(q_{j})=\delta_{ij}$. In
order that there be coefficients $a_{j}$ such that $f$ has the properties asserted for $f_{0}$ it
then suffices that we can invert the matrix with entries $A_{ij}=\int h_{i}U_{j}$.  We use the fact
that
\begin{gather*}
    |h_{i}|\leq r_{j}^{m} \quad\text{on} \quad F_{j}^{  m}(X) \quad\text{for}\quad j\neq i\\
    |h_{i}-1|\leq r_{i}^{m} \quad\text{on}\quad F_{i}^{  m}(X)
    \end{gather*}
which follow from the estimates on the oscillation of a harmonic function that were mentioned at
the end of the introduction.  Using this we calculate
\begin{gather}
    |A_{ij}| = \left| \int h_{i} U_{j} \right| \leq r_{j}^{m} \mu_{j}^{m} \int_{X} |U| \quad\text{for
    $j\neq i$}\notag\\
    \left| A_{ii}-\mu_{i}^{m}\int_{X} U \right| = \left| \int (h_{i}-1)U_{i} \right| \leq
    r_{i}^{m}\mu_{i}^{m} \int_{X} |U| \label{Borel-matrixneardiagonal}
    \end{gather}
Let $D$ be the diagonal matrix with entries $D_{ii}=\mu_{i}^{m}\int_{X}U$.  Then we readily compute
$(AD^{-1})_{ij}=\mu_{j}^{-m}\bigl(\int U\bigr)^{-1}A_{ij}$ is close to the identity if $m$ is
large. Indeed, by \eqref{Borel-matrixneardiagonal} we have $|(I-AD^{-1})_{ij}|\leq C\rho^{m}$ with
$\rho=\max_{i}r_{i}$ and $C=\bigl(\int U\bigr)^{-1}\bigl(\int |U|\bigr)$, so that $AD^{-1}$ is
invertible provided $m$ is sufficiently large.

We proceed by induction on $l$, with an almost unchanged argument. Suppose the functions $f_{l}$
for $l\leq L-1$ have been constructed as linear combinations of the form
\begin{equation}\label{Borel-constructionoffl}
    f_{l} = \sum_{n=1}^{l}\sum_{j=1}^{N} a_{jn} G^{  (n+1)}(U_{j}) \quad l\leq L-1
    \end{equation}
and consider the function
\begin{equation*}
    f = \sum_{j=1}^{N} a_{j} G^{  (L+1)}(U_{j})
    \end{equation*}
so that $\Delta^{k}f=0$ on $V_{0}$ for all $k$ and $\partial_{n}\Delta^{k} f=0$ on $V_{0}$ for
$k\geq L+1$.  When $k=L$ we have
\begin{equation*}
    \partial_{n} \Delta^{L} f (q_{i}) = -\sum_{j=1}^{N} a_{j} A_{ij}
    \end{equation*}
where $A_{ij}$ is as before, so we may select $a_{j}$ to obtain $\partial_{n} \Delta^{L} f(q)=1$
and $\partial_{n} \Delta^{L} f(p)=0$ at other points $p\in V_{0}$. Subtracting an appropriate
linear combination of the $f_{l}$ for $l\leq L-1$ we obtain the desired $f_{L}$ in the form of
\eqref{Borel-constructionoffl}.
\end{proof}

With this in hand it is simple to deal with finitely many values of the Laplacian at a boundary
point $q\in V_{0}$.
\begin{lemma}
    Given a boundary point $q\in V_{0}$ there are smooth functions $g_{l}$ such that
    \begin{gather*}
        \partial_{n}\Delta^{k}g_{l}(p) = 0 \quad\text{for all $k\geq 0$ and all $p\in
        V_{0}$}\\
        \Delta^{k}g_{l}(p) = 0 \quad \text{for all $k\geq 0$ and all $p\in V_{0}\setminus \{q\}$}\\
        \Delta^{k}g_{l}(q) = \delta_{lk}
        \end{gather*}
    \end{lemma}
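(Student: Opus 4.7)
The plan is to build $g_l$ by starting from an explicit candidate whose iterated Laplacians pick up exactly the right values at $V_0$, and then subtracting off a correction that kills the resulting normal derivatives, using Lemma \ref{Borel-existgencepffunctionsfl} as a black box.

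First I would let $h_q$ denote the harmonic function on $X$ with boundary data $h_q(q)=1$ and $h_q(p)=0$ for $p\in V_0\setminus\{q\}$, and set $\tilde g_l = G^l h_q$ (with $\tilde g_0 := h_q$), where $G$ is the Dirichlet Green's operator.  Since $\Delta G=\mathrm{Id}$ and $Gf$ always vanishes on $V_0$, iteration gives $\Delta^k\tilde g_l = G^{l-k}h_q$ for $0\leq k\leq l$ and $\Delta^k\tilde g_l\equiv 0$ for $k>l$; in particular $\tilde g_l\in\dom(\Delta^\infty)$, and evaluating at $p\in V_0$ yields the desired
\begin{equation*}
    \Delta^k\tilde g_l(p) = \delta_{k,l}\,\delta_{p,q} \quad\text{for all }k\geq 0,\ p\in V_0,
    \end{equation*}
because each $G^{l-k}h_q$ with $l-k\geq 1$ vanishes on $V_0$, while $G^0 h_q=h_q$ takes the prescribed Kronecker values.

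The only remaining defect of $\tilde g_l$ is that the normal derivatives $c_p^{(k)}:=\partial_n\Delta^k\tilde g_l(p)$ are generally nonzero for $p\in V_0$ and $0\leq k\leq l$ (for $k>l$ they vanish automatically). To erase them I would apply Lemma~\ref{Borel-existgencepffunctionsfl} separately at each boundary point $p\in V_0$ and each order $k\in\{0,\dotsc,l\}$ to obtain smooth functions $f_{k,p}$ satisfying $\Delta^j f_{k,p}\equiv 0$ on $V_0$ and $\partial_n\Delta^j f_{k,p}(p')=\delta_{j,k}\delta_{p,p'}$, and then define
\begin{equation*}
    g_l \;=\; \tilde g_l \;-\; \sum_{k=0}^{l}\sum_{p\in V_0} c_p^{(k)}\,f_{k,p}.
    \end{equation*}
Since every $f_{k,p}$ has $\Delta^j f_{k,p}\equiv 0$ on $V_0$, the values $\Delta^j g_l(p)=\delta_{j,l}\delta_{p,q}$ are inherited from $\tilde g_l$, and the Kronecker structure of $\partial_n\Delta^j f_{k,p}$ causes the correction term to collapse to exactly $c_{p'}^{(j)}$ at each $p'\in V_0$, cancelling $\partial_n\Delta^j\tilde g_l(p')$ for $j\leq l$ (and trivially for $j>l$). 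Hence $g_l$ satisfies all three conditions, and $g_l\in\dom(\Delta^\infty)$ because every summand is.

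There is essentially no hard step here: Lemma~\ref{Borel-existgencepffunctionsfl} is doing all of the delicate work. The only thing to notice is that $G^l h_q$ is the right candidate because it cleanly places a single unit of $l$-th Laplacian at $q$ while keeping every other Laplacian value on $V_0$ zero, leaving a purely linear, diagonal correction problem at the boundary.
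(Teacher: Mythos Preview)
Your proof is correct and is essentially the same as the paper's: both take $\tilde g_l = G^l h_q$, observe that $\Delta^k\tilde g_l(p)=\delta_{k,l}\delta_{p,q}$ with vanishing normal derivatives for $k\geq l+1$, and then subtract a finite linear combination of the functions $f_{k,p}$ from Lemma~\ref{Borel-existgencepffunctionsfl} to eliminate the remaining normal derivatives for $0\leq k\leq l$.
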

\begin{proof}
Let $h$ be the harmonic function which is $1$ at $q$ and $0$ at all other points of $V_{0}$.
Clearly $\Delta^{k}h\equiv 0$ for all $k\geq 1$ and therefore also $\partial_{n}\Delta^{k}h(p)=0$
for all $k\geq 1$ and $p\in V_{0}$. For each $p\in V_{0}$ let $f_{0,p}$ be the function constructed
in Lemma \ref{Borel-existgencepffunctionsfl} with non-vanishing normal derivative at $p$.  It is
clear that $g_{0}=h-\sum_{p\in V_{0}}\bigl(\partial_{n}h(p)\bigr)f_{0,p}$ has the desired
properties, so we have found the first of our functions. To obtain the others we simply apply the
Dirichlet Green's operator $G$.~ Notice that $\Delta^{k}G^{l}h(p)=\delta_{k,l}\delta_{p,q}$ for all
$k$ and all $p\in V_{0}$, and also $\partial_{n}\Delta^{k}G^{l}h(p)=0$ for all $k\geq l+1$.  To
obtain $g_{k}$ it remains only to subtract off all normal derivatives that occur for $0\leq k\leq
l$ using the functions from Lemma \ref{Borel-existgencepffunctionsfl}.
\end{proof}

The proof of a Borel-type theorem from the above lemmas is standard.  All that is needed is
information about how scaling the support of a function changes its Laplacian and normal
derivatives.  Recall that for a Dirichlet form associated to a regular self-similar resistance,
both the Laplacian and the normal derivative may be obtained as renormalized limits of
corresponding quantities defined on the approximating graphs (Section 3.7 of \cite{Kigamibook}). In
particular, pre-composition with the map $F_{i}^{-1}$ rescales the $k$-th power of the Laplacian by
$(\mu_{i}r_{i})^{-k}$ and its normal derivative by $\mu_{i}^{-k}r_{i}^{-k-1}$. For this reason, if
$q=q_{i}$ is the boundary point of interest we define
\begin{align*}
    f_{l,m} &= \begin{cases}
                \mu_{i}^{ml}r_{i}^{m(l+1)}  f_{l}\circ F_{i}^{  (-m)} &\text{on $F_{i}^{  m}$}\\
                0 &\text{otherwise}
                \end{cases}\\
    g_{l,m} &=\begin{cases}
                (\mu_{i}r_{i})^{ml} g_{l}\circ F_{i}^{  (-m)} &\text{on $F_{i}^{  m}$}\\
                0 &\text{otherwise}
                \end{cases}
    \end{align*}
so that we have for all $k$
\begin{align}
    \Delta^{k} f_{l,m} (q) = 0   & & \Delta^{k} g_{l,m}(q) = \delta_{lk} \notag\\
    \partial_{n} \Delta^{k} f_{l,m}(q) = \delta_{lk} & & \partial_{n} \Delta^{k} g_{l,m}(q) = 0
    \label{Borel-propertiesofflmandglm}
    \end{align}
but the $L^{\infty}$ norms of the lower order derivatives have decreased and those of the higher
order derivatives have increased.
\begin{gather}
    \|\Delta^{k}f_{l,m}\|_{\infty} = \mu_{i}^{m(l-k)}r_{i}^{m(l+1-k)} \|\Delta^{k}f_{l} \|_{\infty} \label{Borel-scalingestforflm}\\
    \|\Delta^{k}g_{l,m}\|_{\infty} = (\mu_{i} r_{i})^{m(l-k)} \|\Delta^{k}g_{l}\|_{\infty}
    \label{Borel-scalingestforglm}
    \end{gather}
With this in hand we can easily prove our version of the Borel theorem.
\begin{theorem}\label{Borel-borelthm}
    Let $q\in V_{0}$ be fixed, and\/ $\Omega$ be an open neighborhood of $q$.
    Given a jet $\rho=(\rho_{0},\rho_{1},\dotsc)$ of values for powers of the Laplacian and
    $\sigma=(\sigma_{0},\sigma_{1},\dotsc)$ of values for their normal derivatives, there is a
    smooth function $f$ with support in $\Omega$ and both $\Delta^{k}f(q)=\rho_{k}$ and
    $\partial_{n}\Delta^{k} f(q)=\sigma_{k}$ for all $k$.
    \end{theorem}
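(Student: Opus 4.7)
The plan is to imitate the classical Borel construction by building $f$ as a series of scaled auxiliary functions $f_{l,m}$ and $g_{l,m}$ defined just before the theorem. Specifically I will set
\begin{equation*}
    f = \sum_{l=0}^{\infty} \bigl( \rho_{l}\, g_{l,m_{l}} + \sigma_{l}\, f_{l,m_{l}} \bigr)
\end{equation*}
with the scales $m_{l}$ to be chosen. This ansatz is natural because of the pointwise orthogonality provided by \eqref{Borel-propertiesofflmandglm} at $q$: only the $l=k$ term of the $g$-series contributes to $\Delta^{k}f(q)$, and only the $l=k$ term of the $f$-series contributes to $\partial_{n}\Delta^{k}f(q)$, so formally the prescribed data are matched exactly.

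The real work is to choose each $m_{l}$ large enough to achieve three things simultaneously. First, $F_{i}^{m_{l}}(X)\subset\Omega$, which is possible because $F_{i}^{m}$ contracts to the fixed point $q=q_{i}$; taking all $m_{l}$ at least some common threshold $M$ will ensure the support is contained in $\Omega$. Second, the termwise application of every power $\Delta^{k}$ should yield a uniformly convergent series. Here the scaling estimates \eqref{Borel-scalingestforflm} and \eqref{Borel-scalingestforglm} do the work: for each fixed $l$ and every $k<l$ the norms $\|\Delta^{k}g_{l,m_{l}}\|_{\infty}$ and $\|\Delta^{k}f_{l,m_{l}}\|_{\infty}$ decay geometrically in $m_{l}$, so I may pick $m_{l}$ large enough that
\begin{equation*}
    |\rho_{l}|\,\|\Delta^{k}g_{l,m_{l}}\|_{\infty} + |\sigma_{l}|\,\|\Delta^{k}f_{l,m_{l}}\|_{\infty} \leq 2^{-l} \quad\text{for every $k=0,1,\dots,l-1$,}
\end{equation*}
which is a finite list of conditions for each $l$. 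Consequently, for any fixed $k$ the tail $\sum_{l>k}$ of the termwise $\Delta^{k}$-series is dominated by $\sum_{l>k} 2^{-l}<\infty$, while the initial block $l\leq k$ is a finite sum of continuous functions; uniform convergence follows. The pointwise values $\Delta^{k}f(q)=\rho_{k}$ and $\partial_{n}\Delta^{k}f(q)=\sigma_{k}$ then follow from \eqref{Borel-propertiesofflmandglm}, and the support is contained in $\bigcup_{l}F_{i}^{m_{l}}(X)\subseteq\Omega$.

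The main obstacle is promoting uniform convergence $\Delta^{k}S_{N}\to h_{k}$ of the partial sums to the statement that $\Delta^{k}f=h_{k}$ in the strong sense used to define $\dom(\Delta^{\infty})$, which is not automatic on a fractal. The cleanest route is via the Green's operator: every $f_{l,m}$ and $g_{l,m}$ with $l\geq 1$ vanishes on $V_{0}$, while $g_{0,m}$ takes the value $1$ at $q$ and $0$ at the other points of $V_{0}$, so each partial sum $S_{N}$ differs from $\rho_{0}$ times the harmonic function with boundary data $\delta_{q}$ by exactly $-G(\Delta S_{N})$. Continuity of $G$ on $C(X)$ then identifies $\Delta f$ as the uniform limit of $\Delta S_{N}$, and an induction on $k$ delivers the full conclusion. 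The matching conditions needed to view each extended-by-zero $f_{l,m}$ and $g_{l,m}$ as a smooth function on $X$ are immediate from Lemma \ref{Borel-existgencepffunctionsfl}, since $\Delta^{j}f_{l}$ and $\partial_{n}\Delta^{j}f_{l}$ all vanish at the boundary points of $X$ other than $q$.
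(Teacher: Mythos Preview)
Your argument is correct and follows essentially the same route as the paper: the same series ansatz, the same use of the scaling estimates \eqref{Borel-scalingestforflm}--\eqref{Borel-scalingestforglm} to force uniform convergence of every $\Delta^{k}$, and the same appeal to \eqref{Borel-propertiesofflmandglm} to read off the jet. The only cosmetic difference is that the paper allows separate scales $m_{l}$ and $n_{l}$ for the $g$- and $f$-terms, whereas you use a common $m_{l}$; this changes nothing. Your final paragraph, which uses the Dirichlet Green's operator to pass from uniform convergence of $\Delta^{k}S_{N}$ to $f\in\dom(\Delta^{\infty})$, is actually more careful than the paper, which simply asserts that step.
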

\begin{proof}
We give the usual proof that it is possible to define $f$\/ by the series
\begin{equation}\label{Borel-defnofborelfnasseries}
    f = \sum_{l} \biggl( \rho_{l}g_{l,m_{l}} + \sigma_{l} f_{l,n_{l}} \biggr)
    \end{equation}
for an appropriate choice of  $m_{l}$ and $n_{l}$.

Let $m_{0}=n_{0}$ be sufficiently large that $F_{i}^{m_{0}}(X)\subset\Omega$.  For each $l$ choose
$m_{l}\geq m_{0}$ so large that
\begin{equation*}
    \bigl\|\rho_{l} \Delta^{k} g_{l,m_{l}} \bigr\|_{\infty} \leq 2^{k-l-1} \quad\text{for
    $0\leq k\leq l-1$}
    \end{equation*}
using the scaling estimate \eqref{Borel-scalingestforglm}.  Similarly use the scaling relation
\eqref{Borel-scalingestforflm} to take $n_{l}\geq m_{0}$ such that
\begin{equation*}
    \bigl\|\sigma_{l} \Delta^{k} f_{l,n_{l}} \bigr\|_{\infty} \leq 2^{k-l-1} \quad\text{for
    $0\leq k\leq l-1$}
    \end{equation*}
Then for fixed $k$ we have
\begin{equation*}
    \left\| \sum_{l=k+1}^{\infty} \Delta^{k} \Bigl( \rho_{l}g_{l,m_{l}} + \sigma_{l} f_{l,n_{l}}
    \Bigr) \right\|_{\infty} \leq \sum_{k+1}^{\infty} 2^{k-l} \leq 1
     \end{equation*}
so that all powers of the Laplacian applied to \eqref{Borel-defnofborelfnasseries} produce
$L^{\infty}$ convergent series.  It follows that $f$ as defined in
\eqref{Borel-defnofborelfnasseries} is smooth and has support in $\Omega$.  By
\eqref{Borel-propertiesofflmandglm} it has the desired jet, so the result follows.
\end{proof}

We remark that for any $\epsilon>0$ we could replace the bounds $2^{k-l-1}$ in the proof with
$\epsilon 2^{k-l-1}$. It follows that we can define $f$ by \eqref{Borel-defnofborelfnasseries} and
have the estimate
\begin{equation}\label{Borel-usefulestimatefordistributiontheory}
    \bigl\| \Delta^{k} f \bigr\|_{\infty} \leq C(k,\Omega) \sum_{l=0}^{k} \bigl( |\rho_{l}|+|\sigma_{l}| \bigr)
    +\epsilon
    \end{equation}
where $C(k,\Omega)$ does not depend on the jet we prescribe.

It is also possible to estimate the effect of the size of the support $F_{i}^{m_{0}}(X)$ on a
finite collection of jet terms. Take $m_{l}=m_{0}$ for $0\leq j\leq L$, $n_{l}=m_{0}$ for $0\leq
j\leq L-1$ and thereafter choose both so large that the estimates $\epsilon 2^{k-l-1}$ hold. From
\eqref{Borel-scalingestforflm} and \eqref{Borel-scalingestforglm} we deduce for $0\leq k\leq L$,
\begin{equation}\label{Borel-usefulestimatefordistributiontheorytwo}
    \|\Delta^{k}f\|_{\infty}
    \leq C(k) (r_{i}\mu_{i})^{-m_{0}k} \biggl( \sum_{l=0}^{L} (r_{i}\mu_{i})^{m_{0}l} |\rho_{l}| + \sum_{l=0}^{L-1}
    r_{i}^{m_{0}(l+1)}\mu_{i}^{m_{0}l} |\sigma_{l}| \biggr) +\epsilon.
    \end{equation}
Similar estimates hold for other norms and seminorms, provided only that their scaling is
understood.  For example, if we consider the seminorm $\DF(\Delta^{k}f)^{1/2}$, then the scaling in
\eqref{Borel-scalingestforflm} and \eqref{Borel-scalingestforglm} is reduced by $r_{i}^{-m/2}$, so
the above construction would give
\begin{equation*}
    \DF(\Delta^{k}f)^{1/2}
    \leq C(k) r_{i}^{-m_{0}(k+1/2)} \mu_{i}^{-m_{0}k} \biggl( \sum_{l=0}^{L} (r_{i}\mu_{i})^{m_{0}l} |\rho_{l}| + \sum_{l=0}^{L-1}
    r_{i}^{m_{0}(l+1)}\mu_{i}^{m_{0}l} |\sigma_{l}| \biggr) +\epsilon.
    \end{equation*}

The result of the theorem may be transferred to any junction point $F_{w}(q)$ and cell $F_{w}(X)$
in $X$, simply by modifying the desired jet to account for the effect of composition with $F_{w}$,
solving for $f$ on $X$, and defining the new function to be $f\circ F_{w}^{-1}$ on the cell.  We
record a version of this that will be useful later; note that in the following we use the notation
$\partial_{n}$ for the normal derivative with respect to the cell $F_{w}(X)$.
\begin{corollary}\label{Borel-borelthmatjunctionpt}
    Let $F_{w}(q)$ be a junction point in $X$.  Given a jet $(\rho_{0},\rho_{1},\dotsc)$,
    $(\sigma_{1},\sigma_{2},\dotsc)$ there is a smooth function $f$ on $F_{w}(X)$ that has
    $\Delta^{k}f(p)=\partial_{n}\Delta^{k}f(p)=0$ at all points $p\in\partial F_{w}(X)$
    such that $p\neq F_{w}(q)$, and satisfies $\Delta^{k}f(F_{w}(q))=\rho_{k}$ and
    $\partial_{n}\Delta^{k}f(F_{w}(q))=\sigma_{k}$ for all $k$.
    \end{corollary}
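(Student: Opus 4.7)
The plan is to reduce the corollary to Theorem~\ref{Borel-borelthm} by pulling the problem back from the cell $F_{w}(X)$ to $X$ via the similarity $F_{w}$. I would start by recording the relevant scaling identities: if $\tilde{f}=f\circ F_{w}$, then under the self-similar Dirichlet form and measure we have $\Delta^{k}\tilde{f}=(\mu_{w}r_{w})^{k}(\Delta^{k}f)\circ F_{w}$ and $\partial_{n}\Delta^{k}\tilde{f}(q)=r_{w}^{k+1}\mu_{w}^{k}\,\partial_{n,F_{w}(X)}\Delta^{k}f(F_{w}(q))$, exactly as discussed in the paragraph preceding \eqref{Borel-propertiesofflmandglm} for the single-letter case (and iterated here for a word~$w$). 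These give a linear bijection between jets at $F_{w}(q)$ and jets at $q$.

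Next, I would transfer the prescribed data: set $\tilde{\rho}_{k}=(\mu_{w}r_{w})^{k}\rho_{k}$ and $\tilde{\sigma}_{k}=r_{w}^{k+1}\mu_{w}^{k}\sigma_{k}$. Choose an open neighborhood $\Omega\subset X$ of $q$ small enough that $\overline{\Omega}$ contains no other boundary point of $X$; such $\Omega$ exists because $V_{0}$ is finite. Applying Theorem~\ref{Borel-borelthm} at the boundary point $q\in V_{0}$ with data $(\tilde{\rho}_{k}),(\tilde{\sigma}_{k})$ and neighborhood $\Omega$ produces a smooth function $\tilde{f}$ on $X$, supported in $\Omega$, with $\Delta^{k}\tilde{f}(q)=\tilde{\rho}_{k}$ and $\partial_{n}\Delta^{k}\tilde{f}(q)=\tilde{\sigma}_{k}$ for all~$k$. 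Because $\sppt(\tilde{f})\subset\Omega$ avoids every $p\in V_{0}\setminus\{q\}$, the function $\tilde{f}$ vanishes identically in a neighborhood of each such~$p$, so $\Delta^{k}\tilde{f}(p)=\partial_{n}\Delta^{k}\tilde{f}(p)=0$ there.

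Finally I would define $f=\tilde{f}\circ F_{w}^{-1}$ on $F_{w}(X)$. Since $F_{w}\colon X\to F_{w}(X)$ is a bijective similarity and $\tilde{f}\in\dom(\Delta^{\infty})$, the scaling identities above make $f$ smooth on $F_{w}(X)$ and convert the jet data at $q$ back into the prescribed jet at $F_{w}(q)$. The same identities, applied at each of the other boundary points $F_{w}(p)$, give $\Delta^{k}f(F_{w}(p))=\partial_{n,F_{w}(X)}\Delta^{k}f(F_{w}(p))=0$, as required. There is no serious obstacle here: the only thing that could go wrong is bookkeeping of the scaling exponents, so the main care in writing this up is to state \eqref{Borel-scalingestforflm}--\eqref{Borel-scalingestforglm} for a general word $w$ and apply them consistently; everything else is immediate from Theorem~\ref{Borel-borelthm}.
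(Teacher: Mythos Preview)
Your proposal is correct and follows essentially the same route the paper sketches in the paragraph immediately preceding the corollary: rescale the prescribed jet by the factors $(\mu_{w}r_{w})^{k}$ and $\mu_{w}^{k}r_{w}^{k+1}$, apply Theorem~\ref{Borel-borelthm} at $q$, and push forward via $F_{w}^{-1}$. Your added observation that one may take $\Omega$ disjoint from $V_{0}\setminus\{q\}$ to force the vanishing at the other boundary points is exactly the right way to make the paper's terse ``solving for $f$ on $X$'' precise.
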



\section{Additive Partitions of Functions}\label{Partition-section}

The results of \cite{MR1707752} show that multiplication is not generally a good operation on
functions in $\dom(\Delta)$.  In particular, for $X$ a p.c.f. fractal with self-similar measure and
regular self-similar harmonic structure it is generically the case that if $u\in\dom(\Delta)$ then
$u^{2}\not\in\dom(\Delta)$.  In such a situation there is no hope of using a smooth partition of
unity to localize problems in the classical manner.  Instead we provide a simple method for making
a smooth decomposition of $f\in\dom(\Delta^{\infty})$ using Theorem \ref{Borel-borelthm}.
Throughout this section we make the same assumptions on $X$ as were made in
Section~\ref{Borel_section}.
\begin{theorem}\label{Partition-partitionthm}
Let $\cup_{\alpha} \Omega_{\alpha}$ be an open cover of $X$ and $f\in\dom(\Delta^{\infty})$.  There
is a decomposition $f=\sum_{k=1}^{K}f_{k}$ in which each $k$ has a corresponding $\alpha_{k}$ such
that $f_{k}$ is smooth on $X$ and supported in $\Omega_{\alpha_{k}}$.
\end{theorem}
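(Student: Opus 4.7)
The plan is to approximate the open cover by a cell decomposition, then use Corollary \ref{Borel-borelthmatjunctionpt} to build smooth ``junction correctors'' that absorb the jet of $f$ at each interior junction point of the decomposition, and finally split the remainder cell-by-cell.

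First, I would fix a sufficiently fine cell decomposition. By compactness of $X$ and the Lebesgue number lemma, there is $\delta>0$ such that every subset of $X$ of diameter less than $\delta$ lies in some $\Omega_\alpha$. Since $m$-cells shrink uniformly to diameter zero, I can take $m$ so large that every $m$-cell $C_k=F_{w_k}(X)$ lies in some $\Omega_{\alpha(k)}$ and, moreover, for each interior junction point $p\in V_m\setminus V_0$ the union of all $m$-cells having $p$ as a boundary vertex lies in some (possibly different) $\Omega_{\alpha(p)}$. Let $J=V_m\setminus V_0$ and enumerate the $m$-cells $C_1,\dotsc,C_K$. Note $V_0$-points of $X$ lie in only one $m$-cell, so $J$ is precisely the set of points where two or more cells meet.

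Next, for each $p\in J$ and each cell $C_k$ with $p\in\partial C_k$, I would apply Corollary \ref{Borel-borelthmatjunctionpt} inside $C_k$ to obtain a smooth function $b_{p,k}$ on $C_k$ whose jet at $p$ (the values $\Delta^{j}$ and $\partial_{n,C_k}\Delta^{j}$ for all $j\geq 0$) agrees with that of $f|_{C_k}$, and whose jets at all other points of $\partial C_k$ vanish. Then define $\eta_p$ on $X$ by setting $\eta_p=b_{p,k}$ on each cell $C_k$ containing $p$ and $\eta_p\equiv 0$ on the remaining cells; the definition is consistent at $p$ itself because each $b_{p,k}(p)=f(p)$. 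Verifying $\eta_p\in\dom(\Delta^\infty)$ is the main technical point of the argument. Smoothness at every $p'\in J$ with $p'\neq p$ is essentially automatic: on cells meeting $p$ the relevant $b_{p,k}$ has zero jet at $p'$, and on cells not meeting $p$ the function $\eta_p$ is identically zero. Smoothness at $p$ itself is the heart of the matter — it holds because $\Delta^{j}\eta_p(p)=\Delta^{j}f(p)$ from every adjacent cell (a unique limit), while the normal-derivative matching condition $\sum_{C_k\ni p}\partial_{n,C_k}\Delta^{j}\eta_p(p) = \sum_{C_k\ni p}\partial_{n,C_k}\Delta^{j}f(p)=0$ is inherited from the fact that $f$ itself is already smooth. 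Thus $\eta_p$ is supported in $\Omega_{\alpha(p)}$, carries the jet of $f$ at $p$, and has vanishing jet at every other point of $J$.

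Finally, set $g=f-\sum_{p\in J}\eta_p$. Then $g\in\dom(\Delta^\infty)$ and at every $p\in J$ both $\Delta^{j}g(p)=0$ and $\partial_{n,C_k}\Delta^{j}g(p)=0$ for each adjacent cell $C_k$. Putting $g_k = g\cdot\chi_{C_k}$ extended by zero, the matching conditions on $\partial C_k\cap J$ are now trivially satisfied, so $g_k\in\dom(\Delta^\infty)$ and $g_k$ is supported in $C_k\subset\Omega_{\alpha(k)}$. The identity
\[
f = \sum_{k=1}^{K} g_k + \sum_{p\in J}\eta_p
\]
is the required finite decomposition into smooth functions, each supported in some element of the open cover; after relabelling this gives the form claimed in the theorem. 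The main obstacle, as indicated, is the smoothness check for $\eta_p$ at $p$, which succeeds precisely because Corollary \ref{Borel-borelthmatjunctionpt} supplies the correct cell-by-cell jet values and the matching conditions for $f$ force the required normal-derivative sum to vanish.
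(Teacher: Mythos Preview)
Your argument is correct and is a genuinely different route from the paper's proof. The paper proceeds by induction on a finite subcover $\Omega_1,\dotsc,\Omega_K$: at step $k$ it covers the part of the remainder $g_{k-1}$ not already handled by $\Omega_{k+1},\dotsc,\Omega_K$ with a finite union $\Lambda_k$ of cells inside $\Omega_k$, then uses Corollary~\ref{Borel-borelthmatjunctionpt} only at the finitely many boundary points of $\Lambda_k$ to attach smooth ``tails'' in small neighboring cells, so that $f_k=g_{k-1}|_{\Lambda_k}+\text{tails}$ is globally smooth and supported in $\Omega_k$. By contrast you fix a single fine scale $m$, build a junction corrector $\eta_p$ at every point of $V_m\setminus V_0$, subtract them all at once, and then split the residue cell by cell. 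Both arguments rest on the same key input (Corollary~\ref{Borel-borelthmatjunctionpt} and the matching conditions), and your verification that $\sum_{C_k\ni p}\partial_{n,C_k}\Delta^j\eta_p(p)=0$ by borrowing the matching condition from $f$ is exactly the right point. Your approach is more symmetric and arguably cleaner; the paper's inductive peeling produces fewer summands (at most one per set in the subcover rather than one per $m$-cell plus one per junction point) and adapts the cell sizes to the geometry of the cover, which matters for the quantitative estimates used elsewhere in the paper but not for the bare statement here.
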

\begin{proof}
Compactness of $X$ allows us to reduce to the case of a finite cover
$\cup_{1}^{K}\Omega_{\alpha_{k}}$ for which there is no sub-collection that covers $X$.  We write
$\Omega_{k}$ for $\Omega_{\alpha_{k}}$, and construct the functions $f_{k}$ inductively.  At the
$k$-th stage we suppose there are functions $f_{1},\dotsc,f_{k-1}$ with the properties asserted in
the lemma, and that function $g_{k-1}= f-\sum_{l=1}^{k-1}f_{l}$ is smooth on $X$ and vanishes
identically on a neighborhood $\Pi_{k-1}$ of $X\setminus\Bigl(\cup_{j=k}^{K}\Omega_{j}\Bigr)$. In
the base case $k=1$ this assumption is trivial, and it is clear that the theorem follows
immediately from the induction.  We have therefore reduced to the case where our cover consists of
the two sets $\Omega_{k}$ and $\tilde{\Omega}_{k}=\cup_{j=k+1}^{K}\Omega_{j}$, because the
induction is complete once we have $f_{k}$ as in the lemma such that $g_{k}$ is identically zero on
a neighborhood $\Pi_{k}$ of $X\setminus\tilde{\Omega}_{k}$.

For a scale $m$ and $x\in X$, define the $m$-scale open neighborhood of $x\,$ to be the interior of
the unique $m$-cell containing $x$ if $x\not\in V_{m}$, and to be the union
$\{x\}\cup\bigl(\cup_{w} F_{w}(X)\setminus \partial F_{w}(X)\bigr)$ if $x=F_{w}(q_{i})$ is a
junction point. By
 Section 1.3 of \cite{Kigamibook}, the $m$-scale open neighborhoods form a fundamental system of open
neighborhoods of $x$. At each $x$ in $\Omega_{k}$ there is a largest $m$ such that the $m$-scale
neighborhood of $x$ is contained in $\Omega_{k}$. The collection of all such largest neighborhoods
of points of $\Omega_{k}$ is an open cover of the compact set
$\sppt(g_{k-1})\setminus\tilde{\Omega}_{k}$.  We use $\Lambda_{k}$ to denote the union over a
finite subcover.

Clearly $\Lambda_{k}$ has finitely many boundary points. Let those boundary points that are also in
$\tilde{\Omega}_{k}$ be $x_{1},\dotsc,x_{J}$, and take at each a finite collection of cells
$\{C_{i,j}\}_{i=1}^{I_{j}}$ having $x_{j}$ in their boundary.  We require that
$\Lambda_{k}\cup\Bigl(\cup_{i=1}^{I_{j}} C_{i,j}\Bigr)$ contains a neighborhood of $x_{j}$, that
all of the $C_{i,j}$ lie entirely within $\Omega_{k}$ and none intersect $\Lambda_{k}$, and that
$C_{i,j}\cap C_{i^{\prime},j^{\prime}}$ is empty unless $j=j^{\prime}$, in which case it contains
only $x_{j}$. On each cell we apply Corollary \ref{Borel-borelthmatjunctionpt} to find functions
$h_{i,j}$ that match $g_{k-1}(x_{j})$ and all powers of its Laplacian at $x_{j}$, and such that the
sum $\sum_{i}h_{i,j}$ has normal derivatives that cancel $\partial_{n}\Delta^{n}g_{k-1}(x_{j})$ at
$x_{j}$ for all $n$.  Thus $\sum_{i=1}^{I_{j}}h_{i,j}$ matches $g_{k-1}$ in the sense of the
matching condition and vanishes to infinite order at the other boundary points of
$\cup_{i=1}^{I_{j}}C_{i,j}$.  The matching condition implies that $f_{k}=g_{k-1}|_{\Lambda_{k}}
+\sum_{j}\sum_{i} h_{i,j}$ is smooth. It is clearly supported on $\Omega_{k}$ and equal to $f$ on
the closure $\overline{\Lambda}_{k}$ of $\Lambda_{k}$, so $g_{k}=g_{k-1}-f_{k}$ is zero on a
neighborhood $\Pi_{k}$ of $X\setminus \tilde{\Omega}_{k}$, which completes the induction and the
proof.
\end{proof}

\begin{remark}
Theorem~\ref{Partition-partitionthm} is stronger than the partitioning results we obtained directly
from the heat kernel smoothing procedure of Section~\ref{Heatbump-section}, but only applies to the
more limited case of p.c.f. self-similar sets with regular Dirichlet form.  We note, however, that
it provides a stronger version of Theorem~\ref{FixedPoint-smoothbumpexiststheorem}, because by
smoothly cutting off the constant function $1$ we obtain a smooth bump that is identically $1$ on a
compact $K$ and $0$ outside an $\epsilon$-neighborhood of $K$.  In contrast to
Corollary~\ref{Heatbump-heatbumptheorem} it does not imply that there is a positive smooth bump of
this type.
\end{remark}

\def\cprime{$'$}
\providecommand{\bysame}{\leavevmode\hbox to3em{\hrulefill}\thinspace}
\providecommand{\MR}{\relax\ifhmode\unskip\space\fi MR }
\providecommand{\MRhref}[2]{%
  \href{http://www.ams.org/mathscinet-getitem?mr=#1}{#2}
}
\providecommand{\href}[2]{#2}

\end{document}